\documentclass[12pt]{amsart}
\usepackage{mathrsfs}
\usepackage{mathtools}
\usepackage{dsfont}
\usepackage{amsmath,amssymb,amsthm,upref,graphicx,mathrsfs}
\usepackage{enumerate}

\usepackage{bbm}

\usepackage{color}
\usepackage[
  colorlinks=true,
  linkcolor=blue,
  citecolor=blue,
  urlcolor=blue]{hyperref}

\usepackage{appendix}

\numberwithin{equation}{section}

\newtheorem{theorem}{Theorem}[section]
\newtheorem{proposition}[theorem]{Proposition}
\newtheorem{lemma}[theorem]{Lemma}

\theoremstyle{definition}

\newtheorem{construction}[theorem]{Construction}
\newtheorem{remark}[theorem]{Remark}

\newcommand{\M}{\mathcal{M}}
\newcommand{\1}{\mathbf{1}}
\newcommand{\Ra}{\mathcal{R}}

\newcommand{\E}{\mathbb{E}}
\newcommand{\R}{\mathbb{R}}

\newcommand{\supp}{{\rm supp}\,}
\newcommand{\be}{\begin{eqnarray*}}
\newcommand{\ee}{\end{eqnarray*}}
\newcommand{\beq}{\begin{equation}}
\newcommand{\eeq}{\end{equation}}

\newcommand{\N}{\mathcal{N}}

\begin{document}

\title[Noncommutative unbounded Vilenkin-Fourier series]
{The weak type $(1,1)$ estimate of Dirichlet Means for unbounded noncommutative Vilenkin systems}

\author[Hong]{Guixiang Hong}
\address{Institute for Advanced Study in Mathematics, Harbin Institute of Technology, Harbin 150001, China}
\email{gxhong@hit.edu.cn}

\author[Zhao]{Tiantian Zhao}
\address{Institute for Advanced Study in Mathematics, Harbin Institute of Technology, Harbin 150001, China}
\email{zhaotiantian@hit.edu.cn}




\subjclass[2020]{Primary 46L52; Secondary 42B20, 46L53}
\keywords{noncommutative Vilenkin system;   Vilenkin-Fourier series;  noncommutative $L_{p}$-spaces; noncommutative Calder\'{o}n-Zygmund decomposition}

%
%
\begin{abstract}
Let $\mathcal{R}$ be the hyperfinite $\mathrm{II}_1$ factor. Considering the partial sum operators $(\mathcal{S}_n)_{n\geq 1}$ of the noncommutative Vilenkin-Fourier series associated with an arbitrary admissible Vilenkin group, we prove that  there exists a universal  constant $c>0$ such that
\begin{equation*}	
	\sup_{n\geq1}\|\mathcal{S}_n(f)\|_{L_{1,\infty}(\mathcal{R})} \leq c\|f\|_{L_1(\mathcal{R})},\quad f \in L_1(\mathcal{R}), 
\end{equation*}
and, for every $1<p<\infty$,
$$\sup_{n\geq1}\|\mathcal{S}_n(f)\|_{L_p(\mathcal{R})} \leq c\frac{p}{p-1}\|f\|_{L_p(\mathcal{R})},\quad f \in L_p(\mathcal{R}).$$ 
Besides the transference technique, the main novel ingredient is a  modified version of noncommutative Calder\'{o}n-Zygmund decomposition  established in \cite{CCP2022}. Consequently, we resolve the problem of weak  type  $(1,1)$ estimate  communicated to the authors by Fedor Sukochev, and substantially improve  the strong type (p,p) estimates obtained in \cite{DFdePS2001} by achieving the optimal order $\frac{p}{p-1}$.
\end{abstract}

\maketitle

%
%

 \section{Introduction}
 Let $\mathbf{m}=(m_k)_{k\ge0}$ be a sequence of integers with $m_k\ge2$, and let  $G_{\mathbf{m}}=\prod_{k=0}^\infty \mathbb Z_{m_k}$ be the associated Vilenkin group equipped with the Haar probability measure.
 The Vilenkin system  $\{\phi_k\}_{k\geq0}$ (see Section \ref{sec2.4}) forms an orthonormal basis of $L_2(G_{\mathbf{m}})$. For $f\in L_1(G_{\mathbf{m}})$, the $n$-th partial sum of the Vilenkin-Fourier series is given by
 \[
 S_nf=\sum_{k=0}^{n-1}\widehat f(k)\phi_k,\quad n\geq1,
 \]
 where $\widehat f(k)$ is the $k$-th Fourier coefficient of $f$. A central theme in classical theory is the uniform boundedness of the family
 $\{S_n\}_{n\ge1}$, especially at the endpoint level.  Such bounds are intimately connected
 to mean convergence properties of Vilenkin-Fourier series. A decisive breakthrough was achieved by Young \cite{Yo1976}, who proved that,  on all commutative Vilenkin groups (in particular, allowing unbounded, i.e. $\sup_{k\geq0} m_k=\infty$), the sequence
 of partial sums of the Vilenkin-Fourier series is a uniformly bounded family of weak
 type $(1,1)$ operators and strong type $(p,p)$ operators for all $1<p<\infty$.
 Young's result is commonly viewed  as a strengthening of  earlier $L_p$ convergence results for bounded Vilenkin systems, such as those of Watari \cite{Wa1958} and Gosselin \cite{Go1973}. Moreover, it marks a genuine breakthrough from the bounded theory to the unbounded setting. For these reasons, it has since become a cornerstone of commutative Vilenkin analysis.  We refer to \cite{On1971,On1972,Pa1932,We2002,Wa1964} and the references
 therein for further developments in harmonic analysis on Vilenkin (in particular dyadic) groups. 
 

The emergence of noncommutative harmonic analysis as a broad and influential research area owes much to the foundational work of  Junge, Pisier,  and Xu \cite{Pi1997,Ju2002, JX2007} on noncommutative martingale and ergodic inequalities around the end of the last century, and one can find earlier works on the hyperfinite $\mathrm{II}_1$ factor for instance in \cite{SF1994,SF1995} by Sukochev et al..  
Chen, Xu, and Yin \cite{CXY2013} initiated harmonic analysis on quantum tori by establishing maximal inequalities and pointwise convergence results for several summation methods, identifying the completely bounded $L_p$-Fourier multipliers on quantum tori, and developing the corresponding Hardy space theory, building on Mei's operator-valued Hardy spaces \cite{Me2007}. Subsequently, Junge, Mei, and Parcet \cite{JMP2014} developed a H\"ormander-Mikhlin theory for Fourier multipliers on group von Neumann algebras associated with arbitrary discrete groups, while their later work \cite{JMP2018} provided dimension-free bounds for noncommutative Riesz transforms and further multiplier results.
More recently, Parcet, Ricard, and de la Salle \cite{PRD2022} (see also \cite{CGPT25}) established a H\"ormander-Mikhlin criterion on von Neumann algebras generated by higher-rank semisimple Lie groups.
In a related direction, Conde-Alonso, Gonz\'{a}lez-P\'{e}rez, Parcet and Tablate \cite{CGPT2023} found a  H\"ormander-Mikhlin-Schur multiplier theorem on Schatten-von Neumann classes.  Recently, Hong, Wang and Wang \cite{HWW2024} established two criteria for noncommutative maximal inequalities associated with Fourier multipliers, and applied them to the study of pointwise convergence of  noncommutative Fourier series.

The purpose of the present paper is to establish an analogue of Young's theorem in the framework of  separable hyperfinite  $\mathrm{II}_1$ factor $\mathcal R$ associated with any admissible Vilenkin group. 
It is worth stressing that, with the exception of \cite{CXY2013}, the aforementioned results are not concerned with endpoint estimates,  and more precisely, do not provide weak type $(1,1)$ bounds. 
In contrast, the main contribution of the present paper is to establish the following weak type $(1,1)$ and  strong type $(p,p)$ estimates for partial sum operators associated with noncommutative Vilenkin systems.

  Let  $\mathcal{R}$ be the hyperfinite $\mathrm{II}_1$ factor associated with an admissible Vilenkin group, and let $(\mathcal{S}_n)_{n\geq1}$  be the partial sum operators of noncommutative Vilenkin-Fourier series. For such $\mathcal{R}$, there exists a canonical normal finite faithful trace $\tau$, and the resulting $L_p$ spaces and weak $L_p$ spaces are denoted by $L_p(\mathcal{R})$ and $L_{p,\infty}(\mathcal{R})$,  respectively. We refer the reader to Sections  \ref{sec-2} and \ref{sec5}  for more information.
\begin{theorem}\label{Npb}
	There exists a universal  constant $c>0$  such that
	\begin{equation}\label{nc-w11}
		\sup_{n\geq1}\|\mathcal{S}_n(f)\|_{L_{1,\infty}(\mathcal{R})} \leq c \|f\|_{L_1(\mathcal{R})},\quad f\in L_1(\mathcal{R})
	\end{equation}
	and, for every $1<p<\infty$,
	\begin{equation}\label{nc-spp}
		\sup_{n\geq1}\|\mathcal{S}_n(f)\|_{L_p(\mathcal{R})} \leq c\frac{p}{p-1}\|f\|_{L_p(\mathcal{R})},\quad  f\in L_p(\mathcal{R}).
	\end{equation}
\end{theorem}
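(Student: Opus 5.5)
We plan to reduce Theorem~\ref{Npb} to an operator-valued (semicommutative) statement on the Vilenkin group by transference, and then to prove that statement with a noncommutative Calder\'on--Zygmund decomposition adapted to the Vilenkin filtration. Write $\mathcal R$ as the crossed-product-type realization generated by the noncommutative Vilenkin system $(w_k)$. The trace-preserving $*$-homomorphism $\pi:\mathcal R\to L_\infty(G_{\mathbf m})\bar\otimes\mathcal R$, $\pi(w_k)=\phi_k\otimes w_k$, intertwines the partial sums: $\pi\circ\mathcal S_n=(S_n\otimes \mathrm{id})\circ\pi$. Since $\pi$ extends to an isometry on $L_1$, $L_p$ and $L_{1,\infty}$, it suffices to prove
\[
\sup_n\|(S_n\otimes\mathrm{id})F\|_{L_{1,\infty}(L_\infty(G_{\mathbf m})\bar\otimes\mathcal R)}\le c\|F\|_{L_1},
\]
together with the $L_p$ analogue, where $\mathcal R$ is now an arbitrary semifinite coefficient algebra. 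The $L_2$ bound is trivial because $S_n$ is an orthogonal projection. Given weak $(1,1)$, Marcinkiewicz interpolation yields $L_p$ bounds of order $\frac{1}{p-1}$ for $1<p\le2$. Since $S_n$ is self-adjoint, duality then gives the bound for $p>2$ with order $p$. Together these give the constant $c\frac{p}{p-1}$.

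For the weak $(1,1)$ bound we follow Young's classical proof. Expand $n=\sum_k a_k M_k$ with $M_k=m_0\cdots m_{k-1}$ and $0\le a_k<m_k$. The Dirichlet kernel then takes the form
\[
D_n=\sum_k D_{M_k}\,\psi_k\sum_{j<a_k}r_k^{j},
\]
where $r_k$ is the $k$-th generalized Rademacher function and $\psi_k$ is a unimodular character that is constant on the atoms of the next generation. Note that $D_{M_k}*F=\E_k F$ is the conditional expectation onto the $\sigma$-algebra $\mathcal F_k$ of cosets of the $k$-th subgroup $G_k$. The sum $\sum_{j<a_k}r_k^j$ is a modulated Dirichlet kernel on the cyclic group $\mathbb Z_{m_k}$. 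Because $m_k$ may be unbounded, this piece does \emph{not} have a uniformly bounded $L_1$ norm (it grows like $\log m_k$). This is exactly where the bounded theory fails.

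Next we apply the modified noncommutative Calder\'on--Zygmund decomposition of \cite{CCP2022} at height $\lambda$ relative to the filtration $(\mathcal F_k)$. The decomposition gives a projection $\zeta$ with $\lambda\,\varphi(\1-\zeta)\lesssim\|F\|_1$ and $F=g+b_{\rm d}+b_{\rm off}$. The good part satisfies $\|g\|_2^2\lesssim\lambda\|F\|_1$, and it is handled by the $L_2$ bound. For the bad parts, the modification needed for unbounded $m_k$ is that the atoms must be refined. An $\mathcal F_{k+1}$-atom $Q$ sits inside an $\mathcal F_k$-atom $\widehat Q$ containing $m_k$ children, and we intend to use the \emph{one-dimensional} Calder\'on--Zygmund structure of $\mathbb Z_{m_k}$ inside $\widehat Q$, viewed as a circle. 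Concretely, we decompose at the level of intervals of consecutive children (``$\mathbb Z_{m_k}$-arcs''), so that each bad piece $b_Q$ is supported on an arc $I$ with mean zero there. The kernel $\sum_{j<a}r_k^j$ is a genuine Dirichlet kernel on $\mathbb Z_{m_k}\subset\mathbb T$, and it satisfies H\"ormander's smoothness condition relative to arcs after the modulation is removed. Concretely, $\sum_{j<a}e^{2\pi i jx/m}$ equals $e^{i\theta x}$ times a Hilbert-transform-like kernel $\sin(\pi a x/m)/\sin(\pi x/m)$. The modulation is handled as for the Carleson/Hilbert operator, by writing $S_n=M_{\psi}\,\mathcal H\,M_{\bar\psi}$-type combinations of conjugate Hilbert transforms on $\mathbb Z_{m_k}$. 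Each term then has a kernel $\frac{1}{\sin(\pi x/m)}$, which satisfies H\"ormander's condition with constants independent of $m_k$.

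The off-diagonal terms are then estimated in the standard way of Parcet's noncommutative CZ theory. Outside $\zeta$-dilated supports (a $5I$-enlargement of each arc $I$ inside $\widehat Q$), the H\"ormander condition gives an $L_1$ bound for $(\1-\zeta)$-complements. The diagonal part is controlled similarly, using the $L_1$ boundedness of $b_{\rm d}$.

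We expect the main obstacle to be this step: combining the martingale (dyadic-type) CZ decomposition across generations with a \emph{circular} CZ decomposition inside each cyclic factor $\mathbb Z_{m_k}$. This combination is the modified decomposition, and it must keep the noncommutative projections $\zeta$ compatible at both scales. In particular, one must verify that the off-diagonal term $b_{\rm off}$ of \cite{CCP2022}, which is not supported inside single atoms, can still be handled. We plan to estimate it through an $L_2$ pseudo-localization argument, using that $D_{M_k}$ acts as the conditional expectation $\E_k$. This gives $\|(\1-\zeta)S_n(b_{\rm off})(\1-\zeta)\|_2^2\lesssim\lambda\|F\|_1$, uniformly in $n$ and in $\mathbf m$.
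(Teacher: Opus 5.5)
There is a genuine gap. Your outer structure matches the paper: transference by a trace-preserving $*$-homomorphism, Young's splitting of the level-$k$ piece of $\widetilde S_n$ into averages and the cotangent operators $H_k$, a Calder\'on--Zygmund decomposition, then Marcinkiewicz plus duality. (The paper uses the doubled group $G_{\mathbf{2m}}$, because the system in $\bigotimes_k\mathbb M_{m_k}$ is indexed by pairs of digits; this difference is harmless.) But the two ingredients that make the unbounded case work are missing.

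\textbf{Regularity of the filtration.} Relative to $(\mathcal F_k)$, Cuculescu's construction only gives $q_{\widehat Q}f_{\widehat Q}q_{\widehat Q}\le\lambda$. Hence for a level-$(k+1)$ atom $Q$ you only get $p_Qf_Qp_Q\le m_k\lambda$, and this loss is real. Take $\mathcal M=\mathbb C$, $|Q|=1/m_0$ and $f=\frac{m_0\lambda}{2}\chi_Q$. Then the good part is $f$ itself and $\|g\|_2^2=\frac{m_0}{2}\lambda\|f\|_1$. So your claim $\|g\|_2^2\lesssim\lambda\|F\|_1$ needs the refinement into arcs to be an actual nested filtration with regularity constant independent of $\mathbf m$. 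The paper builds one by splitting every atom into two nearly equal halves, $\lceil\log_2 m_k\rceil$ times per level, which gives $R_{\mathrm{reg}}\le3$ (Lemma~\ref{regular}). It then reruns Cuculescu on this two-parameter filtration. You leave exactly this as ``the main obstacle.''

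\textbf{Cancellation of the bad pieces.} More seriously, mean zero of the bad pieces does not suffice. Let $Q\in F_{k+1,\ell}$ consist of $b\ge2$ consecutive level-$(k+1)$ children of a level-$k$ atom $I$. Off $3Q$, the only nonzero contribution of $b_Q$ comes from $\widetilde S_{\alpha_kM_k}$, which contains $r_k^{-\alpha_k}(x)H_k(b_Qr_k^{\alpha_k})(x)$.
\begin{itemize}
\item Since $r_k^{\alpha_k}$ is not constant on $Q$, in general $\int_Qb_Qr_k^{\alpha_k}\neq0$.
\item After subtracting the kernel at a point $t^Q\in Q$, you are left with $\frac{1}{2|I|}\cot(\pi(x_k-t_k^Q)/m_k)\int_Qb_Qr_k^{\alpha_k}$.
\item Its $L_1$ norm on $I\setminus3Q$ can be of order $\log(m_k/b)\,\|b_Q\|_1$. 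For example, take $\alpha_k=m_k/2$, $b=2$, and $b_Q$ of opposite signs on the two children.
\end{itemize}
So the modulated Dirichlet kernel of $\mathbb Z_{m_k}$ does not satisfy H\"ormander's condition uniformly in $\alpha_k$ and the arc length. ``Removing the modulation'' just moves it onto $b_Q$ and destroys the cancellation. Conjugating by a modulation works for a single modulated Hilbert transform. However, $\widetilde S_n$ is an infinite sum over $k$ with a different modulation $r_k^{\alpha_k}$ at each level. It is therefore not of the form $M_\psi\mathcal H M_{\bar\psi}$, and weak-type bounds cannot be summed in $k$.

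The paper's missing idea is to let the decomposition depend on $n$. On $Q\in F_{k+1,\ell}$ it replaces $f_Q$ by $A_Q(f)$, the orthogonal projection of $f|_Q$ onto $\mathrm{span}\{1,r_k^{-\alpha_k}\}$. Then every bad piece is orthogonal to both $1$ and $r_k^{\alpha_k}$ on $Q$. Lemma~\ref{KAQ} shows the kernel of this projection is bounded uniformly in $m_k$, $\alpha_k$ and $|Q|$, which keeps $\|g_{\mathrm d}\|_\infty\lesssim\lambda$. The price is an off-diagonal good part $g_{\mathrm{off}}$ that is controlled only in $L_2$.

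\textbf{The off-diagonal bad part.} Your plan for $b_{\mathrm{off}}$ targets the wrong term. Everything carrying a factor $\mathbf{1}-\zeta$ is trivial by $\lambda\varphi(\mathbf{1}-\zeta)\lesssim\|F\|_1$; what must be bounded is $\zeta\widetilde S_n(b_{\mathrm{off}})\zeta$. The paper obtains an $L_1$ bound for it, with no pseudo-localization:
\begin{itemize}
\item factor $\int_Qp_Qh\,q_Q\,r_k^{\alpha_k}\Delta_Q\,d\mu=B_1^{1/2}uB_2^{1/2}$ via Lemma~\ref{AcB}, with $\|B_2\|_\infty\le\lambda|Q|$;
\item apply the $L_2$ kernel bound of Lemma~\ref{cot}(ii);
\item finish with Cauchy--Schwarz over $Q$.
\end{itemize}
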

It was shown that the bounded Vilenkin  system forms a Schauder basis for finite von Neumann algebras in \cite{DS2000}, and this result was extended to unbounded Vilenkin groups in \cite{DFdePS2001}.  However, the corresponding bounds obtained in  \cite{DS2000,DFdePS2001} fail to attain the optimal order. Concerning  Walsh Fourier series, Jiao et al. \cite[Theorem 1.4]{JZWZ2018} proved that the sequence of partial sums forms a uniformly bounded family of weak type $(1,1)$ operators in the separable hyperfinite $\mathrm{II}_1$ factor $\mathcal{R}$.  More recently, extensions to the  noncommutative bounded Vilenkin setting have been established in \cite{SS2018,TZ2024}.  Theorem \ref{Npb} treats the case of arbitrary admissible Vilenkin groups in $\mathcal R$, and thus extends \cite{JZWZ2018,SS2018,TZ2024} from the Walsh setting to all (unbounded) Vilenkin setting. Moreover, Theorem \ref{Npb}  provides a strengthening over \cite{DFdePS2001}, by achieving the optimal order $\frac p{p-1}$. Based on the standard transference principle,  we reduce the proof of Theorem \ref{Npb} to the corresponding uniform estimates for the associated operator-valued case.  Let $\mathcal M$ be a von Neumann algebra equipped with a normal semifinite faithful trace $\tau$. Consider the tensor von Neumann algebra $\mathcal{N}=L_{\infty}(G_{\mathbf{m}})\bar{\otimes}\mathcal{M}$  equipped with the tensor trace $\varphi=\int \otimes \tau$. {Let $(S_n)_{n\geq1}$  be the partial sum operators of Vilenkin-Fourier series in $L_p(G_{\mathbf{m}})$, and the amplification operator $S_n\otimes \mathrm{id}_{L_p(\mathcal M)}$  is still denoted by $S_n$ without causing any confusion.}
\begin{theorem}\label{weak-main}
Let $G_{\mathbf{m}}$ be an arbitrary Vilenkin group. Then there exists a universal constant $c>0$ such that
\begin{equation}\label{ov-w11}
	\sup_{n\geq1}\|S_n(f)\|_{L_{1,\infty}(\mathcal{N})}\leq c\|f\|_{L_1(\mathcal{N})},\quad f\in L_1(\mathcal{N}),
\end{equation}
and, for every $1<p<\infty$,
\begin{equation}\label{ov-spp}
\sup_{n\geq1}\|S_n(f)\|_{L_p(\mathcal{N})}\leq c\frac{p}{p-1}\|f\|_{L_p(\mathcal{N})},\quad f\in L_p(\mathcal{N}).
\end{equation}
\end{theorem}
To prove Theorem~\ref{weak-main} (and hence Theorem~\ref{Npb}), a key obstruction arises from the fact that the noncommutative Calder\'{o}n-Zygmund decomposition  developed by Cadilhac, Conde-Alonso and Parcet \cite{CCP2022}  cannot  be applied  directly, since their decomposition is tailored to martingale filtrations with a finite regularity constant. More precisely, when the underlying Vilenkin group is unbounded (i.e.\ $\sup_{k\geq0} m_k=\infty$), the canonical Vilenkin filtrations is impossible to be regular, so the quantitative estimates in \cite{CCP2022} degenerate. In particular, 
one loses uniform control of the ``good'' part of the decomposition, most notably the estimate of $\|g\|_{L_\infty(\mathcal{N})}$, and hence also the crucial $L_2$ bounds needed for the endpoint weak type $(1,1)$ argument.  To overcome this difficulty, we first introduce in Subsection~\ref{fil}  a finer filtration adapted to an arbitrary Vilenkin group. Based on this filtration, we then establish in Subsection~\ref{3.2} a modified noncommutative Calder\'{o}n-Zygmund decomposition. This decomposition constitutes the key tool in our approach and may be of independent interest. Compared with the standard decomposition, the new feature is twofold. First, the decomposition produces an additional off-diagonal ``good'' part $g_{\mathrm{off}}$ besides the diagonal part $g_{\mathrm{d}}$, which is unavoidable for treating the unbounded Vilenkin geometry; moreover, we are unable to establish an $L_\infty$-estimate for $g_{\mathrm{off}}$, and instead we will  prove its $L_2$-estimate which is sufficient for the purposes. Second, the diagonal and off-diagonal bad parts $b_{\mathrm{d}}$ and $b_{\mathrm{off}}$ satisfy a new cancellation property: beyond the usual mean-zero condition on the relevant atoms, we obtain an extra cancellation against the generalized Rademacher functions (see Theorem~\ref{NewCZ}). This additional cancellation is precisely what allows the reduction to the remaining $H_k$ terms (see \eqref{S-alpha-H}) and makes the endpoint weak type $(1,1)$ estimates feasible in the unbounded setting.

The paper is organized as follows.  In next section, we introduce all required notation and
notions.   In Section \ref{sec3}, we establish a modified noncommutative Calder\'{o}n-Zygmund decomposition.  Section \ref{sec4} is devoted to proving Theorems \ref{weak-main}, and the proof of Theorem \ref {Npb} is presented in Section \ref{sec5}.

Throughout the paper, The symbol $c$ stands for a positive constant which may
vary from line to line, and we write $c_A$ to emphasize the constant  $c_A$  depends only on the
parameter$A$. For each  $a\in\mathbb{R}$, the symbols $\lfloor a \rfloor$  denotes  the greatest integer less than or equal to $a$, and $\lceil a\rceil$ denotes  the least integer greater than or equal to $a$.

\section{Preliminaries}\label{sec-2}
\subsection{Noncommutative (weak) Lebesgue spaces} 
Let $(\mathcal{M},\tau)$ be a semifinite von Neumann algebra acting on a fixed Hilbert space $H$, equipped with a faithful normal semifinite trace $\tau$. Denote by $\M^+$ the positive part of $\M$, and let $\mathcal{S}_{\M^+}$ be the set of all $x\in\M^+$ whose support projection satisfies $\tau(\supp x)<\infty$. Let $\mathcal{S}_{\M}$ be the linear span of  $\mathcal{S}_{\M^+}$. Then  $\mathcal{S}_{\M}$ is a $w^*$-dense $*$-subalgebra of $\M$.
For $1\leq p < \infty$, the noncommutative $L_p$ space associated to $\mathcal{M}$ is defined as the completion of $\mathcal{S}_{\mathcal{M}}$ under the norm
$$\|x\|_{L_p(\mathcal{M})}= \tau(|x|^p)^{1/p},\; x\in \mathcal{S}_{\mathcal{M}},$$
where $|x|=(x^*x)^{\frac{1}{2}}$ is the modulus of $x$. For $ p=\infty$, $L_{\infty}(\mathcal{M})$ is defined to be the set $\mathcal{M}$, endowed with the operator norm $\|\cdot\|_{L_{\infty}(\mathcal{M})}=\|\cdot\|_{\mathcal{M}}.$
Let $L_p^+(\M)$ denote the positive part of $L_p(\M)$.

Suppose that $\M$ acts on a separable Hilbert space $H$.
A closed densely defined operator $x$ on $H$ is said to be affiliated with
$\M$ if $u^* xu = x$ for all unitary operators $u$ in the commutant $\M'$	 of $\M$. If $x$ is a densely defined self-adjoint operator on $H$ with spectral decomposition $x=\int_{-\infty}^\infty sde_s^x$, then,
for any Borel subset $B\subset \R$, we denote by $\chi_B(x)$ the corresponding spectral projection $\int_{-\infty}^\infty \chi_B sde_s^x$.
A closed and densely defined operator $x$ affiliated with
$\M$ is called $\tau$-measurable if there exists $\lambda>0$ such that
$$\tau(\chi_{(\lambda,\infty)}(|x|))<\infty.$$
We denote the set of the $*$-algebra of $\tau$-measurable operators by $L_0(\M)$. 
For $x\in L_0({\M})$, the generalized singular value function $\mu(t,x)$ is defined by
$$\mu(t,x)=\inf\big\{s>0:\tau\big(\chi_{(s,\infty)}(|x|)\big)\leq t\big\},\quad t>0.$$
The function $t\mapsto \mu(t,x)$ is non-increasing and right-continuous; for more detailed study of the singular value function, we refer the reader {to \cite{Fa1986,DPS2023}}. Of special interest in this paper is  the noncommutative weak Lebesgue space $L_{p,\infty}(\mathcal{M})$, with the quasi-norm
$$\|x\|_{L_{p,\infty}(\M)}=\sup_{t>0}t^{1/p}\mu(t,x)=\sup_{\lambda>0}\lambda[ \tau(\chi_{(\lambda,\infty)}(|x|))]^{1/p}. $$

\subsection{Noncommutative martingales}  {Let $(\mathcal{M},\tau)$ be a finite von Neumann algebra, and let  $(\mathcal{M}_n)_{n\geq1}$ be an
increasing sequence of von Neumann subalgebras of $\mathcal{M}$ such that the union of $(\mathcal{M}_n)_{n\geq1}$  is weak-$*$ dense in $\mathcal{M}$.} For every $n\geq1$, there is a unique normal conditional expectation $\mathcal{E}_n:\mathcal{M}\to\mathcal{M}_n$ such that $\tau\circ\mathcal E_n=\mathcal E_n$. A noncommutative martingale with respect to the above filtration is a sequence $(x_n)_{n\geq1}$ in $\mathcal{M}$ such that $\mathcal{E}_{n}(x_{n+1})=x_n$ for every $n\geq1$. 
 
 Recall that $(\mathcal{M}_n)_{n\geq1}$ is a regular filtration if there exists a positive constant $R_{reg}\geq1$ such that, for each positive $x\in L_1(\mathcal M)$,
 \begin{equation}\label{reg-M}
 \mathcal{E}_{n}(x)\leq R_{reg}\mathcal{E}_{n-1}(x).
 \end{equation}

\subsection{Cuculescu's projections}
The noncommutative Calder\'{o}n-Zygmund decomposition  is based on  the Cuculescu projections.   We briefly recall  Cuculescu's construction.

\begin{lemma}[{\cite{Cuc} or \cite[Proposition 2.3]{Rand2002}}]\label{Cuculescu}
	{Let $f\in L^+_1(\mathcal{M})$. Consider the martingale $(f_n)_{n\geq1}$ with $f_n=\E_n(f)$, $n\geq1$. For any fixed $\lambda>0$,   there exists a sequence of decreasing projections  $(q_n^{(\lambda)})_{n\geq 1}$ in $\mathcal{M}$ satisfying the following properties:}
	\begin{enumerate}[{\rm (i)}]
		\item for every $n\geq1 $, $q_n^{(\lambda)}\in \mathcal{M}_n $;
		
		\item for every $n\geq1 $, $q_n^{(\lambda)}$ commutes with $q_{n-1}^{(\lambda)}f_n q_{n-1}^{(\lambda)}$;
		
		\item for every $n\geq1 $, $q_n^{(\lambda)}f_nq_n^{(\lambda)}\leq \lambda q_n^{(\lambda)}$;
		
		\item if we set $q^{\lambda}= \wedge_{n=1}^{\infty} q_n^{(\lambda)}$, then
		$\lambda\tau({\bf 1}-q^{\lambda})\leq \tau(({\bf 1}-q^{\lambda})f)\leq \|f\|_{L_1(\mathcal{N})}$.
	\end{enumerate}
\end{lemma}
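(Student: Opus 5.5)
We construct the projections recursively and then check (i)--(iv) in turn, with (iv) carrying the only real content. Set $q_0^{(\lambda)}=\mathbf 1$. Given $q_{n-1}^{(\lambda)}\in\mathcal M_{n-1}\subset\mathcal M_n$, define
\[
q_n^{(\lambda)}=q_{n-1}^{(\lambda)}\,\chi_{[0,\lambda]}\bigl(q_{n-1}^{(\lambda)}f_nq_{n-1}^{(\lambda)}\bigr).
\]
Since $q_{n-1}^{(\lambda)}f_nq_{n-1}^{(\lambda)}$ is a positive operator affiliated with $\mathcal M_n$, its spectral projection $\chi_{[0,\lambda]}(\cdot)$ lies in $\mathcal M_n$. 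This projection commutes with $q_{n-1}^{(\lambda)}$, because the support of $q_{n-1}^{(\lambda)}f_nq_{n-1}^{(\lambda)}$ sits under $q_{n-1}^{(\lambda)}$. Hence $q_n^{(\lambda)}$ is a projection in $\mathcal M_n$ with $q_n^{(\lambda)}\le q_{n-1}^{(\lambda)}$. This gives (i) and the monotonicity.

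For (ii), $q_n^{(\lambda)}$ is a product of $q_{n-1}^{(\lambda)}$ and a spectral projection of $q_{n-1}^{(\lambda)}f_nq_{n-1}^{(\lambda)}$. Both factors commute with $q_{n-1}^{(\lambda)}f_nq_{n-1}^{(\lambda)}$, so their product does too. For (iii), using $q_n^{(\lambda)}\le q_{n-1}^{(\lambda)}$ we have $q_n^{(\lambda)}f_nq_n^{(\lambda)}=q_n^{(\lambda)}\bigl(q_{n-1}^{(\lambda)}f_nq_{n-1}^{(\lambda)}\bigr)q_n^{(\lambda)}$. Functional calculus then bounds this by $\lambda q_n^{(\lambda)}$.

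For (iv), write $\mathbf 1-q^{\lambda}=\sum_{n\ge1}p_n$ with $p_n=q_{n-1}^{(\lambda)}-q_n^{(\lambda)}\in\mathcal M_n$; the sum converges strongly since the projections decrease. By construction $p_n=q_{n-1}^{(\lambda)}\chi_{(\lambda,\infty)}\bigl(q_{n-1}^{(\lambda)}f_nq_{n-1}^{(\lambda)}\bigr)$, which is a spectral projection of $q_{n-1}^{(\lambda)}f_nq_{n-1}^{(\lambda)}$ for $(\lambda,\infty)$. Therefore
\[
p_nf_np_n=p_n\bigl(q_{n-1}^{(\lambda)}f_nq_{n-1}^{(\lambda)}\bigr)p_n\ge\lambda p_n .
\]
Taking traces and using $p_n\in\mathcal M_n$ together with trace preservation of $\mathcal E_n$ gives
\[
\lambda\tau(p_n)\le\tau(p_nf_n)=\tau(\mathcal E_n(p_nf))=\tau(p_nf).
\]
Summing over $n$, with normality of $\tau$ and $f\in L_1^+$ justifying the interchange, yields $\lambda\tau(\mathbf 1-q^{\lambda})\le\tau((\mathbf 1-q^{\lambda})f)$. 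Finally, $0\le\mathbf 1-q^\lambda\le\mathbf 1$ and $f\ge0$ give $\tau((\mathbf 1-q^{\lambda})f)=\tau(f^{1/2}(\mathbf 1-q^{\lambda})f^{1/2})\le\tau(f)=\|f\|_1$.

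The main technical point is that $f_n$ may be unbounded. So $q_{n-1}^{(\lambda)}f_nq_{n-1}^{(\lambda)}$ has to be read as the positive measurable operator $(f_n^{1/2}q_{n-1}^{(\lambda)})^*(f_n^{1/2}q_{n-1}^{(\lambda)})$, and the identity $\tau(p_nf_n)=\tau(p_nf)$ has to be justified in $L_1$. One option is to truncate $f$ to bounded positive elements and pass to the limit by normality. The simpler route is to note that $p_n$ is bounded and the bimodule property of $\mathcal E_n$ on $L_1$ suffices.
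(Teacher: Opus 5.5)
Your proof is correct and is the standard Cuculescu argument; the paper does not prove this lemma but cites it to Cuculescu and Randrianantoanina, and in Section~\ref{sec3} it reuses the same recursive spectral-projection construction for the two-parameter filtration. Your normalization $q_n^{(\lambda)}=q_{n-1}^{(\lambda)}\chi_{[0,\lambda]}\bigl(q_{n-1}^{(\lambda)}f_nq_{n-1}^{(\lambda)}\bigr)$ is the right one, because it forces $p_n=\chi_{(\lambda,\infty)}\bigl(q_{n-1}^{(\lambda)}f_nq_{n-1}^{(\lambda)}\bigr)$ and hence $p_nf_np_n\ge\lambda p_n$; by contrast, the formulas written in Section~\ref{sec3} (a bare $\chi_{[0,\lambda]}(\widehat q f\widehat q)$, or $\chi_{(0,\lambda]}(q_{\widehat Q}f_Qq_{\widehat Q})$) respectively fail to be decreasing or break (iv) on the kernel of $f$.
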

In what follows, we simply write $(q_n)_{n\geq 1}$ for the sequence of Cuculescu's projections $(q_n^{(\lambda)})_{n\geq 1}$ associated with the martingale generated by  a given $f \in L_1^+(\mathcal{M})$, and write $q$ for the corresponding $q^\lambda$. Set
\begin{equation}\label{pn}
	p_n = q_{n-1}-q_n, \quad n\geq 1.
\end{equation}
Then
\begin{equation}\label{sump}
	\sum_{n\geq1} p_n={\bf 1}-q.
\end{equation}

\subsection{Vilenkin systems and the partial sum operators for operator-valued functions}\label{sec2.4}
Let $\mathbf m=(m_k)_{k\geq0}$ be a sequence of positive integers with $m_k\geq2$ for all $k\ge 0$, and 
let $\mathbb{Z}_{m_{k}}$ denote the discrete cyclic group of order $m_k$.
The associated Vilenkin group is the compact Abelian group
$$G_{\mathbf{m}}=\prod_{k=0}^{\infty}\mathbb{Z}_{m_{k}},$$
endowed with the product topology and the product Haar probability measure. In the particular case  $m_k=2$ for each $k\geq0$, the group $G_{\mathbf{m}}$ is the dyadic group. 
We say that $G_{\mathbf{m}}$ is bounded (resp. unbounded) if the sequence
$ m=(m_k)_{k\ge 0}$ is bounded (resp. unbounded).

Throughout the paper, we identify  $(G_{\mathbf{m}},+_{G_{\mathbf{m}}})$ with $([0,1),\dotplus)$. Set $M_0:=1$ and $M_{k+1}:=m_k M_k$ for $k\ge 0$. Then every $x\in[0,1)$ admits an expansion
\begin{equation}\label{x-expre}
x=\sum\limits_{k=0}^\infty\frac{x_k}{M_{k+1}},\quad\quad 0\leq x_k < m_k,\quad x_k \in \mathbb{N}.
\end{equation}
If $x, y\in [0,1)$ are given by
$$x=\sum_{k=0}^\infty \frac{x_k}{M_{k+1}} ,\quad y=\sum_{k=0}^\infty\frac{y_k}{M_{k+1}}, \quad0\leq x_k, y_k < m_k,\quad x_k, y_k \in \mathbb{N},$$
we define
$$x\dotplus y:=\sum_{k=0}^\infty\frac{(x_k \oplus y_k)}{M_{k+1}}, \quad\quad 0\leq x_k, y_k < m_k,\quad x_k, y_k \in \mathbb{N},$$
where
$$x_k\oplus y_k=(x_k +y_k)\,\mbox{mod}\,m_k. $$
We denote by $\dot{-}$ the inverse operation of $\dot{+}$.

The generalized Rademacher functions are defined by
$$
r_k(x):=\exp\big(\frac{2\pi \mathrm{i}x_k}{m_k}\big), \qquad x\in [0,1),\qquad k\geq0.
$$
The product system generated by the generalized Rademacher functions is the  Vilenkin system:
$$
\phi_{n}:=\prod_{k=0}^{\infty}{r_k}^{n_k},\qquad n\geq1,
$$
where $n$ has the representation
\begin{equation}\label{e25}
n=\sum_{k=0}^{\infty} n_k M_{k}, \qquad  0 \leq n_k < m_k,    n_k \in \mathbb N.
\end{equation}
For the natural numbers $n=\sum_{k=0}^{\infty}n_kM_{k}$ and $\ell=\sum_{k=0}^{\infty} \ell_kM_{k}$, we define
\begin{equation}\label{addition}
n\dotplus \ell:=\sum_{k=0}^{\infty}(n_k \oplus \ell_k)M_{k}.
\end{equation}
The following elementary properties can be found in  \cite{Gat1999}.
\begin{lemma} \label{propetyVilenkin}
 Let $n,\ell \in \mathbb{N}$, and $\overline n=\sum_{k=0}^{\infty}(m_k-n_k)M_{k}$. We have
\begin{enumerate}[{\rm (i)}]
\item $|\phi_{n}|=1,$
\item $\phi_{n\dotplus \ell}=\phi_n \phi _\ell,$
\item $\phi_{\overline n}=\overline{\phi}_n$.
\end{enumerate}
\end{lemma}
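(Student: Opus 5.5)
The statement to prove is Lemma~\ref{propetyVilenkin}. All three parts follow by computing digit by digit from the definitions. The products are finite for each fixed $n$, since only finitely many digits $n_k$ are nonzero. Here I take the convention that the $k$-th digit of $\overline n$ is $(m_k-n_k)\bmod m_k$, i.e.\ it is $0$ when $n_k=0$. This is the reading under which $\overline n$ is a genuine natural number, and it is what the mod arithmetic in \eqref{addition} implicitly requires.

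For (i), each generalized Rademacher function satisfies $|r_k(x)|=|\exp(2\pi\mathrm{i}x_k/m_k)|=1$. Hence $|\phi_n(x)|=\prod_k|r_k(x)|^{n_k}=1$ for every $x\in[0,1)$.

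For (ii), write $n\dotplus\ell=\sum_k(n_k\oplus\ell_k)M_k$. By uniqueness of the expansion \eqref{e25}, its $k$-th digit is $n_k\oplus\ell_k$. Then
\[
\phi_{n\dotplus\ell}(x)=\prod_k r_k(x)^{n_k\oplus\ell_k}.
\]
The key observation is that $r_k(x)^{m_k}=\exp(2\pi\mathrm{i}x_k)=1$, because $x_k\in\mathbb N$. So $r_k(x)^{j}$ depends only on $j\bmod m_k$, which gives $r_k(x)^{n_k\oplus\ell_k}=r_k(x)^{n_k+\ell_k}=r_k(x)^{n_k}r_k(x)^{\ell_k}$. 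Multiplying over $k$ yields $\phi_n\phi_\ell$.

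For (iii), the same periodicity gives $r_k(x)^{(m_k-n_k)\bmod m_k}=r_k(x)^{-n_k}$. Since $|r_k(x)|=1$, we have $r_k(x)^{-n_k}=\overline{r_k(x)}^{\,n_k}$. Taking the product over $k$ gives $\phi_{\overline n}=\overline{\phi_n}$.

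The only point needing care is the digit convention for $\overline n$ when $n_k=0$, together with the uniqueness of the mixed-radix expansion used to read off digits. Once the periodicity $r_k^{m_k}=1$ is noted, there is no real obstacle.
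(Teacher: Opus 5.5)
Your proof is correct and is the standard digit-by-digit verification from $|r_k|=1$, the periodicity $r_k^{m_k}=1$, and uniqueness of the mixed-radix expansion; the paper gives no proof of its own and only cites Gát for these properties. Your reading of $\overline n$ with digits $(m_k-n_k)\bmod m_k$ is necessary, not merely convenient, because the literal series $\sum_k (m_k-n_k)M_k$ contains the terms $m_kM_k=M_{k+1}$ for all large $k$ and diverges; with your convention $\overline n$ is the $\dotplus$-inverse of $n$, so (iii) also follows from (i) and (ii) via $\phi_n\phi_{\overline n}=\phi_0=1$.
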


In the sequel, we let $\mathcal{N}=L_{\infty}(0,1)\bar{\otimes }\mathcal{M}$ equipped with the  tensor trace $\varphi=\int \otimes \tau$.  For $f \in L_1(\mathcal{N})$, the $k$-th  Vilenkin-Fourier coefficient of $f$ is defined by
$$\widehat {f}(k) := \int^1_0 f(x) \overline{\phi}_{k}(x)d\mu(x), \qquad k\geq0.$$
Now $\widehat{f}(k)\in \mathcal{M}$. If  $\mathcal{M}=\mathbb{C}$, then $\widehat{f}(k)$ is a number. Denote by $S_{n}(f)$ the $n$-th partial sum of the Vilenkin-Fourier series of  $f\in L_1(\mathcal{N})$, namely,
\begin{equation}\label{ps}
S_{n}(f) := \sum_{k=0}^{n-1} \widehat {f}(k)\phi_{k},\qquad n\geq1.
\end{equation}
We can rewrite it as follows
$$S_{n} f(x) =f\ast D_n(x) =\int_0^1 f(y) D_{n}(x\dot{-}y) d\mu(y), \qquad  x\in[0,1),$$
where $D_{n}$ is the Vilenkin Dirichlet kernel defined by
$$
D_n := \sum_{k=0}^{n-1} \phi_k.
$$
Recall  that the  Vilenkin-Dirichlet kernels (see e.g. \cite[page 312]{Yo1976}) satisfy
\begin{equation}\label{e5}
	D_{M_n}(x) = \Big\{
	\begin{array}{ll}
		M_n, & \hbox{if $x \in [0,\frac{1}{M_n})$} \\
		0, & \hbox{if $x \in [\frac{1}{M_n},1)$}
	\end{array}.
	\Big.
\end{equation}
It follows from \eqref{e5} that
\begin{equation}\label{S2n}
S_{M_n}f(x)=\int_0^1 f(y) D_{M_n}(x\dot{-}y) d\mu(y)=\mathbb{E}_n(f)(x), \qquad n\geq1, x\in [0,1).
\end{equation}

As a consequence of the orthogonality of Vilenkin system, we obtain the following elementary results.
\begin{lemma}\label{lem:type22}
	If $f\in L_2(\mathcal{N})$, then
	$$\sup_{n\geq1}\|S_n(f)\|_{L_2(\mathcal{N})}\leq \|f\|_{L_2(\mathcal{N})},$$
	and moreover,
	$$\sum_{k=0}^\infty\varphi(|\widehat f(k)|^2)= \|f\|_{L_2(\mathcal{N})}^2 .$$
\end{lemma}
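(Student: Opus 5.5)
The statement to prove is Lemma~\ref{lem:type22}, the $L_2$ bound for $S_n$ together with the operator-valued Plancherel identity. The plan is to reduce both claims to orthonormality of the Vilenkin system in $L_2(0,1)$, tensored with $\mathcal M$.

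\textbf{Step 1: Plancherel on elementary tensors.} First take $f$ in the algebraic span $L_2(0,1)\otimes \mathcal S_{\mathcal M}$, say $f=\sum_j a_j\otimes x_j$ (a finite sum). Then $\widehat f(k)=\sum_j \widehat{a_j}(k)x_j$. By Lemma~\ref{propetyVilenkin}, $\int_0^1\phi_k\overline{\phi}_\ell\,d\mu=\delta_{k\ell}$, so the system $\{\phi_k\}$ is orthonormal; it is also complete in $L_2(0,1)$, since the span of the $\phi_k$ with $k<M_n$ is exactly the space of functions constant on the intervals $[j/M_n,(j+1)/M_n)$. Expanding
\[
\varphi(|f|^2)=\sum_{i,j}\langle a_j,a_i\rangle_{L_2(0,1)}\,\tau(x_i^*x_j)
\]
and inserting the scalar Parseval identity $\langle a_j,a_i\rangle=\sum_k\widehat{a_j}(k)\overline{\widehat{a_i}(k)}$ gives
\[
\varphi(|f|^2)=\sum_k\tau\Big(\big|\textstyle\sum_j\widehat{a_j}(k)x_j\big|^2\Big)=\sum_k\tau(|\widehat f(k)|^2).
\]
The interchange of the finite sums over $i,j$ with the sum over $k$ is justified by absolute convergence, via Cauchy--Schwarz on the scalar coefficients. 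Note that $\sum_k\varphi(|\widehat f(k)|^2)$ in the statement means $\sum_k\tau(|\widehat f(k)|^2)$, because $\widehat f(k)$ is constant in $x$.

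\textbf{Step 2: Density.} Next extend the identity to all $f\in L_2(\mathcal N)$. The span used in Step 1 is dense in $L_2(\mathcal N)\cong L_2(0,1)\otimes_2 L_2(\mathcal M)$. For each $k$, the map $f\mapsto\widehat f(k)$ is contractive from $L_2(\mathcal N)$ to $L_2(\mathcal M)$, since $\|\widehat f(k)\|_2\le\|f\|_{L_2(\mathcal N)}$ by Step 1 or simply by Cauchy--Schwarz. Step 1 says that $f\mapsto(\widehat f(k))_k$ is an isometry from a dense subspace into $\ell_2(L_2(\mathcal M))$. This isometry extends by continuity, and the extension agrees coordinatewise with the Fourier coefficients. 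This proves the second claim.

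\textbf{Step 3: Bound for $S_n$.} Since $\phi_k$ is unimodular and $\{\phi_k\}$ is orthonormal, $S_n(f)$ has Fourier coefficients $\widehat f(k)$ for $k<n$ and $0$ otherwise. Applying Step 2 to $S_n(f)\in L_2(\mathcal N)$ gives
\[
\|S_nf\|_2^2=\sum_{k<n}\tau(|\widehat f(k)|^2)\le\|f\|_2^2 .
\]
This holds for every $n$, which gives the uniform bound.

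The only mildly delicate point is Step 2: checking that the coefficient map is well defined and continuous on $L_2(\mathcal N)$. Here one should note that $L_2(\mathcal N)\subset L_1(\mathcal N)$ when $\tau$ is finite, and otherwise one defines the coefficients directly via the $L_2$ pairing. Everything else is routine Hilbert-space tensor algebra.
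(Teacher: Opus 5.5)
Your proposal is correct and takes the route the paper indicates: both assertions follow from the orthonormality of the Vilenkin system, tensored with $L_2(\mathcal M)$; the paper states this without giving details. You fill in what the paper leaves implicit: completeness of $\{\phi_k\}$ (which the Plancherel \emph{equality} needs, not just orthogonality), the density/continuity extension, and the definition of $\widehat f(k)$ as an $L_2(\mathcal M)$-valued integral when $\tau$ is infinite.
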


\section{A modified  noncommutative Calder\'{o}n-Zygmund decomposition}\label{sec3}
In this section, we establish a modified Calder\'{o}n-Zygmund decomposition based on Cuculescu's construction for a standard noncommutative martingale.  Accordingly, we first introduce a finer filtration on an arbitrary Vilenkin Group.

\subsection{A finer filtration on Vilenkin groups}\label{fil}
We start with a review of the canonical filtration on the Vilenkin group. Consider the probability space $([0,1),\mathcal{F}',\mu)$, where $\mu$ denotes the Lebesgue measure on $[0,1)$ and $\mathcal{F}'$ is the $\sigma$-algebra  generated by the Vilenkin intervals
$$\mathcal{F}':=\sigma\Big\{I_{k}(x): k \in \mathbb{N},  x\in [0,1)\Big\},$$
where, under the identification of $G_{\mathbf m}$ with $[0,1)$, we define for each $x\in [0,1)$ and $k\in \mathbb{N}$, 
$$I_{0}(x):=G_{\mathbf{m}}, \quad I_{k}(x):=\Big\{t=(t_{i})_{i \in \mathbb{N}} \in G_{\mathbf{m}}: t_{i}=x_{i} \text { for } i<k\Big\},$$
each $I_k(x)$ admits the interval representation
$$I_k(x)=[\frac{j}{M_k}, \frac{j+1}{M_k}),$$
where $0\leq j<M_k$ is  uniquely determined by the relationship $x\in [\frac{j}{M_k}, \frac{j+1}{M_k})$.
Then $\{I_k(x):k\ge0\}$ forms a neighborhood base at $x$. In particular, we write $I_k:=I_k(0)$.   For each $k\geq 0$, set
\begin{equation}\label{VFn}
	\mathcal{F}'_k=\sigma \{ I_k(x): x\in [0,1)\}.
\end{equation}
Then $(\mathcal{F}'_k)_{k\geq0}$ is a filtration of $\mathcal{F}'$.  Let $F_{k}'$ denote the collection of all atoms of $\mathcal{F}_{k}'$.
Since this canonical filtration fails to be regular on any unbounded Vilenkin group, we introduce below a finer regular filtration.

\begin{construction}\label{construction} Let $\mathbf{m}=(m_k)_{k\geq0}\subset\mathbb{N}$ with $m_k\geq2$ for all $k\geq0$. Consider the probability space $([0,1),\mathcal F',\mu)$ introduced above. We construct a new $\sigma$-algebra $\mathcal F$ together with a two-parameter filtration
	\[
	(\mathcal F_{k,\ell})_{k\ge0,\ 1\le \ell\le \lceil\log_2 m_{k-1}\rceil},
	\]
which generates $\mathcal F$, where we set $m_{-1}=2$, that is $\lceil\log_2 m_{k-1}\rceil=1$, and $\mathcal F_{0,1}:=\sigma\{\emptyset,[0,1)\}=\mathcal F_0'$ is the $\sigma$ trivial algebra.  Let $I_0=[0,1)$.
 
\smallskip
\noindent\textbf{Step 1 (level $k=1$, with $M_1=m_0$).}  Divide $I_0$ into two $ \mathcal{F}'_1$-measurable subintervals
$$I_{1,1}^1=[0,\frac{\lfloor\frac{m_0}{2}\rfloor}{M_1}), \quad I_{1,1}^2=[\frac{\lfloor\frac{m_0}{2}\rfloor}{M_1},1),$$
and define
$$\mathcal{F}_{1,1}=\sigma\{I_{1,1}^1, \quad I_{1,1}^2\}.$$
These intervals satisfy
$$\mu(I_{1,1}^2)-M_1^{-1}\leq\mu(I_{1,1}^1)\leq \mu(I_{1,1}^2).$$ 
Throughout the construction, whenever an interval is split into two subintervals, we require the left subinterval to have measure no larger than  the right one.
 If  both $I_{1,1}^1$ and $I_{1,1}^2$  are  atoms of $\mathcal{F}'_1$,  then  $\mathcal{F}_{1,1}=\mathcal{F}'_1$ and the procedure stops at this stage.  Otherwise, every interval that is not yet an atom of $\mathcal F_1'$ is subdivided again in the same non-equal manner. For example:
\begin{enumerate}[{\rm (i)}]
\item If $I_{1,1}^1$ is an atom while $I_{1,1}^2$ is not, we subdivide $I_{1,1}^2$ as:  
	$$I_{1,1}^2=I_{1,2}^3\cup I_{1,2}^4,\quad I_{1,2}^3, I_{1,2}^4\in\mathcal{F}'_1,$$
	where $$I_{1,2}^3=[\frac{\lfloor\frac{m_0}{2}\rfloor}{M_1},\frac{\lfloor\frac{m_0}{2}\rfloor+\lfloor\frac{m_0-\big\lfloor\frac{m_0}{2}\rfloor}{2}\big\rfloor}{M_1})\quad\mbox{and}\quad I_{1,2}^4=[\frac{\lfloor\frac{m_0}{2}\rfloor+\lfloor\frac{m_0-\lfloor\frac{m_0}{2}\rfloor}{2}\rfloor}{M_1},1).$$
In this case, we set
	$$\mathcal{F}_{1,2}=\sigma\{I_{1,2}^1=I_{1,2}^2:=I_{1,1}^1, I_{1,2}^3, I_{1,2}^4\}.$$

\item If neither  $I_{1,1}^1$ nor $I_{1,1}^2$  is an atom of $\mathcal{F}'_1$, we additionally subdivide $I_{1,1}^1$:
	$$I_{1,1}^1=[0,\frac{\lfloor\frac{m_0}{2^2}\rfloor}{M_1})\cup[\frac{\lfloor\frac{m_0}{2^2}\rfloor}{M_1},\frac{\lfloor\frac{m_0}{2}\rfloor}{M_1})=:I_{1,2}^1\cup I_{1,2}^2,$$
	 with $I_{1,2}^1,I_{1,2}^2\in\mathcal F'_1$, and we define
	$$\mathcal{F}_{1,2}=\sigma\{I_{1,2}^1, I_{1,2}^2, I_{1,2}^3, I_{1,2}^4\}.$$
\end{enumerate}
Each newly created pair satisfies the same  measure constraint 
$$\mu(I_{1,2}^2)-M_1^{-1}\leq\mu(I_{1,2}^1)\leq \mu(I_{1,2}^2)\quad\mbox{and\quad $\mu(I_{1,2}^4)-M_1^{-1}\leq\mu(I_{1,2}^3)\leq \mu(I_{1,2}^4)$}.$$ 
After at most $\lceil\log_2 m_0\rceil$  such steps, we reach $\mathcal{F}'_1$, that is
$$\mathcal{F}_{1,\lceil\log_2 m_0\rceil}=\mathcal{F}'_1.$$

\smallskip
\noindent\textbf{Step 2 (level $k=2$, with $M_2=m_0m_1$).}
For each atom $I_{1,\lceil\log_2 m_0\rceil}^i\in \mathcal{F}_{1,\lceil\log_2 m_0\rceil}$, we apply the same non-equal splitting procedure. This gives rise to subintervals
$$I_{2,1}^{2i-1},\ I_{2,1}^{2i},\quad1\leq i\leq m_0,$$
satisfying
$$\mu(I_{2,1}^{2i})-M_2^{-1}\leq\mu(I_{2,1}^{2i-1})\leq\mu(I_{2,1}^{2i}).$$
Define
$$\mathcal{F}_{2,1}=\sigma\{I_{2,1}^{2i-1},I_{2,1}^{2i}:\quad 1\leq i\leq m_0\}.$$
If  all these intervals are already atoms  of $\mathcal{F}'_2$, then $\mathcal F_{2,1}=\mathcal F_2'$; otherwise, we continue the same refinement for at most $\lceil\log_2 m_1\rceil$  steps until 
$$\mathcal{F}_{2,\lceil\log_2 m_1\rceil}=\mathcal{F}'_2.$$
Proceeding inductively, we obtain a sequence of $\sigma$-algebras with two parameters
$$(\mathcal{F}_{k,\ell})_{k\geq0, 1\leq \ell\leq\lceil\log_2 m_{k-1}\rceil}.$$
From the above construction, it is easy to conclude that there is a natural   order on these algebras, and thus they form an increasing filtration. Finally, set
\[
\mathcal F:=\sigma\bigl(\bigcup_{k\geq0}\ \bigcup_{\ell=1}^{\lceil\log_2 m_{k-1}\rceil}\mathcal F_{k,\ell}\bigr)\quad \mbox{and}\quad F:=\bigcup_{k\geq0}\ \bigcup_{\ell=1}^{\lceil\log_2 m_{k-1}\rceil} F_{k,\ell},
\]
where   $F_{k,\ell}$ denote the collection of all atoms of $\mathcal{F}_{k,\ell}$.

For $f\in L_\infty([0,1),\mathcal F,\mu)$, let $(\mathbb{E}_{k,\ell})_{k\geq0,1\leq \ell\leq\lceil\log_2 m_{k-1}\rceil}$ denote  the conditional expectations with respect to the filtration $(\mathcal{F}_{k,\ell})_{k\geq0, 1\leq \ell\leq\lceil\log_2 m_{k-1}\rceil}$. They are given by
\begin{equation}\label{condexp}
\mathbb{E}_{k,\ell}(f)=\sum_{Q\in F_{k,\ell}} f_Q \chi_Q, \quad  k\geq0, 1\leq \ell\leq\lceil\log_2 m_{k-1}\rceil,
\end{equation}
where 
$$f_Q=\frac{1}{|Q|}\int_{Q} f(x)d\mu(x).$$
For each $k,\ell$, we simply denote $L_{\infty}([0,1),\mathcal{F}_{k,\ell},\mu)$ by $ L_{\infty}(\mathcal{F}_{k,\ell})$, and set
\begin{equation}\label{VNA}
\mathcal{N}_{k,\ell}=L_{\infty}(\mathcal{F}_{k,\ell})\bar{\otimes}\mathcal{M}.
\end{equation}
Then $(\mathcal{N}_{k,\ell})_{k\geq0,1\leq \ell\leq\lceil\log_2 m_{k-1}\rceil}$ is an increasing sequence of von Neumann subalgebras of $\mathcal{N}$ such that $\cup_{k\geq0,1\leq \ell\leq\lceil\log_2 m_{k-1}\rceil}\mathcal{N}_{k,\ell} $ is weak$^{\ast}$ dense in $\mathcal{N}$. The conditional expectation of $\mathcal{N}$ onto $\mathcal{N}_{k,\ell}$ is $\mathbb{E}_{k,\ell}\otimes \mathrm{id}_{\mathcal{M}}$, simply denoted by $\mathbb{E}_{k,\ell}$ whenever there is no confusion, where $\mathrm{id}_{\mathcal{M}}$ is the identity map on $\mathcal{M}$. 
\end{construction}

\begin{lemma}\label{regular} The filtration $(\mathcal{N}_{k,\ell})_{k\geq0,1\leq \ell\leq\lceil\log_2 m_{k-1}\rceil}$ is regular, with constant $R_{\rm reg}\leq3$. 
\end{lemma}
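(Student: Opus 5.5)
The plan is to reduce regularity to a purely geometric statement about the atoms of the construction. For consecutive $\sigma$-algebras $\mathcal G\subset\mathcal G^+$ in the linear order on $(\mathcal F_{k,\ell})$, every atom $P$ of $\mathcal G$ is either itself an atom of $\mathcal G^+$ or the disjoint union of exactly two atoms $Q_1,Q_2$ of $\mathcal G^+$. The key claim is then
$$\mu(P)\le 3\,\mu(Q)\qquad\text{for every atom } Q\in F^+ \text{ with } Q\subset P.$$
Given this claim, take positive $x\in L_1(\mathcal N)$. By \eqref{condexp}, on $Q\subset P$ we have
$$\mathbb E^+(x)\chi_Q=\frac{1}{\mu(Q)}\int_Q x\,d\mu\;\le\;\frac{1}{\mu(Q)}\int_P x\,d\mu=\frac{\mu(P)}{\mu(Q)}\,x_P\;\le\;3\,x_P .$$
The inequality $\int_Q x\le\int_P x$ holds in $\mathcal M^+$ because $x\ge0$ pointwise as an $\mathcal M$-valued function. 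Summing over $Q$, whose indicators are central with respect to $\mathcal M$, gives $\mathbb E^+(x)\le 3\,\mathbb E(x)$. This is exactly \eqref{reg-M} with $R_{\rm reg}\le 3$. The operator-valued tensoring causes no difficulty, since everything is diagonal in the $[0,1)$ variable.

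It remains to prove the measure claim, which I expect to be the only real work. The case where $P$ is not split is trivial, with ratio $1$. Otherwise $P$ is split into a left piece $Q_1$ and a right piece $Q_2$ with $\mu(Q_2)-M^{-1}\le\mu(Q_1)\le\mu(Q_2)$. Here $M=M_k$ is the relevant level, and both pieces are unions of atoms of $\mathcal F'_k$ of length $M^{-1}$.
\begin{itemize}
\item Within a level $k$ (between $\mathcal F_{k,\ell}$ and $\mathcal F_{k,\ell+1}$, or between $\mathcal F'_{k-1}$ and $\mathcal F_{k,1}$), write $\mu(P)=jM^{-1}$ with $j\ge2$ (since $P$ is split). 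Then $\mu(Q_1)=\lfloor j/2\rfloor M^{-1}$ and $\mu(Q_2)=\lceil j/2\rceil M^{-1}$.
\item The ratio is therefore $\mu(P)/\mu(Q_1)=j/\lfloor j/2\rfloor$. This equals $2$ for even $j$ and $(2r+1)/r\le3$ for odd $j=2r+1$, $r\ge1$; the worst case is $j=3$.
\item Trivially $\mu(P)/\mu(Q_2)\le2$.
\end{itemize}

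The main obstacle is bookkeeping rather than analysis: making precise that the construction's ordering really makes consecutive algebras differ by such a binary split of each atom, and that each split is along $\mathcal F'_k$-atoms of equal length. Atoms that are already $\mathcal F'_k$-atoms are carried over unchanged, as $I^1_{1,2}=I^2_{1,2}:=I^1_{1,1}$ in the construction. The transition from the last stage of level $k$ to the first stage of level $k+1$ is covered by the same argument with $M=M_{k+1}$, since $\mathcal F_{k,\lceil\log_2 m_{k-1}\rceil}=\mathcal F'_k$. I would state this as a short induction on the construction steps. The constant $3$, attained when an atom consisting of three $\mathcal F'_k$-cells is split $1+2$, is the content of the lemma.
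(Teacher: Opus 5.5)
Your proof is correct and follows the paper's argument. Both compare each new atom with its parent: an unsplit atom gives ratio $1$, while a split into a left piece of $\lfloor j/2\rfloor$ cells and a right piece of $\lceil j/2\rceil$ cells gives ratios at most $3$ and $2$ (the paper phrases this via $|Q^r|-M_k^{-1}\le|Q^l|$ and $|Q^l|\ge M_k^{-1}$). You also spell out the step from $\mu(P)\le 3\mu(Q)$ to $\mathbb E^+(x)\le 3\,\mathbb E(x)$, which the paper leaves implicit; note that for $x\in L_1$ the averages lie in $L_1(\mathcal M)^+$ rather than $\mathcal M^+$, which changes nothing.
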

\begin{proof} Since
	\[\mathcal N_{k,\ell}=L_\infty(\mathcal F_{k,\ell})\bar\otimes \mathcal M,\]
it suffices to verify the corresponding regularity estimate for the underlying scalar filtration
	\[
	(\mathcal F_{k,\ell})_{k\ge 0,\ 1\le \ell\le \lceil \log_2 m_{k-1}\rceil}.
	\]
We view the doubly indexed family $(\mathcal F_{k,\ell})$ in natural  order and adopt the convention
\[
\mathcal F_{k,\ell-1} =
	\mathcal F_{k-1,\lceil \log_2 m_{k-2}\rceil},\  \text{if } \  \ell=1, 
\]
Thus, for every non-initial index $(k,\ell)$,  fix \(k\ge 1\), \(1\le \ell\le \lceil \log_2 m_{k-1}\rceil\), and let \(Q\in F_{k,\ell}\). Here and in the sequel,  we denote by $\widehat Q\in F_{k,l-1}$ the unique atom containning $Q$.  We distinguish two cases.
	
	\medskip
	\noindent\textbf{Case 1. \(\widehat Q\) is not subdivided in passing from \(\mathcal F_{k,\ell-1}\) to \(\mathcal F_{k,\ell}\).} In this case, we have
	\[
Q=\widehat Q,\quad\mbox{and}\quad	|\widehat Q|=|Q|.
	\]
	
	\medskip
	\noindent\textbf{Case 2. \(\widehat Q\) is subdivided in passing from \(\mathcal F_{k,\ell-1}\) to \(\mathcal F_{k,\ell}\).} Then \(\widehat Q\) is split into two children, say \(Q^l\) and \(Q^r\), by construction,
	\[
	|Q^r|-M_k^{-1}\le |Q^l|\le |Q^r|.
	\]
	Moreover, each atom of \(\mathcal F_{k,\ell}\) is a union of atoms of \(\mathcal F_k'\). Consequently,
	\[
	|Q^l|\ge M_k^{-1},
	\qquad
	|Q^r|\ge M_k^{-1}.
	\]
	We now consider the two possible positions of \(Q\).
	
	\smallskip
	\noindent\textbf{Subcase 2.1. \(Q=Q^l\).}
	Then
	\[
	|\widehat Q|
	=
	|Q^l|+|Q^r|
	\le
	|Q^l|+\bigl(|Q^l|+M_k^{-1}\bigr)
	\le
	3|Q^l|
	=
	3|Q|.
	\]
	
	\smallskip
	\noindent\textbf{Subcase 2.2. \(Q=Q^r\).}
	Then
	\[
	|\widehat Q|
	=
	|Q^l|+|Q^r|
	\le
	2|Q^r|
	=
	2|Q|.
	\]
	Thus, in all cases,
	\[
	|Q|\le |\widehat Q|\le 3|Q|.
	\]
	This proves that the filtration is regular with constant \(R_{\mathrm{reg}}\le 3\).
\end{proof}

Let $f\in L_1^+(\mathcal N)$, $\lambda>0$  and  $(\mathcal{N}_{k,\ell})_{k\geq0,1\leq \ell\leq\lceil\log_2 m_{k-1}\rceil}$ be the filtration introduced in Construction \ref{construction}.
Denote $f_{k,\ell}=\mathbb{E}_{k,\ell}(f)$.  Applying Cuculescu's construction \cite{Cuc} to $(f_{k,\ell})_{k\geq0,1\leq \ell\leq\lceil\log_2 m_{k-1}\rceil}$, one can find a sequence of decreasing projections
$(q_{k,\ell})_{k\geq0,1\leq \ell\leq\lceil\log_2 m_{k-1}\rceil}$. That is, one defines inductively for $k\geq0,1\leq \ell\leq\lceil\log_2 m_{k-1}\rceil$,  
$$q_{k,l}:=\chi_{[0,\lambda]}(\widehat{q}_{k,\ell}f_{k,\ell}\widehat{q}_{k,\ell}),$$
where
\[
\widehat{q}_{k,\ell}=
\begin{cases}
	\mathbf{1}_{\mathcal{N}} & \text{if } k=0\ \mbox{and}\ \ell=1, \\
	q_{k-1,\lceil\log_2 m_{k-2}\rceil} & \text{if } k\geq1\ \mbox{and}\ \ell=1,\\
	q_{k,\ell-1} &\text{otherwise}.
\end{cases}
\]
Then in the present semi-commutative setting, i.e. $\mathcal{N}=L_{\infty}(0,1)\bar{\otimes }\mathcal{M}$, each $q_{k,\ell}$ admits the following expression
\begin{equation}\label{qnlQ}
	q_{k,\ell}
	=\sum_{Q\in F_{k,\ell}} q_Q\chi_Q,
\end{equation}
where $q_Q$ is a projection in  $\mathcal{M}$ given by $q_Q =\chi_{(0,\lambda]}(q_{\widehat{Q}} f_Q q_{\widehat{Q}})$ with $q_{\widehat{Q}}=1_\mathcal M$ if $Q=[0,1)$.
Then one can define the sequence $(p_{k,\ell})_{k\geq0,1\leq \ell\leq\lceil\log_2 m_{k-1}\rceil}$ of disjoint projections by 
\begin{equation}\label{pnlQ}
	p_{k,\ell}=\widehat{q}_{k,\ell} -q_{k,\ell}=\sum_{Q\in F_{k,\ell}}(q_{\widehat{Q}}-q_Q)\chi_Q=\sum_{Q\in F_{k,\ell}}p_Q\chi_Q.
\end{equation}
Moreover,
\begin{equation}\label{1-q}
\mathbf 1_{\mathcal N}-q=\sum_{k\geq0}\sum_{\ell=1}^{\lceil\log_2 m_{k-1}\rceil}p_{k,\ell}=\sum_{k\geq0}\sum_{\ell=1}^{\lceil\log_2 m_{k-1}\rceil}\sum_{Q\in F_{k,\ell}}p_Q\chi_Q=\sum_{k\geq-1}\sum_{\ell=1}^{\lceil\log_2 m_{k}\rceil}\sum_{Q\in F_{k+1,\ell}}p_Q\chi_Q.
\end{equation}

The following lemma, taken from \cite[Proposition~1.1]{Me2007}, will be used repeatedly in the sequel.
\begin{lemma}\label{AcB}
For any $a,b\in L_2(\mathcal{N})$, there exists $u\in L_{\infty}(\M)$ with $\|u\|_{L_{\infty}(\M)}\leq 1$ such that
\[
\int_0^1 a^{*}(x)b(x)d\mu(x)=\Big(\int_0^1 a^{*}(x)a(x)d\mu(x)\Big)^{1/2} \cdot u\cdot\Big(\int_0^1 b^{*}(x)b(x)d\mu(x)\Big)^{1/2}.
\]
\end{lemma}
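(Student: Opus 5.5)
The plan is to reduce the statement to a Cauchy--Schwarz-type factorization in the semifinite algebra $\mathcal M$, using the column space $L_2(\mathcal N)\to L_2(\mathcal M;\ell_2^c)$ picture. Set
\[
A=\int_0^1 a^*a\,d\mu,\qquad B=\int_0^1 b^*b\,d\mu,\qquad C=\int_0^1 a^*b\,d\mu .
\]
These lie in $L_1(\mathcal M)$, with $A,B\ge0$. Consider the $2\times2$ block operator
\[
X=\int_0^1 \begin{pmatrix} a^*(x)\\ b^*(x)\end{pmatrix}\begin{pmatrix} a(x) & b(x)\end{pmatrix}d\mu(x)=\begin{pmatrix} A & C\\ C^* & B\end{pmatrix}.
\]
This is a positive element of $L_1(M_2(\mathcal M))$, since it is an integral of positive operators $v(x)^*v(x)$ with $v(x)=(a(x)\ b(x))$. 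The integrals are understood in the weak sense: pair against $\mathcal M$ and use $\varphi=\int\otimes\tau$ together with Fubini for $a,b\in L_2(\mathcal N)$.

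The key step is the standard fact about positive block matrices. If $\begin{pmatrix}A&C\\C^*&B\end{pmatrix}\ge0$ with $A,B$ positive $\tau$-measurable, then $C=A^{1/2}uB^{1/2}$ for some contraction $u\in\mathcal M$. I would first prove this for bounded $A,B$ by replacing $A,B$ with $A+\varepsilon,B+\varepsilon$. Conjugating by $\mathrm{diag}((A+\varepsilon)^{-1/2},(B+\varepsilon)^{-1/2})$ gives $\begin{pmatrix}1&u_\varepsilon\\u_\varepsilon^*&1\end{pmatrix}\ge0$, where $u_\varepsilon=(A+\varepsilon)^{-1/2}C(B+\varepsilon)^{-1/2}$. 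Positivity of this matrix forces $\|u_\varepsilon\|\le1$. Then take a weak$^*$ cluster point $u$ of $(u_\varepsilon)$ in the unit ball of $\mathcal M$. Since $(A+\varepsilon)^{1/2}\to A^{1/2}$ in $L_2(\mathcal M)$ (in measure if unbounded), we get $C=A^{1/2}uB^{1/2}$.

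For general $A,B\in L_1^+(\mathcal M)$, the same argument works with $\tau$-measurable operators, with care that $u_\varepsilon$ is bounded. Alternatively, one can avoid the limit: the Douglas-type factorization gives a contraction $w$ with $C=A^{1/2}w$ supported appropriately, and the symmetric argument then produces $u$.

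An alternative cleaner route is to let $P$ and $Q$ be the support projections of $A$ and $B$. Define $u$ on the ranges by $u\,(B^{1/2}\xi)=$ the element determined by $\langle A^{1/2}\eta,\,u B^{1/2}\xi\rangle=\langle \eta, C\xi\rangle$. The bound $|\langle\eta,C\xi\rangle|\le\|A^{1/2}\eta\|\,\|B^{1/2}\xi\|$ follows from $X\ge0$, or directly from Cauchy--Schwarz pointwise in $x$ and then in $L_2(d\mu)$: indeed $\langle\eta,C\xi\rangle=\int\langle a(x)\eta,b(x)\xi\rangle d\mu$. This defines a bounded sesquilinear form of norm $\le1$, hence a contraction $u$, which commutes with $\mathcal M'$ by uniqueness and so lies in $\mathcal M$.

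The main obstacle is the unbounded setting: $a(x)\eta$ makes sense only on a core, and $A^{1/2}$ is only $\tau$-measurable. I would handle this by first treating $a,b\in L_2(\mathcal N)\cap\mathcal N$, where everything is bounded and the sesquilinear-form argument is rigorous. The general case then follows by approximation, using the weak$^*$ compactness of the unit ball of $\mathcal M$ and the continuity of $a\mapsto(\int a^*a)^{1/2}$ from $L_2(\mathcal N)$ to $L_2(\mathcal M)$.
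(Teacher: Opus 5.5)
Your proof is correct. The paper does not prove this lemma but imports it from Mei's Proposition~1.1, and your argument is the standard proof of that result: you use positivity of $\int_0^1 v^*v\,d\mu=\begin{pmatrix}A&C\\ C^*&B\end{pmatrix}$ with $v=(a\ \ b)$, then factor the off-diagonal corner of a positive $2\times2$ operator matrix through a contraction, either by $\varepsilon$-regularization or via the Douglas-type sesquilinear form. The only slip is minor: if $\tau(\mathbf 1)=\infty$ then $(A+\varepsilon)^{1/2}\notin L_2(\M)$, so in the limit step use $\|(A+\varepsilon)^{1/2}-A^{1/2}\|_{L_\infty(\M)}\le\sqrt{\varepsilon}$ instead (this holds for unbounded $A$ as well), or rely on your reduction to bounded $a,b$ together with the Powers--St\o rmer inequality $\|A^{1/2}-A'^{1/2}\|_{L_2(\M)}^2\le\|A-A'\|_{L_1(\M)}$.
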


\subsection{A modified  noncommutative Calder\'{o}n-Zygmund decomposition}\label{3.2}
We are now in a position to establish the following new Calder\'{o}n-Zygmund decomposition adapted to the regular filtration constructed above.

\begin{theorem}\label{NewCZ} \rm
  Let $f\in L_1^+(\mathcal{N})$, $\lambda>0$ and  $n\in\mathbb{N}$. Write $n=\Sigma_{k=0}^{\infty} \alpha_{k} M_{k}$ with $0 \leqslant \alpha_{k}<m_{k}.$
  Let $
  (q_{k+1,\ell})_{k\ge-1,\ 1\le \ell\le \lceil\log_2 m_{k}\rceil}$ and 
 $ (p_{k+1,\ell})_{k\ge -1,\ 1\le \ell\le \lceil\log_2 m_{k}\rceil}$  be the two
 sequences of projections appearing in \eqref{qnlQ} and \eqref{pnlQ} associated with $f$ and $\lambda$, with the convention that $m_{-1}=2$. Then there  exists a projection $\zeta\in \mathcal{P}(\N)$, defined by 
\begin{equation}\label{zeta}
\zeta=\big(\bigvee_{Q\in F}p_Q\chi_{3Q}\big)^{\perp},
\end{equation}
{where \(F=\bigcup_{k\geq0}\ \bigcup_{\ell=1}^{\lceil\log_2 m_{k-1}\rceil} F_{k,\ell}\),  $F_{k,\ell}$ denote the collection of all atoms of $\mathcal{F}_{k,\ell}$,  }and $3Q$ denotes  the $3$-fold concentric dilation of $Q$, and a decomposition of $f$,
	\begin{equation}\label{dec}
		f=g_{\mathrm{d}}+g_{\mathrm{off}}+b_{d}+b_{\mathrm{off}},
	\end{equation}
such that the following assertions hold.
	\begin{enumerate}[\rm (i)]
        \item $\lambda\varphi(\mathbf{1}-\zeta)\leq\varphi((\mathbf{1}-\zeta)f)\leq c\|f\|_{L_1(\mathcal{N})}$, recall that $\varphi = \int \otimes \tau$.
        
\item $g_{\mathrm{d}}=qfq+\sum_{k\geq -1}\sum_{\ell=1}^{\lceil\log_2 m_{k}\rceil}\sum_{Q\in F_{k+1,\ell}} p_QA_Q(f)p_Q\chi_Q$; moreover, $\|g_{\mathrm{d}}\|_{L_1(\mathcal{N})}\leq c \|f\|_{L_1(\mathcal{N})}$ and $\|g_{\mathrm{d}}\|_{L_{\infty}(\mathcal{N})}\leq c\lambda$. Here, for each $Q\in F_{k+1,\ell}$, the function $A_Q(f):Q\to L_1(\mathcal{M})$ is defined as follows
		\begin{equation}\label{AQ1}
			A_Q(f)(x)=\frac{1}{|Q|}\int_Qf(y) K_{A_Q}(x,y)d\mu(y),
		\end{equation}
where we set $K_{A_Q}(x,y)=1$ if $Q\in F_{k+1}'$ or $\alpha_k=0$, and otherwise
\begin{equation}\label{AQ}
	K_{A_Q}(x,y)=\frac{\frac{1}{|Q|}\int_Q\big(r_k^{\alpha_k}(y)-r_k^{\alpha_k}(t)\big)\big(r_k^{-\alpha_k}(x)-r_k^{-\alpha_k}(t)\big)d\mu(t)}{1-\big|\frac{1}{|Q|} \int_{Q} r_k^{\alpha_{k}} (x)d \mu(x)\big|^{2}}.
\end{equation}

       \item $g_{\mathrm{off}}=\sum_{k\geq -1}\sum_{\ell=1}^{\lceil\log_2 m_{k}\rceil}\sum_{Q\in F_{k+1,\ell}} (p_QA_Q(f)q_Q\chi_Q+q_QA_Q(f)p_Q\chi_Q)$, and 
       \begin{equation}\label{goff-2-lambda f}
       	\|g_{\mathrm{off}}\|_{L_{2}(\mathcal{N})}^2\leq c\lambda\|f\|_{L_1(\mathcal{N})}.
       \end{equation}
       
		\item $b_{\mathrm{d}}=\sum_{k\geq -1}\sum_{\ell=1}^{\lceil\log_2 m_{k}\rceil}\sum_{Q\in F_{k+1,\ell}} b_{\mathrm{d}}^{k+1,\ell,Q}$, where
\begin{equation}\label{bdk+1lQ}
b_{\mathrm{d}}^{k+1,\ell,Q}=p_Q(f-A_Q(f))p_Q\chi_Q
\end{equation}
satisfies the cancellation conditions: for each $Q\in F_{k+1,\ell}$,
\begin{equation}\label{bd0}
\int_Qb_{\mathrm{d}}^{k+1,\ell,Q}(x)d\mu(x)=0\quad\mbox{and }\quad \int_Qb_{\mathrm{d}}^{k+1,\ell,Q}(x)r_k^{\alpha_k}(x)d\mu(x)=0;
\end{equation}
furthermore,
 $$\sum_{k\geq -1}\sum_{\ell=1}^{\lceil\log_2 m_{k}\rceil}\sum_{Q\in F_{k+1,\ell}}\|b_{\mathrm{d}}^{k+1,\ell,Q}\|_{L_1(\mathcal{N})}
       \leq c\|f\|_{L_1(\mathcal{N})}.$$

\item $b_{\mathrm{off}}=\sum_{k\geq -1}\sum_{\ell=1}^{\lceil\log_2 m_{k}\rceil}\sum_{Q\in F_{k+1,\ell}}b_{\mathrm{off}}^{k+1,\ell,Q}$, where
\begin{equation}\label{boffk+1lQ}
b_{\mathrm{off}}^{k+1,\ell,Q}=p_Q(f-A_Q(f))q_Q\chi_Q+q_Q(f-A_Q(f))p_Q\chi_Q
\end{equation} 
satisfies the cancellation conditions: for each $Q\in F_{k+1,\ell}$,
\begin{equation}\label{boff0}
\int_Qb_{\mathrm{off}}^{k+1,\ell,Q}(x)d\mu(x)=0\quad \mbox{and}\quad \int_Qb_{\mathrm{off}}^{k+1,\ell,Q}(x)r_k^{\alpha_k}(x)d\mu(x)=0.
\end{equation}
\end{enumerate}
\end{theorem}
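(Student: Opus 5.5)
Our plan follows the Cadilhac--Conde-Alonso--Parcet scheme, now run on the regular filtration $(\mathcal N_{k,\ell})$ of Lemma~\ref{regular} with $R_{\rm reg}\le 3$. The starting point is the identity $\mathbf 1=q+\sum p_Q\chi_Q$ from \eqref{1-q}. On each atom $Q\in F_{k+1,\ell}$ the projections $p_Q$ and $q_Q$ are orthogonal constants with $p_Q+q_Q=q_{\widehat Q}$. This lets us write
$$f=qfq+\sum_Q\big(p_Qfp_Q+p_Qfq_Q+q_Qfp_Q\big)\chi_Q ,$$
which is the usual ``pairing by the first index at which one side leaves'' expansion. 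Checking it amounts to expanding $(q+\sum p)f(q+\sum p)$. Each cross term $p_Qfp_{Q'}$ with $Q'\subsetneq Q$ is absorbed into the $p_Qfq_Q$ term, because $p_{Q'}\le q_Q$ there.

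In each summand we add and subtract $A_Q(f)$. This defines $g_{\rm d},g_{\rm off},b_{\rm d},b_{\rm off}$ exactly as in (ii)--(v), and \eqref{dec} then holds by construction. For (i), the bound $\varphi(\mathbf 1-\zeta)\le\sum_Q 3|Q|\tau(p_Q)$ gives $\lambda\varphi(\mathbf 1-\zeta)\le 3\lambda\varphi(\mathbf 1-q)\le 3\|f\|_1$ by Lemma~\ref{Cuculescu}(iv). Here we interpret $3Q$ in the Vilenkin sense, so that $|3Q|\le 3|Q|$.

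The cancellation properties \eqref{bd0} and \eqref{boff0} come from the choice of kernel. We check that the operator $h\mapsto A_Q(h)$ restricted to $Q$ is the orthogonal projection in $L_2(Q)$ onto $\mathrm{span}\{1,r_k^{\alpha_k}\}$. Indeed, $\frac{1}{|Q|}\int_Q h$ plus the normalized component along $r_k^{\alpha_k}-\langle r_k^{\alpha_k}\rangle_Q$ produces exactly $1+K_{A_Q}$ after expansion. The denominator $1-|\langle r_k^{\alpha_k}\rangle_Q|^2$ is the squared norm of $r_k^{\alpha_k}-\langle r_k^{\alpha_k}\rangle_Q$ in normalized $L_2(Q)$. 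It is nonzero because $Q\notin F'_{k+1}$ and $\alpha_k\neq0$, so $r_k^{\alpha_k}$ is nonconstant on $Q$. If $Q\in F'_{k+1}$ or $\alpha_k=0$, then $r_k^{\alpha_k}$ is constant on $Q$, and the projection onto constants suffices. Consequently $f-A_Q(f)$ is orthogonal to $1$ and $r_k^{-\alpha_k}$ on $Q$, and multiplying by the constant projections $p_Q,q_Q$ preserves these orthogonalities.

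The size estimates rest on controlling $A_Q(f)$.

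\textbf{$L_1$ bounds.} We first aim for $|K_{A_Q}|\le C$ uniformly. For this we must show $|\langle r_k^{\alpha_k}\rangle_Q|\le \theta<1$ uniformly. Here the construction enters: $Q$ is a union of a contiguous block of at least two atoms of $F'_{k+1}$ inside one atom of $F'_k$, on which $r_k$ takes the distinct values $e^{2\pi i j/m_k}$. We expect the arcs produced by the halving procedure to keep the average of $e^{2\pi i\alpha_k j/m_k}$ over the block bounded away from modulus $1$. We flag this as delicate: for a block of two consecutive $j$ and $\alpha_k$ small relative to $m_k$, the modulus tends to $1$. So we would rather not rely on a kernel bound. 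Instead we use the identity $\mathrm{Tr}_Q=\frac{1}{|Q|}\int_Q(\cdot)$ together with positivity. Since $A_Q$ is an orthogonal projection, $p_QA_Q(f)p_Q$ has trace at most that of $p_Qfp_Q$ (projections contract the $L_2$ norm, and we use duality for $L_1$ with positive $f$). Summing then gives $\|b_{\rm d}\|$ and $\|g_{\rm d}\|_1\lesssim\|f\|_1$.

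\textbf{$L_\infty$ bound for $g_{\rm d}$.} We compare $p_QA_Q(f)p_Q$ with $p_Q f_{Q'}p_Q$ for the Vilenkin children $Q'$ of $Q$. This is where the finer filtration and regularity give $p_Qf_Qp_Q\le R_{\rm reg}\,p_Q f_{\widehat Q}p_Q\le 3\lambda$. For the non-constant part we use Lemma~\ref{AcB} to factor $\int_Q f^{1/2}\cdot f^{1/2}K$, which yields a bound $\lesssim \|p_Qf_Qp_Q\|$.

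\textbf{$L_2$ bound for $g_{\rm off}$.} By orthogonality over disjoint atoms, $\|g_{\rm off}\|_2^2\lesssim\sum_Q|Q|\,\tau(q_QA_Q(f)p_QA_Q(f)q_Q)$. Writing $A_Q(f)=\langle f\rangle_Q+\beta_Q r$ with $\beta_Q\in L_1(\mathcal M)$, the constant part is handled by the CCP argument: $q_Qf_Qq_Q\le\lambda$ and $\sum_Q |Q|\tau(p_Qf_Q)\le\|f\|_1$. For the $\beta_Q$ part we apply Lemma~\ref{AcB} with $a=f^{1/2}$ and $b=f^{1/2}(\bar r-\langle\bar r\rangle)$. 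This gives $|\beta_Q|^2$-type bounds dominated by $\|q_Qf_Qq_Q\|\,p_Qf_Qp_Q$ divided by the denominator. This step, together with the uniform lower bound on $1-|\langle r_k^{\alpha_k}\rangle_Q|^2$, is the main obstacle we expect.
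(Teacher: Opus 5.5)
\textbf{There is a genuine gap: none of the size estimates in (ii)--(iv) is established.}

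Your decomposition, the proof of (i), and the cancellation argument are sound and agree with the paper. Viewing $A_Q$ as the $L_2(Q)$-orthogonal projection onto $\mathrm{span}\{1,r_k^{-\alpha_k}\}$ is a cleaner route to \eqref{bd0} and \eqref{boff0} than the paper's direct computation. Note the conjugate here: correspondingly $A_Q(f)=\langle f\rangle_Q+\beta_Q\,(r_k^{-\alpha_k}-\langle r_k^{-\alpha_k}\rangle_Q)$.

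The paper's indispensable ingredient is Lemma~\ref{KAQ}, the uniform bound $|K_{A_Q}(x,y)|\le c$. Your substitutes for it do not work.

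\emph{$L_1$ bounds in (ii) and (iv).} The identity $\varphi(p_QA_Q(f)p_Q\chi_Q)=\varphi(p_Qfp_Q\chi_Q)$ controls only a trace, not $\|p_QA_Q(f)p_Q\chi_Q\|_{L_1(\mathcal N)}$, because $A_Q(f)$ is neither positive nor self-adjoint (the kernel is complex). Moreover, $L_2$-contractivity of an orthogonal projection gives no $L_1$ bound at all. A rank-two projection onto $\mathrm{span}\{1,h\}$ with $h$ spiky has large $L_1\to L_1$ norm even on positive inputs.

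\emph{$L_\infty$ bound.} Your factorization via Lemma~\ref{AcB} yields $\frac1{|Q|}\|\int_Qp_Qfp_Q\|^{1/2}\,\|\int_Qp_Qf(y)p_Q|K_{A_Q}(x,y)|^2d\mu(y)\|^{1/2}$. Since $f$ may concentrate where $|K_{A_Q}(x,\cdot)|$ is largest, concluding $\lesssim\|p_Qf_Qp_Q\|$ requires exactly $\sup|K_{A_Q}|\lesssim 1$.

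\emph{$L_2$ bound for $g_{\rm off}$.} In the $\beta_Q$-part you end up dividing by $1-|\langle r_k^{\alpha_k}\rangle_Q|^2$ and need a uniform lower bound on it. As you correctly observed yourself, such a bound is false (take $b=2$ atoms with $\alpha_k/m_k\to 0$).

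\textbf{The missing idea.} The numerator of $K_{A_Q}$ degenerates at the same rate as the denominator. Let $b=|Q|M_{k+1}\ge 2$ be the number of $F'_{k+1}$-atoms in $Q$. Let $\theta=2\pi\alpha_k/m_k$, shifted by $2\pi$ if necessary so that $|\theta|\le\pi$; this does not change $r_k^{\alpha_k}$, since the digits $x_k$ are integers.

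\emph{Numerator.} For $y,t\in Q$ one has $|r_k^{\alpha_k}(y)-r_k^{\alpha_k}(t)|\le\min\{2,b|\theta|\}$, so the numerator of $K_{A_Q}$ is at most $\min\{4,(b\theta)^2\}$.

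\emph{Denominator.} One has $1-|\langle r_k^{\alpha_k}\rangle_Q|^2=1-R_b(\theta)^2$ with $R_b(\theta)=|\sin(b\theta/2)/(b\sin(\theta/2))|$.
\begin{itemize}
\item For $b|\theta|$ small, $1-R_b(\theta)^2=\tfrac{(b^2-1)\theta^2}{12}\bigl(1+O(b^2\theta^2)\bigr)$, which is comparable to $(b\theta)^2$ because $b\ge 2$.
\item For $b|\theta|\ge\pi/10$, one has $R_b(\theta)\le\cos(\pi/40)$.
\end{itemize}

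Hence $|K_{A_Q}|\le c$ uniformly. Equivalently, $\sup_{Q}|r_k^{\alpha_k}-\langle r_k^{\alpha_k}\rangle_Q|^2\lesssim 1-|\langle r_k^{\alpha_k}\rangle_Q|^2$, a sup-versus-variance comparison for equally spaced points on the circle.

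Once this bound is available, you get $-cf_Q\le\mathrm{Re}\,A_Q(f)\le cf_Q$ and likewise for $\mathrm{Im}\,A_Q(f)$ on $Q$. The rest of your outline then goes through as in the paper. For the $L_\infty$ bound this means using regularity, $f_Q\le 3f_{\widehat Q}$, together with $p_Q\le q_{\widehat Q}$. For $g_{\rm off}$ it means the Cuculescu/Cauchy--Schwarz argument.
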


\begin{remark}\label{AQ-cho-b} \begin{enumerate}[\rm (i)] 
\item  A key feature of our Calder\'{o}n-Zygmund decomposition is the specific choice of  $A_Q(f)$. This choice   ensures that the resulting bad terms satisfy the reinforced cancellation properties \eqref{bd0} and \eqref{boff0}: on $Q$, besides being orthogonal to constant functions, $b^{k+1,\ell,Q}_d$ is also orthogonal  to the relevant frequency $r_k^{\alpha_k}$.  The choice of $A_Q(f)$ in \eqref{AQ1} and \eqref{AQ} is precisely intended to ensure these orthogonality conditions. This plays an essential role in the proof.  In particular, substituting $b_{\mathrm d}$ and $b_{\mathrm{off}}$ into the decomposition \eqref{S-alpha-H}, the contributions of the first three terms vanish, and the estimates reduce to controlling the terms involving $H_k$; one may find the the details in the arguments for Lemma \ref{bd-w11} and Lemma \ref{boff-w11}.

\item In contrast with  $g_{\mathrm{d}}$, we cannot directly obtain the  \(L_\infty(\mathcal N)\)-estimate for \(g_{\mathrm{off}}\).  But we are able to establish   the $L_2(\mathcal{N})$-estimate \eqref{goff-2-lambda f},
 which is sufficient for the proofs of Theorem \ref{Npb} and \ref{weak-main}.
\end{enumerate}
\end{remark}

{The following lemma is needed for the later proof of Theorem \ref{NewCZ}.}
\begin{lemma}\label{KAQ} Let $Q\in F_{k+1,\ell}$, where $k\geq -1$, $1\leq \ell\leq\lceil\log_2 m_{k}\rceil$. Then there exists a constant $c>0$, independent of $Q$, $k$ and $\ell$, such that 
	$$|K_{A_Q}(x,y)|\leq c,\quad x,y\in Q,$$
{where \(K_{A_Q}\) is given in \eqref{AQ}.}
\end{lemma}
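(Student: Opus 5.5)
The plan is to bound the numerator and the denominator of \eqref{AQ} separately, for $Q\in F_{k+1,\ell}$ with $Q\notin F_{k+1}'$ and $\alpha_k\neq0$; in the remaining cases $K_{A_Q}=1$ and there is nothing to prove. The numerator is easy. Since $|r_k^{\pm\alpha_k}|=1$, each factor $r_k^{\alpha_k}(y)-r_k^{\alpha_k}(t)$ and $r_k^{-\alpha_k}(x)-r_k^{-\alpha_k}(t)$ has modulus at most $2$. Hence the numerator is bounded by $4$ for all $x,y\in Q$. The whole task is therefore a uniform lower bound
\[
1-\Big|\frac{1}{|Q|}\int_Q r_k^{\alpha_k}\,d\mu\Big|^2\ \ge\ c_0>0,
\]
with $c_0$ independent of $k,\ell,Q,\alpha_k$.

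To get it, I use the structure of the construction. Every $Q\in F_{k+1,\ell}$ lies in a single atom $J\in F_k'$ of length $M_k^{-1}$. Inside $J$, $Q$ is a union of $N\ge2$ consecutive atoms of $\mathcal F'_{k+1}$, where $N\geq 2$ because $Q\notin F'_{k+1}$. On the $j$-th atom of $J$, $0\le j<m_k$, the function $r_k$ is constant and equal to $\omega^{j}$, with $\omega=e^{2\pi \mathrm i/m_k}$. So the average equals
\[
\frac1N\sum_{j=a}^{a+N-1}\omega^{\alpha_k j},
\]
and its modulus is
\[
\frac{|\sin(\pi N\alpha_k/m_k)|}{N|\sin(\pi\alpha_k/m_k)|},
\]
with $1\le\alpha_k\le m_k-1$.

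The key step is to show this is at most some $\rho<1$ uniformly. Two facts from the construction matter.
\begin{enumerate}[(a)]
\item $N\ge2$.
\item At each splitting step the piece sizes stay nearly balanced, so the number $N$ of atoms at a given step is roughly $m_k/2^{\ell}$. In particular $N\le \lceil m_k/2\rceil$ for $\ell\ge1$, because already at the first step the two children have $\lfloor m_k/2\rfloor$ and $\lceil m_k/2\rceil$ atoms.
\end{enumerate}
Set $\theta=\pi\alpha_k/m_k\in(0,\pi)$. By the symmetry $\alpha_k\mapsto m_k-\alpha_k$ the modulus is unchanged, so we may assume $\theta\le\pi/2$. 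Consider the normalized Fejér-type ratio $F_N(\theta)=|\sin N\theta|/(N\sin\theta)$. It equals $1$ only at $\theta=0$ (and at $\theta=\pi$). We have $\theta\ge \pi/m_k$ and $N\ge2$. Since $N\theta\ge 2\pi/m_k$ can be tiny when $m_k$ is large, I split into two regimes.
\begin{enumerate}[(a)]
\item If $N\theta\le\pi$, then by concavity of $\sin$ we get $F_N(\theta)\le 1-c(N\theta)^2$ up to constants, which is not uniform in general. So here I must use the upper bound $N\lesssim m_k/2$ together with $\theta\ge\pi/m_k$. These alone do not give $N\theta$ bounded below.
\item In the other regime $N\theta$ is not small, and $F_N(\theta)$ is clearly bounded away from $1$.
\end{enumerate}

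This is the main obstacle. For $\alpha_k=1$, $N=2$ and large $m_k$, the average of two adjacent roots of unity is $|\cos(\pi/m_k)|\to1$. So the denominator is not uniformly bounded below, and the numerator must be used too. The refined plan is therefore to bound the quotient directly.

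Write $\beta=\frac1{|Q|}\int_Q r_k^{\alpha_k}\,d\mu$. The numerator equals
\[
r_k^{\alpha_k}(y)r_k^{-\alpha_k}(x)-\beta\, r_k^{-\alpha_k}(x)-\bar\beta\, r_k^{\alpha_k}(y)+1,
\]
using $\frac1{|Q|}\int_Q |r_k^{\alpha_k}|^2\,d\mu=1$. This is $\big(u(y)-\beta\big)\overline{\big(u(x)-\beta\big)}+(1-|\beta|^2)$ with $u=r_k^{\alpha_k}$. So
\[
K_{A_Q}(x,y)=1+\frac{(u(y)-\beta)\overline{(u(x)-\beta)}}{1-|\beta|^2},
\]
and it suffices to show $|u(z)-\beta|^2\le c\,(1-|\beta|^2)$ for $z\in Q$. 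Now
\[
1-|\beta|^2=\frac{1}{|Q|}\int_Q|u-\beta|^2\,d\mu,
\]
the variance of $u$ on $Q$. The values of $u$ are the points $\omega^{\alpha_k j}$ on an arc, taken with equal weights $1/N$. The variance of equally weighted points on an arc is comparable to the squared distance of any single point to the mean: the maximal deviation is at most about twice the spread, and the variance is at least about the spread squared over a constant, since $N\ge2$ points are equally spaced along the arc. I would check this by a direct computation with $u_j=\omega^{\alpha_k j}$. The variance equals $1-F_N(\theta)^2$, while $\max_j|u_j-\beta|^2\le (1+F_N)^2-4F_N\cos^2(\dots)$. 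Comparing the Taylor expansions in $\theta$ for small $N\theta$ gives both quantities of order $N^2\theta^2$, and for large $N\theta$ both are of order $1$. This yields $|K_{A_Q}|\le 1+c$ uniformly.
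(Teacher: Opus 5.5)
Your refined argument is correct and is essentially the paper's proof. The paper also bounds the numerator of \eqref{AQ} by $\min\{4,(b\theta)^2\}$; in its notation $b=N$ and $\theta=2\pi\alpha_k/m_k$, which is twice your $\theta$. It then shows $1-R_b(\theta)^2\gtrsim\min\{1,(b\theta)^2\}$ in two regimes, just as you do. When $b|\theta|\le\pi/10$ it uses a Taylor expansion, where $b\ge2$ keeps the factor $1-b^{-2}$ away from zero. Otherwise it proves $R_b(\theta)\le\cos(\pi/40)$ via $\sin u\ge 2u/\pi$ and the monotonicity of $\sin x/x$ on $(0,\pi]$; this is exactly the step you dismiss as ``clearly bounded away from $1$'', and you should write it out. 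Your identity $K_{A_Q}(x,y)=1+\frac{(u(y)-\beta)\overline{(u(x)-\beta)}}{1-|\beta|^2}$ is a cosmetic repackaging that leads to the same two-regime estimate, though it usefully exhibits $A_Q$ as the orthogonal projection in $L_2(Q)$ onto $\mathrm{span}\{1,r_k^{-\alpha_k}\}$, which explains the cancellations \eqref{bd0}.
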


\begin{proof} If $Q\in F_{k+1}'$ or $\alpha_k=0$,  then
	$$K_{A_Q}(x,y)=1, \quad x,y\in Q.$$
	It therefore remains to consider the nontrivial case  $Q\notin\ F_{k+1}'$ and $\alpha_k\neq 0$.  Set $$\theta=\frac{2\pi\alpha_k}{m_k},\quad-2\pi\leq \theta\leq 2\pi.$$
	 We claim that 
	\begin{equation}\label{aq-c1}
	|r_k^{\alpha_k}(x)-r_k^{\alpha_k}(y)|
	\le
	\min\{2,b|\theta|\},\quad x,y\in Q.
	\end{equation}
	Indeed, we first have the trivial bound
	$$|r_k^{\alpha_k}(x)-r_k^{\alpha_k}(y)|\leq 2.$$
	On the other hand, we recall that  every $x,y\in[0,1)$ admit the expansions as in \eqref{x-expre}, namely
	$$x=\sum_{j=0}^\infty \frac{x_j}{M_{j+1}} ,\quad y=\sum_{j=0}^\infty\frac{y_j}{M_{j+1}}, \quad0\leq x_j, y_j < m_j,\quad x_j, y_j \in \mathbb{N}.$$
	Without loss of generality, we assume that $Q=\Big[\frac{a}{M_{k+1}},\frac{a+b}{M_{k+1}}\Big),$ where $a,b \in \mathbb{N}$ and $b=|Q|M_{k+1}\geq2$, since  $Q\notin F_{k+1}'$.  Then, for $x,y\in Q$, 
	\[
	|r_k^{\alpha_k}(x)-r_k^{\alpha_k}(y)|
	=
	\bigl|e^{i\theta x_k}-e^{i\theta y_k}\bigr|
	\le|\theta|\,|x_k-y_k|
	\le b|\theta|.
	\]
This proves the claim.
	
A straightforward calculation gives
\begin{equation}\label{aq-c2}
\begin{aligned}
	\Big| \frac{1}{|Q|} \int_{Q} r_k^{\alpha_k} (x)\, d\mu(x) \Big|&=\Big|\frac{1}{|Q|}\sum_{\ell=a}^{a+b-1}\int_{\frac{\ell}{M_{k+1}}}^{\frac{\ell+1}{M_{k+1}}}\exp(i\theta x_k)d\mu(x)\Big|\\
	&=\Big|\frac{1}{|Q|M_{k+1}}\sum_{\ell=a}^{a+b-1}\exp(i\theta \ell)\Big|\\
	&= \Big|\frac{1-\exp(ib\theta)}{b(1-\exp(i\theta))}\Big|=\Big|\frac{\sin(b\theta/2)}{b\sin(\theta/2)}\Big|=:	R_b(\theta).
\end{aligned}
\end{equation}
Recalling the definition of $K_{A_Q}(x,y)$ in \eqref{AQ}, and using  \eqref{aq-c1} and \eqref{aq-c2}, we have
\begin{equation}\label{KAQ-sin}
	\begin{aligned}
		|K_{A_Q}(x,y)| &=\Big|\frac{\frac{1}{|Q|}\int_Q\big(r_k^{\alpha_k}(y)-r_k^{\alpha_k}(t)\big)\big(r_k^{-\alpha_k}(x)-r_k^{-\alpha_k}(t)\big)d\mu(t)}{1-\big|\frac{1}{|Q|} \int_{Q} r_k^{\alpha_{k}}(x)d \mu(x)\big|^{2}}\Big|\\
		&\leq\min\{4,(b\theta)^2\}\,\bigl(1-R_b(\theta)^2\bigr)^{-1}.
	\end{aligned}
\end{equation}

It suffices to consider $|\theta|\leq\pi$,  the remaining case follows similarly.

\smallskip
\noindent\textbf{Case 1: $b|\theta|\leq\frac{\pi}{10}$.} By the Taylor expansion $\sin^2x=x^2-\frac{x^4}{3}+O(x^6)$ and the fact that $b\geq2$, we obtain
\begin{align*}
	|K_{A_Q}(x,y)| &\leq (b\theta)^2\times \Big[\frac{b^2[\frac{\theta^2}{4}-\frac{\theta^4}{48}+O(\theta^6)]-[\frac{b^2\theta^2}{4}-\frac{b^4\theta^4}{48}+O(b^6\theta^6)]}{\frac{b^2\theta^2}{4}+O(b^2\theta^4)}\Big]^{-1}\\
	&=(b\theta)^2\times \Big(\frac{\frac{b^2\theta^2}{4}+O(b^2\theta^4)}{\frac{b^4\theta^4}{48}(1-\frac{1}{b^2})+O(b^6\theta^6)}
	\Big)\\
	&=\frac{\frac{1}{4}+O(\theta^2)}{\frac{1}{48}(1-\frac{1}{b^2})+O(b^2\theta^2)}\leq \frac{\frac{1}{4}+\pi^2}{\frac{1}{48}(1-\frac{1}{4})}=16+64\pi^2.
\end{align*}
	
\smallskip
\noindent\textbf{Case 2: $b|\theta|\geq\frac{\pi}{10}$.}
	It suffices to show that there exists a constant $0<\delta<1$ such that
	$$R_b(\theta)<\delta.$$
	Since $R_b$ is an even function, we only need to consider $\theta\in[0,\pi]$. Set
	\[
	u:=\frac{\theta}{2}\in\Big(0,\frac{\pi}{2}\Big],\qquad t:=bu=\frac{b\theta}{2}.
	\]
	Then $t\ge \frac{\pi}{20}$, and 
	\[
	R_b(\theta)=\Big|\frac{\sin t}{b\sin u}\Big|=\Big|\frac{\sin t}{t}\cdot \frac{u}{\sin u}\Big|.
	\]
	We consider two subcases:
	
\smallskip
\noindent\textbf{Subcase 2.1: $t\geq\pi$.}  For $u\in[0,\pi/2]$, the elementary inequality $\sin u\ge \frac{2}{\pi}u$ gives
	\[
	b\sin u \ge b\cdot \frac{2}{\pi}u=\frac{2}{\pi}t.
	\]
	Hence, 
	\[
	R_b(\theta)=\frac{|\sin t|}{b\sin u}\le \frac{1}{b\sin u}\le \frac{\pi}{2t}\le \frac12.
	\]
	
\smallskip
\noindent\textbf{Subcase 2.2: $\frac{\pi}{20}\le t\le \pi$.}
	Since $b\ge 2$, we have $u=t/b\le t/2\le \pi/2$.
	It is well known that the function $x\mapsto \frac{\sin x}{x}$ is decreasing on $(0,\pi]$.
	Therefore, from $0<u\le t/2$, we deduce 
	\[
	\frac{\sin u}{u}\ge \frac{\sin(t/2)}{t/2}
	\quad\Longrightarrow\quad
	\frac{u}{\sin u}\le \frac{t/2}{\sin(t/2)}.
	\]
	Consequently,
	\[
	R_b(\theta)
	=\Big|\frac{\sin t}{t}\cdot \frac{u}{\sin u}\Big|
	\le\Big| \frac{\sin t}{t}\cdot \frac{t/2}{\sin(t/2)}\Big|
	=\Big|\frac{\sin t}{2\sin(t/2)}\Big|
	=|\cos(t/2)|.
	\]
	Since $t/2\in[\pi/40,\pi/2]$, we have $|\cos(t/2)|=\cos(t/2)\le \cos(\pi/40)$.
	Thus
	\[
	R_b(\theta)\le \delta:=\cos(\pi/40)<1.
	\]
	Combining {\bf{Subcase 2.1}} and {\bf{Subcase 2.2}}, we obtain the uniform bound
	\[
	R_b(\theta)\le \delta=\cos(\pi/40)<1,
	\qquad\text{whenever } b|\theta|\ge \frac{\pi}{10},\ |\theta|\le \pi.
	\]
	Hence
	\[
	1-R(\theta)^2\ge 1-\delta^2=\sin^2(\pi/40).
	\]
	Recalling  \eqref{KAQ-sin}, we conclude  {\bf{Case 2}},
	\[
	|K_{A_Q}(x,y)|
	\le \min\{4,(b\theta)^2\}\,\bigl(1-R(\theta)^2\bigr)^{-1}
	\le 4\,\sin^{-2}(\pi/40).
	\]
	The proof is complete.
\end{proof}

\begin{proof} [Proof of Theorem \ref{NewCZ}]Let $f\in L_1^+(\mathcal{N})$ and $\lambda>0$.  Let $(q_{k+1,\ell})_{k\geq -1,1\leq \ell\leq\lceil\log_2 m_{k}\rceil}$ be the Cuculescu projections associated with $f$ and $\lambda$, and let   $q=\wedge_{k\geq-1}\wedge_{\ell=1}^{\lceil\log_2 m_{k}\rceil} q_{k+1,\ell}$.

\medskip
\noindent\textbf{Step 1: decomposition of $f$.} We begin with the elementary decomposition induced by the projection $q$:
	$$f=qfq+(1-q)fq+qf(1-q)+(1-q)f(1-q).$$
The term $qfq$ contributes to the good part, {while the remaining three terms will be expanded by
substituting the identity $ \eqref{1-q}$ into $(1-q)fq, qf(1-q)$ and $(1-q)f(1-q)$, } and we obtain
	\begin{align*}
		f&=qfq+\sum_{k\geq -1}\sum_{\ell=1}^{\lceil\log_2 m_{k}\rceil}\sum_{Q\in F_{k+1,\ell}}p_{Q}fq\chi_Q+\sum_{j\geq -1}\sum_{i=1}^{\lceil\log_2 m_{j}\rceil}\sum_{Q\in F_{j+1,i}}qfp_{Q}\chi_Q\\
		&\quad+\sum_{j,k\geq -1}\sum_{\ell=1}^{\lceil\log_2 m_{k}\rceil}\sum_{i=1}^{\lceil\log_2 m_{j}\rceil}\sum_{\substack{Q^1\in F_{k+1,\ell}\\Q^2\in F_{j+1,i}}}p_{Q^1}fp_{Q^2}\chi_{Q^1}\chi_{Q^2}.
	\end{align*}
Introducing $A_Q(f)$ as in \eqref{AQ}, we may further write
\begin{align*}
		f&=qfq+\sum_{k\geq -1}\sum_{\ell=1}^{\lceil\log_2 m_{k}\rceil}\sum_{Q\in F_{k+1,\ell}}p_{Q}A_Q(f)q\chi_Q+\sum_{j\geq -1}\sum_{i=1}^{\lceil\log_2 m_{j}\rceil}\sum_{Q\in F_{j+1,i}}qA_Q(f)p_{Q}\chi_Q\\
		&\quad+\sum_{j,k\geq -1}\sum_{\ell=1}^{\lceil\log_2 m_{k}\rceil}\sum_{i=1}^{\lceil\log_2 m_{j}\rceil}\sum_{\substack{Q^1\in F_{k+1,\ell}\\Q^2\in F_{j+1,i}}}p_{Q^1}A_{Q^1\wedge Q^2}(f)p_{Q^2}\chi_{Q^1}\chi_{Q^2}\\
		&\quad+\sum_{k\geq -1}\sum_{\ell=1}^{\lceil\log_2 m_{k}\rceil}\sum_{Q\in F_{k+1,\ell}}p_{Q}(f-A_Q(f))q\chi_Q+\sum_{j\geq -1}\sum_{i=1}^{\lceil\log_2 m_{j}\rceil}\sum_{Q\in F_{j+1,i}}q(f-A_Q(f))p_{Q}\chi_Q\\
		&\quad+\sum_{j,k\geq -1}\sum_{\ell=1}^{\lceil\log_2 m_{k}\rceil}\sum_{i=1}^{\lceil\log_2 m_{j}\rceil}\sum_{\substack{Q^1\in F_{k+1,\ell}\\Q^2\in F_{j+1,i}}}p_{Q^1}(f-A_{Q^1\wedge Q^2}(f))p_{Q^2}\chi_{Q^1}\chi_{Q^2}=:\sum_{n=1}^7f_n,
	\end{align*}
where we adopt the convention that $Q^1\wedge Q^2=Q^1$ if $\mathcal F_{k+1,\ell}\subseteq \mathcal F_{j+1,i}$, and $Q^1\wedge Q^2=Q^2$ otherwise. In the present setting,  the usual  commutation relations from the standard Cuculescu construction are unavailable, which gives rise to an additional off-diagonal good term  \(g_{\mathrm{off}}\). On the other hand, the monotonicity of the refined filtration implies that any two atoms are either nested or disjoint. This allows us to split the term \(f_4\) into one diagonal part and two off-diagonal parts:
	\begin{align*}
		f_4&=\sum_{j,k\geq -1}\sum_{\ell=1}^{\lceil\log_2 m_k\rceil}\sum_{i=1}^{\lceil\log_2 m_j\rceil}
		\sum_{\substack{Q^1\in\mathcal F_{k+1,\ell}\\ Q^2\in\mathcal F_{j+1,i}}}
		p_{Q^1}A_{Q^1\wedge Q^2}(f)p_{Q^2}\chi_{Q^1}\chi_{Q^2} \\
		&=\sum_{k\geq -1}\sum_{\ell=1}^{\lceil\log_2 m_k\rceil}\sum_{Q\in\mathcal F_{k+1,\ell}}
		p_QA_Q(f)p_Q\chi_Q \\
		&\quad
		+\sum_{k\geq -1}\sum_{\ell=1}^{\lceil\log_2 m_k\rceil}\sum_{Q\in\mathcal F_{k+1,\ell}}
		p_QA_Q(f)\Big(
		\sum_{i=\ell+1}^{\lceil\log_2 m_k\rceil}\sum_{Q^2\in\mathcal F_{k+1,i}} p_{Q^2}\chi_{Q^2}
		+\sum_{j>k}\sum_{i=1}^{\lceil\log_2 m_j\rceil}\sum_{Q^2\in\mathcal F_{j+1,i}} p_{Q^2}\chi_{Q^2}
		\Big)\chi_Q \\
		&\quad
		+\sum_{j\geq -1}\sum_{i=1}^{\lceil\log_2 m_j\rceil}\sum_{Q\in\mathcal F_{j+1,i}}
		\Big(
		\sum_{\ell=i+1}^{\lceil\log_2 m_j\rceil}\sum_{Q^1\in\mathcal F_{j+1,\ell}} p_{Q^1}\chi_{Q^1}
		+\sum_{k>j}\sum_{\ell=1}^{\lceil\log_2 m_k\rceil}\sum_{Q^1\in\mathcal F_{k+1,\ell}} p_{Q^1}\chi_{Q^1}
		\Big)A_Q(f)p_Q\chi_Q \\
		&=:f_{4,1}+f_{4,2}+f_{4,3},
	\end{align*}
The last two terms arise because the filtration  $(\mathcal{F}_{k+1,\ell})_{k\geq -1,\ 1\leq \ell\leq\lceil\log_2 m_k\rceil}$ is indexed by two parameters.  
We define the diagonal good part by
$$g_{\mathrm{d}}=qfq+f_{4,1}=qfq+\sum_{k\geq -1}\sum_{\ell=1}^{\lceil\log_2 m_{k}\rceil}\sum_{Q\in F_{k+1,\ell}}p_{Q}A_Q(f)p_Q\chi_Q.$$	
Moreover, by \eqref{qnlQ}-\eqref{1-q}, we have
\begin{equation}\label{f2+42}
	\begin{aligned}
		f_2+f_{4,2}
		&=
		\sum_{k\geq -1}\sum_{\ell=1}^{\lceil\log_2 m_k\rceil}\sum_{Q\in F_{k+1,\ell}}
		p_QA_Q(f)\Big(
		q
		+\sum_{i=\ell+1}^{\lceil\log_2 m_k\rceil}\sum_{Q^2\in F_{k+1,i}} p_{Q^2}\chi_{Q^2}\\
		&\hspace{4.8cm}
		+\sum_{j>k}\sum_{i=1}^{\lceil\log_2 m_j\rceil}\sum_{Q^2\in F_{j+1,i}} p_{Q^2}\chi_{Q^2}
		\Big)\chi_Q \\
		&=
		\sum_{k\geq -1}\sum_{\ell=1}^{\lceil\log_2 m_k\rceil}\sum_{Q\in F_{k+1,\ell}}
		p_QA_Q(f)q_Q\,\chi_Q,
	\end{aligned}
\end{equation}
and similarly,
$$f_3+f_{4,3}=\sum_{j\geq-1}\sum_{i=1}^{\lceil\log_2 m_{j}\rceil}\sum_{Q\in F_{j+1,i}}q_{Q}A_{Q}(f)p_Q\chi_{Q}.$$
Combining \eqref{f2+42} with the above identity, we obtain the off-diagonal good part
$$g_{\mathrm{off}}=f_2+f_{4,2}+f_3+f_{4,3}=\sum_{k\geq-1}\sum_{\ell=1}^{\lceil\log_2 m_{k}\rceil}\sum_{Q\in F_{k+1,\ell}}\Big(p_{Q}A_{Q}(f)q_Q\chi_{Q}+q_{Q}A_{Q}(f)p_Q\chi_{Q}\Big).$$
For the remaining terms $f_5,f_6,f_7$, we apply the same diagonal/off-diagonal splitting as above to \( f_7 \). This yields
$$b_{\mathrm{d}}=\sum_{k\geq -1}\sum_{\ell=1}^{\lceil\log_2 m_{k}\rceil}\sum_{Q\in F_{k+1,\ell}} p_Q(f-A_Q(f))p_Q\chi_Q$$
and
$$b_{\mathrm{off}}=\sum_{k\geq -1}\sum_{\ell=1}^{\lceil\log_2 m_{k}\rceil}\sum_{Q\in F_{k+1,\ell}}\Big(p_Q(f-A_Q(f))q_Q\chi_Q+q_Q(f-A_Q(f))p_Q\chi_Q\Big).$$
Collecting the preceding identities, we arrive at
$$f=g_{\mathrm{d}}+g_{\mathrm{off}}+b_{\mathrm{d}}+b_{\mathrm{off}},$$
which is exactly the decomposition in \eqref{dec}.

\medskip
\noindent\textbf{Step 2: proof of (i).}  From the definition of $\zeta$ in \eqref{zeta}, together with  \eqref{1-q}  and Lemma  \ref{Cuculescu}, we directly derive the first estimate
$$\lambda\varphi(\mathbf{1}-\zeta)\leq \lambda\sum_{Q\in F}3\varphi(p_Q\chi_Q)=3\lambda\sum_{k\geq -1}\sum_{\ell=1}^{\lceil\log_2 m_{k}\rceil}\varphi(p_{k+1,\ell})=3\lambda\varphi(1-q)\leq 3\|f\|_{L_1(\mathcal{N})}.$$

\medskip
\noindent\textbf{Step 3: proof of (ii).} Write  $A_Q(f)=\mathrm{Re}(A_Q(f))+i\mathrm{Im}(A_Q(f))$, where, for $x\in Q$,
\begin{equation}\label{real}
\mathrm{Re}(A_Q(f))(x)=\frac{1}{|Q|}\int_Qf(y) \mathrm{Re}(K_{A_Q}(x,y))d\mu(y)
\end{equation}
and
\begin{equation}\label{im}
	\mathrm{Im}(A_Q(f))(x)=\frac{1}{|Q|}\int_Qf(y) \mathrm{Im}(K_{A_Q}(x,y))d\mu(y).
\end{equation}

We first estimate \(\|g_{\mathrm d}\|_{L_1(\mathcal N)}\). Notice that $f$ is positive.   Combining \eqref{real}, \eqref{im} and Lemma \ref{KAQ}, we obtain
$$\|p_{Q}(\mathrm{Re}(A_Q(f))p_{Q}\chi_Q\|_{L_1(\mathcal{N})}+\|p_{Q}\mathrm{Im}(A_Q(f)))p_Q\chi_Q\|_{L_1(\mathcal{N})}\leq 2c\|p_{Q}f_{Q}p_Q\chi_Q\|_{L_1(\mathcal{N})}.$$
Consequently, by Lemma \ref{Cuculescu}(iv),  
\begin{align*}
\|g_{\mathrm{d}}\|_{L_1(\mathcal{N})}&=\|qfq+\sum_{k\geq -1}\sum_{\ell=1}^{\lceil\log_2 m_{k}\rceil}\sum_{Q\in F_{k+1,\ell}}p_{Q}(\mathrm{Re}(A_Q(f))+i\mathrm{Im}(A_Q(f)))p_Q\chi_Q\|_{L_1(\mathcal{N})}\\
&\leq \|qfq\|_{L_1(\mathcal{N})}+c\sum_{k\geq -1}\sum_{\ell=1}^{\lceil\log_2 m_{k}\rceil}\sum_{Q\in F_{k+1,\ell}}\|p_{Q}f_Qp_Q\chi_Q\|_{L_1(\mathcal{N})}\\
&=\varphi(qfq)+c\varphi((1-q)f)\leq c\|f\|_{L_1(\mathcal{N})}.
\end{align*}

For the estimate of $\|g_{\mathrm{d}}\|_{L_\infty(\mathcal{N})}$, the  projections \(q\) and $p_Q\chi_Q$ are disjoint,  hence 
$$\|g_{\mathrm{d}}\|_{L_\infty(\mathcal{N})}=\max\{\|qfq\|_{L_\infty(\mathcal{N})},\sup_{\substack{k\geq -1\\1\leq \ell\leq\lceil\log_2 m_{k}\rceil\\Q\in F_{k+1,\ell}}}\|p_{Q}A_Q(f)p_Q\chi_Q\|_{L_\infty(\mathcal{N})}\}.$$
Using \eqref{real}, \eqref{im} and Lemma \ref{KAQ} once more, we obtain
\begin{align*}
\sup_{\substack{k\geq -1\\1\leq \ell\leq\lceil\log_2 m_{k}\rceil\\Q\in F_{k+1,\ell}}}\|p_{Q}A_Q(f)p_Q\chi_Q\|_{L_\infty(\mathcal{N})}
	&\leq c\sup_{\substack{k\geq -1\\1\leq \ell\leq\lceil\log_2 m_{k}\rceil\\Q\in F_{k+1,\ell}}}\|p_{Q}f_Qp_Q\chi_Q\|_{L_\infty(\mathcal{N})}\\
	&\leq c \sup_{\substack{k\geq -1\\1\leq \ell\leq\lceil\log_2 m_{k}\rceil\\Q\in F_{k+1,\ell}}}\|p_{Q}f_{\widehat Q}p_Q\chi_Q\|_{L_\infty(\mathcal{N})}\\\
	&=c\sup_{\substack{k\geq -1\\1\leq \ell\leq\lceil\log_2 m_{k}\rceil\\Q\in F_{k+1,\ell}}}\|p_{Q}q_{\widehat Q}f_{\widehat Q}q_{\widehat Q}p_Q\chi_Q\|_{L_\infty(\mathcal{N})}\leq c\lambda,
\end{align*}
where the second and last inequalities are due to Lemma \ref{regular} and Lemma \ref{Cuculescu}(iii), respectively, the equality follows from the definition of $p_Q$. 
On the other hand, Lemma \ref{Cuculescu}(iii) also gives
\[
\|qfq\|_{L_\infty(\mathcal N)}\le \lambda.
\]
Combining the preceding estimates, we conclude that
\[
\|g_{\mathrm d}\|_{L_\infty(\mathcal N)}
\le
c\lambda.
\]

\medskip
\noindent\textbf{Step 4: proof of (iii).} To estimate  \eqref{goff-2-lambda f}, 
we decompose
\[
g_{\mathrm{off}}=g_{\mathrm{off},1}+g_{\mathrm{off},2},
\]
where
\[
g_{\mathrm{off},1}
:=\sum_{k\geq -1}\sum_{\ell=1}^{\lceil\log_2 m_k\rceil}
\sum_{Q\in\mathcal F_{k+1,\ell}} p_QA_Q(f)q_Q\chi_Q,
\]
and
\[
g_{\mathrm{off},2}
:=\sum_{k\geq -1}\sum_{\ell=1}^{\lceil\log_2 m_k\rceil}
\sum_{Q\in\mathcal F_{k+1,\ell}} q_QA_Q(f)p_Q\chi_Q.
\]
Then, we immediately obtain
$$\|g_{\mathrm{off}}\|_{L_2(\N)}^2\leq 2(\|g_{\mathrm{off},1}\|_{L_2(\N)}^2+\|g_{\mathrm{off},2}\|_{L_2(\N)}^2).$$
It is  enough to estimate $\|g_{\mathrm{off},1}\|_{L_2(\N)}^2$, since the term
$\|g_{\mathrm{off},2}\|_{L_2(\mathcal N)}^2$ can be handled exactly in the same way.
Since the  projections $p_Q\chi_Q$ are disjoint,  we have
$$(g_{\mathrm{off},1})^*g_{\mathrm{off},1}=\sum_{k\geq -1}\sum_{\ell=1}^{\lceil\log_2 m_{k}\rceil}\sum_{Q\in F_{k+1,\ell}}(p_{Q}A_{Q}(f)q_Q\chi_{Q})^*p_{Q}A_{Q}(f)q_Q\chi_{Q}.$$
Therefore,
\begin{equation}\label{goff-inf-1}
\begin{aligned}
	\|g_{\mathrm{off},1}\|_{L_2(\N)}^2&=\sum_{k\geq -1}\sum_{\ell=1}^{\lceil\log_2 m_{k}\rceil}\sum_{Q\in F_{k+1,\ell}}\|p_{Q}A_{Q}(f)q_Q\chi_{Q}\|_{L_2(\N)}^2\\
	&\leq \sum_{k\geq -1}\sum_{\ell=1}^{\lceil\log_2 m_{k}\rceil}\sum_{Q\in F_{k+1,\ell}}\|p_{Q}A_{Q}(f)q_Q\chi_{Q}\|_{L_\infty(\N)}\|p_{Q}A_{Q}(f)q_Q\chi_{Q}\|_{L_1(\N)}.
\end{aligned}  
\end{equation}
We next claim that
\begin{equation}\label{goff-infty}
	\sup_{Q\in F}\|p_{Q}A_Q(f)q_Q\|_{\infty}\leq c\lambda,
\end{equation}
and 
\begin{equation}\label{goff-1-sum}
\sum_{k\geq -1}\sum_{\ell=1}^{\lceil\log_2 m_{k}\rceil}\sum_{Q\in F_{k+1,\ell}} \|p_QA_Q(f)q_Q\chi_Q\|_{L_1(\mathcal{N})}\leq c\|f\|_{L_1(\mathcal{N})}.
\end{equation}
Assuming these claims  hold, substituting \eqref{goff-infty} and \eqref{goff-1-sum} into \eqref{goff-inf-1} directly gives
$$	\|g_{\mathrm{off},1}\|_{L_2(N)}^2\leq c\lambda \|f\|_{L_1(\mathcal{N})}.$$

We now prove the claim \eqref{goff-infty}. Fix an arbitrary atom $Q\in F_{k+1,\ell}$. By Lemma \ref{AcB}, for every $x\in Q$, we have
\begin{equation}\label{B1-u-B2}
	p_{Q}A_Q(f)(x)q_Q=\frac{1}{|Q|}\int_Qp_{Q}f(y)K_{A_Q}(x,y)q_Qd\mu(y)=\frac{1}{|Q|}B_1(x)^{\frac{1}{2}}\cdot u_{f,Q}\cdot B_2^{\frac{1}{2}},
\end{equation}
where 
$$B_1(x)=\int_Qp_Qf(y)p_Q|K_{A_Q}(x,y)|^2d\mu(y),\quad B_2=\int_Qq_Qf(y)q_Qd\mu(y)$$
and
$$\|u_{f,Q}\|_{L_\infty(\mathcal{N})}\leq1.$$
Consequently,
$$\|p_{Q}A_Q(f)q_Q\chi_Q\|_{L_\infty(\mathcal{N})}\leq \frac{1}{|Q|}\|B_1(x)^{\frac{1}{2}}\|_{L_\infty(\mathcal{M})}\|B_2^{\frac{1}{2}}\|_{L_\infty(\mathcal{M})}.$$
By Lemma \ref{KAQ}, Lemma \ref{regular} and Lemma \ref{Cuculescu} (iii),  we obtain
\begin{align*}
	\|B_1(x)^{\frac{1}{2}}\|_{L_\infty(\mathcal{M})}&\leq c\||Q|p_Qf_Qp_Q\|_{L_\infty(\mathcal{M})}^{\frac{1}{2}}\leq c|Q|^{\frac{1}{2}}\|p_Qf_{\widehat{Q}}p_Q\|_{L_\infty(\mathcal{M})}^{\frac{1}{2}}\\
	&\leq c|Q|^{\frac{1}{2}}\|p_Qq_{\widehat{Q}}f_{\widehat{Q}}q_{\widehat{Q}}p_Q\|_{L_\infty(\mathcal{M})}^{\frac{1}{2}}\leq c|Q|^{\frac{1}{2}}\cdot\lambda^{\frac{1}{2}}.
\end{align*}
On the other hand, Lemma~\ref{Cuculescu}(iii) gives
\begin{equation}\label{B2-infty}
	\|B_2^{\frac{1}{2}}\|_{L_\infty(\mathcal{M})}=\||Q|q_Qf_Qq_Q\|_{L_\infty(\mathcal{M})}^{\frac{1}{2}}\leq |Q|^{\frac{1}{2}}\lambda^{\frac{1}{2}}.
\end{equation}
Combining the above estimates, we complete the proof of the claim \eqref{goff-infty}.

We next prove \eqref{goff-1-sum}. By \eqref{B1-u-B2}, we have
\begin{align*}
	\|p_QA_Q(f)q_Q\chi_Q\|_{L_1(\mathcal{N})}&=\int_Q\|\frac{1}{|Q|}B_1(x)^{\frac 1 2}\cdot u_{f,Q}\cdot B_2^{\frac 1 2}\|_{L_1(\mathcal{M})}d\mu(x)\\
	&\leq\int_Q\frac{1}{|Q|}\Big\|B_{1}(x)^{\frac 1 2}\Big\|_{L_1(\mathcal{M})}\cdot\|B_2^{\frac 1 2}\|_{L_\infty(\mathcal{M})}d\mu(x).
\end{align*}
Using the H\"{o}lder inequality and Lemma \ref{KAQ}, we get
$$
	\|B_1(x)^{\frac{1}{2}}\|_{L_1(\mathcal{M})}=\Big\|p_QB_1(x)\Big\|_{L_{\frac{1}{2}}(\mathcal{M})}^{\frac{1}{2}}\leq c \|p_Q\|_{L_1(\mathcal{M})}^{\frac{1}{2}}\Big\|\int_Qp_Qf(y)p_Qd\mu(y)\Big\|_{L_1(\mathcal{M})}^{\frac{1}{2}}.
$$
Then by the above estimate and \eqref{B2-infty}, we obtain
\begin{equation}\label{paq-L1N}
	\begin{aligned}
		\|p_QA_Q(f)q_Q\chi_Q\|_{L_1(\mathcal{N})}&\leq c |Q|^\frac{1}{2}\lambda^\frac{1}{2}\|p_Q\|_{L_1(\mathcal{M})}^{\frac{1}{2}}\Big\|\int_Qp_Qf(y)p_Qd\mu(y)\Big\|_{L_1(\mathcal{M})}^{\frac{1}{2}}\\
		&=c\lambda^\frac{1}{2} \varphi(p_Q\chi_Q)^\frac{1}{2}\cdot\varphi(p_Qfp_Q\chi_Q)^\frac{1}{2}.
	\end{aligned}
\end{equation}
Therefore, by the Cauchy-Schwarz inequality and Lemma \ref{Cuculescu} (iv), 
\begin{align*}
&\sum_{k\geq -1}\sum_{\ell=1}^{\lceil\log_2 m_{k}\rceil}\sum_{Q\in F_{k+1,\ell}} \|p_QA_Q(f)q_Q\chi_Q\|_{L_1(\mathcal{N})}\\
	&\leq c\sum_{k\geq -1}\sum_{\ell=1}^{\lceil\log_2 m_{k}\rceil}\sum_{Q\in F_{k+1,\ell}} \lambda^\frac{1}{2} \varphi(p_Q\chi_Q)^\frac{1}{2}\cdot\varphi(p_Qfp_Q\chi_Q)^\frac{1}{2}\\
	&\leq \Big(\sum_{k\geq -1}\sum_{\ell=1}^{\lceil\log_2 m_{k}\rceil}\sum_{Q\in F_{k+1,\ell}} \lambda \varphi(p_Q\chi_Q)\Big)^\frac{1}{2}\cdot\Big(\sum_{k\geq -1}\sum_{\ell=1}^{\lceil\log_2 m_{k}\rceil}\sum_{Q\in F_{k+1,\ell}}\varphi(p_Qfp_Q\chi_Q)\Big)^\frac{1}{2}\\
	&\leq c(\lambda \varphi(1-q))^{\frac 12}(\varphi((1-q)f))^{\frac{1}{2}}\leq c\leq\|f\|_{L_1(\mathcal{N})}.
\end{align*}
Consequently, \eqref{goff-1-sum} follows.

\medskip
\noindent\textbf{Step 5: proof of (iv).} We first verify the  cancellation conditions \eqref{bd0}. If $Q\in F_{k+1}'$ or $\alpha_k=0$, then $A_Q(f)=f_Q$, and thus
$$
\int_Qb_{\mathrm{d}}^{k+1,\ell,Q}(x)d\mu(x)=\int_Qp_Q(f(x)-f_Q)p_Q\chi_Q(x)d\mu(x)=0$$
and
$$
\int_Qb_{\mathrm{d}}^{k+1,\ell,Q}r_k^{\alpha_k}(x)d\mu(x)=\int_Qp_Q(f(x)-f_Q)p_Q\chi_Q(x)r_k^{\alpha_k}(x)d\mu(x)=0,$$
where the last equality holds since $r_k^{\alpha_k}$ is constant whenever $Q\in F_{k+1}'$ or $\alpha_k=0$.
Otherwise, by  \eqref{AQ1} and \eqref{AQ}, we have
$$A_Q(f)(x)=\frac{f_Q-(fr_k^{\alpha_k})_Q\cdot(r_k^{-\alpha_k})_Q+\big[(fr_k^{\alpha_k})_Q-f_Q\cdot(r_k^{\alpha_k})_Q\big]\cdot r_k^{-\alpha_k}(x)}{1-\big|\frac{1}{|Q|} \int_{Q} r_k^{\alpha_{k}} d \mu\big|^{2}}.$$
Then,
\begin{equation*}
\begin{aligned}
&\int_Q(f(x)-A_Q(f)(x))d\mu(x)\\
&=\int_Qf(x)d\mu(x)-\frac{|Q|f_Q-(fr_k^{\alpha_k})_Q\cdot|Q|\cdot(r_k^{-\alpha_k})_Q+\big[(fr_k^{\alpha_k})_Q-f_Q\cdot(r_k^{\alpha_k})_Q\big]\cdot \int_Qr_k^{-\alpha_k}(x)d\mu(x)}{1-\big|\frac{1}{|Q|} \int_{Q} r_k^{\alpha_{k}} d \mu\big|^{2}}\\
&=\int_Qf(x)d\mu(x)-\frac{|Q|f_Q-f_Q(r_k^{\alpha_k})_Q\cdot \int_Qr_k^{-\alpha_k}(x)d\mu(x)}{1-\big|\frac{1}{|Q|} \int_{Q} r_k^{\alpha_{k}} d \mu\big|^{2}}\\
&=\int_Qf(x)d\mu(x)-\frac{\int_Qfd\mu\Big(1-\frac{1}{|Q|}\int_Qr_k^{\alpha_k}d\mu\cdot \frac{1}{|Q|}\int_Qr_k^{-\alpha_k}d\mu\Big)}{1-\big|\frac{1}{|Q|} \int_{Q} r_k^{\alpha_{k}} d \mu\big|^{2}}=0.
\end{aligned}
\end{equation*}
This immediately yields
$$
\int_Qb_{\mathrm{d}}^{k+1,\ell,Q}(x)d\mu(x)=\int_Qp_Q(f(x)-A_Q(f)(x))p_Q\chi_Q(x)d\mu(x)=0.$$
Similarly, it follows that
$$
\int_Qb_{\mathrm{d}}^{k+1,\ell,Q}r_k^{\alpha_k}(x)d\mu(x)=\int_Qp_Q(f(x)-A_Q(f)(x))p_Q\chi_Q(x)r_k^{\alpha_k}(x)d\mu(x)=0.$$
We finish the proof of \eqref{bd0}. Moreover, we deduce that
\begin{align*}
	&\sum_{k\geq -1}\sum_{\ell=1}^{\lceil\log_2 m_{k}\rceil}\sum_{Q\in F_{k+1,\ell}}\|b_{\mathrm{d}}^{k+1,\ell,Q}\|_{L_1(\mathcal{N})}\\
	&=\sum_{k\geq -1}\sum_{\ell=1}^{\lceil\log_2 m_{k}\rceil}\sum_{Q\in F_{k+1,\ell}}\Big\|p_Qfp_Q\chi_Q-p_{Q}\mathrm{Re}(A_Q(f))p_Q\chi_Q-ip_Q\mathrm{Im}(A_Q(f))p_Q\chi_Q\Big\|_{L_1(\mathcal{N})}\\
	&\leq \sum_{k\geq -1}\sum_{\ell=1}^{\lceil\log_2 m_{k}\rceil}\sum_{Q\in F_{k+1,\ell}}\Big(\|p_Qfp_Q\chi_Q\|\|_{L_1(\mathcal{N})}+c\|p_{Q}f_Qp_Q\chi_Q\|_{L_1(\mathcal{N})}\Big)=c\varphi((1-q)f)\leq c\|f\|_{L_1(\mathcal{N})},
\end{align*}
 where we have used Lemma \ref{KAQ} (resp. Lemma \ref{Cuculescu} (iv)) in the first (resp. last) inequality.

\medskip
\noindent\textbf{Step 6: proof of (v).} Following the same argument as in the proof of cancellation conditions stated in (iv), for every $Q\in F_{k+1,\ell}$, we obtain \eqref{boff0}.
This completes the proof.
\end{proof}

\section{Proof of Theorem  \ref{weak-main}}\label{sec4}
In this section, we prove Theorem~\ref{weak-main} by passing to a twisted version of the Vilenkin partial sums. The advantage of this renormalization is that the corresponding kernels admit a decomposition into local averaging terms and Hilbert-transform-type terms, which fits naturally with the Calder\'on-Zygmund decomposition established in Section~\ref{3.2}. We first prove uniform weak type \((1,1)\) and strong type \((p,p)\) estimates for the modified operators \(\widetilde S_n\), the desired bounds for the original partial sums \(S_n\) will then follow by a simple argument.

For \(n\in\mathbb N\), define the \(n\)-th modified partial sum by
$$\widetilde{S}_n(f)(x)=\int_0^1f(y) \widetilde{D}_n(x\dot{-}y)d\mu(y),$$
where
$$ \widetilde{D}_n(t)= \overline{\phi_n(t)}{D_n}(t).$$
Moreover, it follows from \cite[p. 348]{Go1973} that
\begin{equation}\label{SS}
 \widetilde{S}_n(f) =\overline{\phi_n} S_n(\phi_nf).
\end{equation}
We first establish the following uniform boundedness for $\widetilde{S}_n$, from which
Theorem \ref{weak-main} will follow immediately.
\begin{proposition}\label{hatS} There exists a  universal constants $c>0$ such that
	$$\sup_{n\geq1}\|\widetilde{S}_nf\|_{L_{1,\infty}(\mathcal {N})}\leq c\|f\|_{L_1(\mathcal{N})}, \quad f\in L_1(\mathcal{N})$$
and, for every $1<p<\infty$,
$$\sup_{n\geq1}\|\widetilde{S}_n(f)\|_{L_p(\mathcal{N})}\leq c\frac{p}{p-1}\|f\|_{L_p(\mathcal{N})},\quad f \in L_p(\mathcal{N}).$$
\end{proposition}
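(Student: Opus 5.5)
We will prove the weak type $(1,1)$ bound for $\widetilde S_n$ uniformly in $n$ and then interpolate. Marcinkiewicz interpolation (noncommutative version) between weak $(1,1)$ and the trivial $L_2$ bound of Lemma \ref{lem:type22} gives $c\,\frac{p}{p-1}$ for $1<p\le 2$. For $2<p<\infty$ we use duality: the adjoint of $\widetilde S_n$ is a convolution with $\overline{\widetilde D_n(\dot- t)}$, of the same type, so the bound is $c\,p\le c\,\frac{p}{p-1}\cdot 2$ after symmetric interpolation. (Alternatively, we may note $\widetilde S_n$ is bounded on $L_p$ with constant $\lesssim p$ for $p\ge2$ by the same argument for the adjoint.) Since $\widetilde S_n$ is linear, it suffices to treat $f\in L_1^+(\mathcal N)$.

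Fix $n=\sum\alpha_kM_k$ and $\lambda>0$, and apply Theorem \ref{NewCZ}, writing $f=g_{\mathrm d}+g_{\mathrm{off}}+b_{\mathrm d}+b_{\mathrm{off}}$. By quasi-triangle inequality it suffices to bound $\varphi(\chi_{(\lambda,\infty)}(|\widetilde S_n h|))\lesssim \|f\|_1/\lambda$ for each piece.

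\textbf{Good parts.} For $g_{\mathrm d}$, Chebyshev and Lemma \ref{lem:type22} give the bound $\lambda^{-2}\|g_{\mathrm d}\|_2^2\le\lambda^{-2}\|g_{\mathrm d}\|_\infty\|g_{\mathrm d}\|_1\lesssim \|f\|_1/\lambda$. For $g_{\mathrm{off}}$, the same argument applies using \eqref{goff-2-lambda f}.

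\textbf{Bad parts.} We use the projection $\zeta$. By Theorem \ref{NewCZ}(i), $\varphi(\mathbf 1-\zeta)\lesssim\|f\|_1/\lambda$, so it suffices to control $\zeta\,\widetilde S_n(b)\,\zeta$ in $L_1$. The off-diagonal terms $(\mathbf 1-\zeta)\widetilde S_n(b)\zeta$ are handled by the standard trick of \cite{CCP2022}: write $|x|$ level sets via $x=\zeta x\zeta+(\mathbf 1-\zeta)x+\zeta x(\mathbf 1-\zeta)$, whose last two pieces have support projections of trace $\le\varphi(\mathbf 1-\zeta)$. The required kernel input is the decomposition \eqref{S-alpha-H} of $\widetilde D_n$: following Gosselin/Young,
\[
\widetilde D_n=\sum_k \overline{\phi}_{\alpha_kM_k\cdots}\,\Big(D_{M_k}\sum_{j<\alpha_k}r_k^{j-\alpha_k}\Big)\cdots,
\]
which splits into three pieces: a piece of the form $D_{M_k}$ (a conditional expectation $\mathbb E_k$), a piece $D_{M_k}r_k^{\pm\alpha_k}$, a piece $D_{M_{k+1}}$, and a Hilbert-transform-type remainder $H_k$ in which $D_{M_k}$ is multiplied by the conjugate Dirichlet kernel $\sum_{j}r_k^j$ on $\mathbb Z_{m_k}$. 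For an atom $Q\in F_{k'+1,\ell}$ and $x\notin 3Q$ (guaranteed by $\zeta$, since $\zeta\,p_Q\chi_{3Q}=0$), the first three pieces annihilate $b^{k'+1,\ell,Q}$. At levels $k\ne k'$, $Q$ is either contained in an atom of $\mathcal F'_{k}$ or is a union of such atoms, and the kernel is constant or $r$-modulated; at level $k=k'$, this is exactly where the cancellation \eqref{bd0}, \eqref{boff0} against $r_{k}^{\alpha_{k}}$ is used. The only surviving contributions are the $H_k$ terms.

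\textbf{Main obstacle.} The main step is showing the Hörmander-type bound
\[
\int_{(3Q)^c}|H_k(x\dot- y)-H_k(x\dot- y')|\,dx\lesssim 1,\qquad y,y'\in Q,
\]
uniformly in $m_k$. Here $H_k$ restricted to the $k$-th digit behaves like the discrete Hilbert kernel $1/(x_k-y_k)$ on $\mathbb Z_{m_k}$, and $Q$ is a ``dyadic-like'' subinterval of consecutive digits. The finer filtration from Construction \ref{construction} makes $3Q$ a genuine concentric neighborhood in the digit $x_k$, so the classical estimate $\sum_{|j|>|Q|M_{k+1}}|Q|M_{k+1}/j^2\lesssim1$ applies. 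Summing $\|b^{k+1,\ell,Q}\|_1$ via Theorem \ref{NewCZ}(iv) gives the diagonal estimate; for $b_{\mathrm{off}}$ we use that $\zeta p_Q\chi_Q$-type products vanish off $3Q$ together with Lemma \ref{AcB}/Cauchy–Schwarz, as in Step 4 of Theorem \ref{NewCZ}, to obtain $L_1$ control $\lesssim\|f\|_1$. Theorem \ref{weak-main} then follows from \eqref{SS}, since multiplication by the unimodular $\phi_n$ is isometric on all the spaces involved.
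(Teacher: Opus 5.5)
Your route is the paper's: the decomposition of Theorem~\ref{NewCZ}, $L_2$ bounds for $g_{\mathrm d}$ and $g_{\mathrm{off}}$, the projection $\zeta$, and Young's formula \eqref{S-alpha-H}. The cancellations \eqref{bd0}, \eqref{boff0} reduce everything to the $H_k$-terms with $Q\in F_{k+1,\ell}$, and Lemma~\ref{AcB} plus Cauchy--Schwarz handle $b_{\mathrm{off}}$. However, two steps fail as written.

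First, the constant for $p>2$. Duality gives $\|\widetilde S_n\|_{L_p\to L_p}\le c\,p$, and your inequality $c\,p\le 2c\,\frac{p}{p-1}$ is false for every $p>3$. No argument can close this, because the target is false for large $p$. Take $\mathcal M=\mathbb C$: a bound $c\frac{p}{p-1}\le 2c$ for all $p\ge 2$ would give, letting $p\to\infty$, $\sup_n\|D_n\|_{L_1}=\sup_n\|\widetilde S_n\|_{L_\infty\to L_\infty}\le 2c$. This contradicts the unboundedness of the Vilenkin Lebesgue constants. Weak type $(1,1)$, Lemma~\ref{lem:type22}, Marcinkiewicz interpolation and duality give $c\max\{p,\frac{p}{p-1}\}$. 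That is also all the paper's one-line interpolation argument delivers, and it is what should be claimed.

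Second, your key H\"ormander bound over the complement of the linear dilate $3Q$ is not uniform in $m_k$. The kernel $\cot(\pi(x_k-t_k)/m_k)$ is $m_k$-periodic in $x_k-t_k$. Hence $|\sin(\pi r/m_k)|\approx\min\{r,m_k-r\}/m_k$ rather than $r/m_k$, and the first and last digits of the parent atom $I\in F_k'$ are neighbours for the kernel. For example, let $m_k$ be large and let $Q\in F_{k+1,2}$ be the right-most piece of $I$, with $b\approx m_k/4$ digits. Take $t,t^Q\in Q$ with $t_k=m_k-1$ and $t^Q_k=m_k-b$. For $x_k=s-1$ with $1\le s\le b/2$ one has $x\in I\setminus 3Q$ and $|\Delta_Q(x,t,t^Q)|\ge \frac{m_k}{2\pi s}-1$. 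Therefore $\frac{1}{|I|}\int_{I\setminus 3Q}|\Delta_Q|\,d\mu\gtrsim\log m_k$. The paper's Lemma~\ref{cot} makes the same slip.

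The repair is to replace $3Q$ in \eqref{zeta} by the cyclic $3$-fold dilate of $Q$ inside $I$: the atoms of $F_{k+1}'$ in $I$ whose $k$-th digit lies within cyclic distance $b$ of a digit of $Q$. This still has measure at most $3|Q|$, so Theorem~\ref{NewCZ}(i) is unaffected. The vanishing of the non-$H_k$ terms only used $x\notin Q$. For cyclic distance $a>b>d$ one gets $|\Delta_Q|\lesssim m_k d/a^2$, so your sum $\sum_{a>b} b/a^2\lesssim 1$ and its $L_2$ analogue needed for $b_{\mathrm{off}}$ go through. Also, $\zeta(x)p_Q=0$ holds on the dilate, not off it.
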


We now deduce Theorem~\ref{weak-main} from Proposition~\ref{hatS}.
\begin{proof} [Proof of Theorem \ref{weak-main}] 
For every  $n\geq1$, by Lemma \ref{propetyVilenkin} (i) and \eqref{SS}, we have $$|S_n(f)|=|\overline{\phi_n}S_n(f)|=|\overline{\phi_n}S_n((\phi_n\overline{\phi_n})f)|=|\widetilde{S}_n(\overline{\phi_n}f)|.$$
Applying Proposition \ref{hatS}, we obtain
\begin{align*}
\|S_n(f)\|_{L_{1,\infty}(\mathcal{N})}=\|\widetilde{S}_n(\overline{\phi_n}f)\|_{L_{1,\infty}(\mathcal{N})} \leq c\|\overline{\phi_n}f\|_{L_1(\mathcal{N})}=c\|f\|_{L_1(\mathcal{N})}.
\end{align*}
Similarly, the strong type $(p,p)$ estimate follows from the second inequality in
Proposition \ref{hatS}. This completes the proof.
\end{proof}

We now turn to the proof of Proposition \ref{hatS}.  We begin with recalling a decomposition
of $\widetilde{S}_n$ ( see \cite[Page 312, 313]{Yo1976}): given $x\in[0,1)$, suppose that $n=\sum_{k=0}^\infty\alpha_kM_k$ with $0\leq\alpha_k<m_k$. Then
\begin{equation}\label{Sn-S-alpha-k}
	\widetilde{S}_n(f)(x)=\sum_{k=0}^\infty\widetilde{S}_{\alpha_kM_k}(f)(x),
\end{equation}
and for each $k\ge 0$,
\begin{equation}\label{S-alpha-H}
	\begin{aligned} 
		\widetilde{S}_{\alpha_k M_k} f(x) &= \frac{\alpha_k}{|I_k(x)|} \int_{I_k(x) \cap \{x_k = t_k\}} f(t) \, d\mu(t)\\
		&\quad +  \frac{r_k^{-\alpha_k}(x)}{2|I_k(x)|} \int_{I_k(x) \cap \{x_k \neq t_k\}} f(t)\, r_k^{\alpha_k}(t) \, d\mu(t)  \\
		&\quad -  \frac{1}{2|I_k(x)|} \int_{I_k(x) \cap \{x_k \neq t_k\}} f(t) \, d\mu(t)  \\
		&\quad + i \, r_k^{-\alpha_k}(x) H_k(fr_k^{\alpha_k})(x) - i H_k (f)(x)\\
		&=:\sum_{i=1}^5\widetilde{S}_{\alpha_kM_k}^i(f)(x),
	\end{aligned}
\end{equation}
where $I_k(x)$ denotes the unique atom of $\mathcal{F}_k$ containing $x$, and 
\begin{equation}\label{Hk}
H_k(f)(x)=\frac{1}{2|I_k(x)|}\int_{I_k(x)\cap\{x_k\neq t_k\}}f(t)\cot(\frac{\pi(x_k-t_k)}{m_k})d\mu(t).
\end{equation}

The next elementary kernel estimate will be used repeatedly to handle the diagonal and off-diagonal bad terms.
\begin{lemma}\label{cot} 	Let $k\ge0$, $1\le \ell\le \lceil\log_2 m_k\rceil$, let $I\in F_k'$, and let
	$Q\in F_{k+1,\ell}$ be such that $Q\subset I$. Then  for every $t,t^Q\in Q$ with $t=\sum_{k=0}^\infty \frac{t_k}{M_{k+1}}$ and $t^Q=\sum_{k=0}^\infty \frac{t_k^Q}{M_{k+1}}$, and $J=[\frac{j}{M_{k+1}},\frac{j+1}{M_{k+1}})\subset I\cap(3Q)^c$, there exists a universal  constant $c>0$  such that  the following hold.
	\begin{enumerate}[\rm(i)]
		\item
		\[
		\int_J |\Delta_{Q}(x,t,t^Q)|\,d\mu(x)
		\le \frac{c}{M_k}\,\frac{|t_k-t_k^Q|}{\min\{|j-t_k|,|j-t_k^Q|\}(\min\{|j-t_k|,|j-t_k^Q|\}+|t_k-t_k^Q|)},
		\]
	where
	\begin{equation}\label{DQ}
	\Delta_{Q}(x,t,t^Q)
	:=
	\cot\!\big(\frac{\pi(x_k-t_k)}{m_k}\big)
	-
	\cot\!\big(\frac{\pi(x_k-t_k^Q)}{m_k}\big);
	\end{equation}
moreover,
		\[
		\frac1{|I|}\int_{I\cap(3Q)^c} |\Delta_{Q}(x,t,t^Q)|\,d\mu(x)\le c.
		\]
		
		\item
		\[
		\bigl(\int_J |\Delta_{Q}(x,t,t^Q)|^2\,d\mu(x)\bigr)^{1/2}
		\le
		\frac{c\,m_k^{1/2}}{M_k^{1/2}}\,\frac{|t_k-t_k^Q|}{\min\{|j-t_k|,|j-t_k^Q|\}(\min\{|j-t_k|,|j-t_k^Q|\}+|t_k-t_k^Q|)}.
		\]
	\end{enumerate}
\end{lemma}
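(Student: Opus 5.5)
The whole estimate is one-dimensional. On $J=[\frac{j}{M_{k+1}},\frac{j+1}{M_{k+1}})$ the digit $x_k$ is constantly equal to $j$ modulo $m_k$; more precisely, $j$ should be read as the position of $J$ inside $I$, so we identify $j$ with the $k$-th digit. Hence $\Delta_Q(x,t,t^Q)$ is constant on $J$, and
\[
\int_J|\Delta_Q|\,d\mu=\frac{1}{M_{k+1}}\Bigl|\cot\bigl(\tfrac{\pi(j-t_k)}{m_k}\bigr)-\cot\bigl(\tfrac{\pi(j-t_k^Q)}{m_k}\bigr)\Bigr|,
\qquad
\Bigl(\int_J|\Delta_Q|^2\,d\mu\Bigr)^{1/2}=\frac{1}{M_{k+1}^{1/2}}\,|\cdots|.
\]
Since $M_{k+1}=m_kM_k$, part (ii) follows from part (i), and both reduce to the pointwise inequality
\[
\Bigl|\cot\bigl(\tfrac{\pi a}{m}\bigr)-\cot\bigl(\tfrac{\pi b}{m}\bigr)\Bigr|\le c\,\frac{m\,|a-b|}{d(d+|a-b|)},\qquad d=\min\{|a|,|b|\},
\]
where $a=j-t_k$, $b=j-t_k^Q$, $m=m_k$, and $a,b$ are taken modulo $m$ in $(-m/2,m/2]$ (cot has period $\pi$). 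The required factor is indeed $\frac{1}{M_{k+1}}\cdot m_k=\frac1{M_k}$ for (i), and $\frac{m_k}{M_{k+1}^{1/2}}=\frac{m_k^{1/2}}{M_k^{1/2}}$ for (ii).

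The pointwise bound comes from $\cot u-\cot v=\frac{\sin(v-u)}{\sin u\sin v}$ together with $|\sin(v-u)|\le|v-u|$ and $|\sin w|\ge\frac{2}{\pi}\,\mathrm{dist}(w,\pi\mathbb Z)$. Before applying these, I need to check that $a$ and $b$ do not straddle a singularity. Since $J\subset (3Q)^c$ and $t,t^Q\in Q$, the digit $j$ lies outside the dilated digit interval of $Q$. Therefore $j-t_k$ and $j-t_k^Q$ have the same sign as integers, and $|j-t_k|,|j-t_k^Q|\ge \max(1,b_Q)$, where $b_Q$ is the number of digits in $Q$ and $|t_k-t_k^Q|<b_Q$.

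For the wrap-around modulo $m$, I will use the distance to $\pi\mathbb Z$. The main subtlety is that $\min\{|j-t_k|,|j-t_k^Q|\}$ in the statement should be the cyclic distance. When the true integer distance exceeds $m/2$, I expect to handle it by splitting according to whether $j$ lies to the right or the left of $Q$ and using the periodicity of cot. This gives $|\sin u\sin v|\gtrsim \frac{d\,d'}{m^2}$ with $d'=\max\{|a|,|b|\}\ge \frac{d+|a-b|}{2}$, which yields the inequality.

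For the averaged bound in (i), I sum the estimate over the $J\subset I\cap(3Q)^c$. Put $s=|t_k-t_k^Q|\le b_Q$ and note that $d$ ranges over integers $\ge b_Q\ge s$, each value taken at most twice (on the two sides of $Q$, cyclically). This gives
\[
\frac1{|I|}\int_{I\cap(3Q)^c}|\Delta_Q|\,d\mu\le \frac{M_k}{1}\cdot\frac{c}{M_k}\sum_{d\ge \max(1,s)}\frac{s}{d(d+s)}\le c\sum_{d\ge s}\frac{s}{d^2}\le c',
\]
using $|I|=M_k^{-1}$. The case $s=0$ is trivial.

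The main obstacle is the careful cyclic bookkeeping: guaranteeing that $(3Q)^c$ keeps $j$ away from both $t_k$ and $t_k^Q$ cyclically, with $d\gtrsim$ the size of $Q$, even when $Q$ is near the edge of $I$. I would handle this by noting that $3Q$ may be truncated by $I$, in which case the cyclic distance across the boundary is at least the linear distance. I would take this as the working interpretation; a factor-2 loss is harmless.
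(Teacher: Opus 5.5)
\textbf{There is a genuine gap in the averaged bound.} Your reduction to a single digit and the identity $\cot u-\cot v=\frac{\sin(v-u)}{\sin u\,\sin v}$ are what the paper does. Your insistence on cyclic distances is also needed. The paper's step $\sin(\pi r/m_k)\approx r/m_k$ for all $1\le r\le m_k-1$ is false for $r$ near $m_k$. With the literal (linear) $\min\{|j-t_k|,|j-t_k^Q|\}$, the first display of (i) and the display of (ii) fail: in the example below, at $x_k=4N-1$ the left side of (i) is at least $\frac{1}{8M_k}$ while the right side is at most $\frac{c}{9NM_k}$. With the cyclic reading, your pointwise bounds go through.

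The problem is the averaged bound. There you need every $J\subset I\cap(3Q)^c$ to lie at cyclic distance at least $b_Q$ from the digits of $Q$. You justify this by saying that across the boundary of $I$ the cyclic distance is at least the linear one. That is backwards: the cyclic distance never exceeds the linear distance. Since $3Q$ is the ordinary concentric dilation of the interval $Q$, if $Q$ contains the first digit of $I$ then the left third of $3Q$ lies outside $I$. The atoms $J$ at the right end of $I$ are then outside $3Q$ yet cyclically adjacent to $Q$. Your $d$ therefore starts at $1$, and $\sum_{d\ge1}\frac{s}{d(d+s)}$ grows like $\log s$.

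More careful bookkeeping cannot fix this, because the second assertion of (i) is false as stated. Take $k\ge1$, $m_k=4N$ with $N\ge2$, and let $Q\in F_{k+1,2}$ be the atom of $I$ formed by the digits $0,\dots,N-1$. Then $I\cap(3Q)^c$ consists of the digits $2N,\dots,4N-1$. Let $t_k=N-1$, $t_k^Q=0$ and $x_k=4N-r$ with $1\le r\le N-1$. Using $\tan u\le\frac{4u}{\pi}$ on $[0,\frac{\pi}{4}]$,
\[
\Delta_Q(x,t,t^Q)=\cot\frac{\pi(3N-r+1)}{4N}+\cot\frac{\pi r}{4N}\ge\frac{N}{r}-1 .
\]
Hence $\frac{1}{|I|}\int_{I\cap(3Q)^c}|\Delta_Q|\,d\mu\ge\frac{1}{4N}\sum_{r=1}^{N-1}\bigl(\frac{N}{r}-1\bigr)\ge\frac14(\log N-1)$, which is unbounded when $\sup_k m_k=\infty$. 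The paper's argument has the same hole, since its sum over $a\ge d$ rests on the linear-distance pointwise bound.

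What your method does prove is the lemma with $3Q$ replaced by the cyclic dilation of $Q$ inside $I$, i.e.\ the digits within cyclic distance $b_Q$ of those of $Q$. This set still has measure at most $3|Q|$, so Theorem~\ref{NewCZ}(i) is unaffected. Every $J\subset I\cap(3Q)^c$ is then at cyclic distance greater than $b_Q>s$ from $Q$, with each value attained at most twice. The bound $\sum_{d>b_Q}\frac{s}{d(d+s)}\le\frac{s}{b_Q}\le1$ then finishes the argument.
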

\begin{proof}  Fix  $t,t^Q\in Q$ and  a level-$(k+1)$ atom $J=\bigl[\frac{j}{M_{k+1}},\frac{j+1}{M_{k+1}}\bigr)\subset I\cap(3Q)^c$.  If $t_k=t_k^Q$, then $\Delta_Q(x,t,t^Q)=0$, and all three estimates are trivial. Therefore, we may assume that $t_k\neq t_k^Q$.
Interchanging $t$ and $t^Q$ if necessary,  we may suppose that
\[|j-t_k|\le |j-t_k^Q|.\]
Set
\[a:=|j-t_k|,\qquad d:=|t_k-t_k^Q|\geq1.\]
Since $J\subset I\cap(3Q)^c$, we have
\[	|j-t_k^Q|=a+d\quad \mbox{and}\quad 1\le d\le a\le m_k-1.	\]
Using the identity
\[	\cot u-\cot v=\frac{\sin(v-u)}{\sin u\,\sin v},\]
we obtain
	\[	|\Delta_Q(x,t,t^Q)|
		=
		\frac{\bigl|\sin\bigl(\frac{\pi(t_k-t_k^Q)}{m_k}\bigr)\bigr|}
		{\bigl|\sin\bigl(\frac{\pi(x_k-t_k)}{m_k}\bigr)\bigr|
			\bigl|\sin\bigl(\frac{\pi(x_k-t_k^Q)}{m_k}\bigr)\bigr|}.
		\]
		Since $1\le d\le a\le a+d\le m_k-1$, the elementary estimate
		\[
		\sin\Bigl(\frac{\pi r}{m_k}\Bigr)\approx \frac{r}{m_k},
		\qquad 1\le r\le m_k-1,
		\]
		yields
		\[
		|\Delta_Q(x,t,t^Q)|
		\le
		c\,\frac{\frac{d}{m_k}}{\frac{a}{m_k}\frac{a+d}{m_k}}
		=
		c\,\frac{m_k\,d}{a(a+d)},\quad \mbox{for all}\ x\in J.
		\]
		Since $|J|=M_{k+1}^{-1}=(m_kM_k)^{-1}$, it follows that
		\[
		\int_J |\Delta_Q(x,t,t^Q)|\,d\mu(x)
		\le
		|J|\,\sup_{x\in J}|\Delta_Q(x,t,t^Q)|
		\le
		\frac{c}{M_k}\,\frac{d}{a(a+d)},
		\]
		which proves the first part of \rm(i), since
		\[
		a=\min\{|j-t_k|,|j-t_k^Q|\}.
		\]
		
		Similarly,
		\[
		\Big(\int_J |\Delta_Q(x,t,t^Q)|^2\,d\mu(x)\Big)^{1/2}
		\le
		|J|^{1/2}\,\sup_{x\in J}|\Delta_Q(x,t,t^Q)|
		\le
		\frac{c\,m_k^{1/2}}{M_k^{1/2}}\,
		\frac{d}{a(a+d)},
		\]
		which proves \rm(ii).
		
		It remains to prove the second assertion in \rm(i). Decompose $I\cap(3Q)^c$ into level-$(k+1)$ atoms and sum the preceding estimate.  We obtain
	\begin{align*}
	\frac1{|I|}\int_{I\cap(3Q)^c} |\Delta_{Q}(x,t,t^Q)|\,d\mu(x)
	&=M_k \sum_{\substack{I\in F_k', Q\in F_{k+1,\ell}, Q\subset I\\J\subset I\cap(3Q)^c}}\int_J |\Delta_{Q}(x,t,t^Q)|\,d\mu(x)\\
	&\le M_k\sum_{a= d}^{m_k}\frac{c}{M_k}\frac{d}{a(a+d)}\\
	&=c\int_{\mathrm{d}}^{m_k}\frac{d}{x(x+d)}dx=c\log(\frac{2m_k}{m_k+d})<c\log 2,
	\end{align*}
where the first inequality follows from the fact that there are no more than $m_k-3|Q|M_{k+1}$ such terms $J$, note that $m_k-3|Q|M_{k+1}\leq m_k-3d\leq m_k-d$.
	This completes the proof.
	\end{proof}
To prove Proposition \ref{hatS}, we establish the following two lemmas.

\begin{lemma}\label{bd-w11} There exists an  absolute constant $c>0$ such that
$$\varphi(\chi_{(\lambda,\infty)}(|\zeta\widetilde{S}_n(b_{\mathrm{d}})\zeta|))\leq \frac{c\|f\|_{L_1(\mathcal{N})}}{\lambda},\quad \forall n\geq1,$$
{where \(\zeta \) is the projection defined in \eqref{zeta}.}
\end{lemma}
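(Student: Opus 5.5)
By Chebyshev's inequality it is enough to prove
\[
\big\|\zeta\widetilde S_n(b_{\mathrm d})\zeta\big\|_{L_1(\mathcal N)}\le c\|f\|_{L_1(\mathcal N)},
\]
uniformly in $n$. By the triangle inequality and part (iv) of Theorem \ref{NewCZ}, this reduces to a uniform single-atom bound
\[
\big\|\zeta\widetilde S_n(b_{\mathrm d}^{k+1,\ell,Q})\zeta\big\|_{L_1(\mathcal N)}\le c\|b_{\mathrm d}^{k+1,\ell,Q}\|_{L_1(\mathcal N)} .
\]
Here the $n$ in the lemma is the same $n=\sum\alpha_kM_k$ used to build the decomposition.

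Fix $Q\in F_{k+1,\ell}$ and write $b=b_{\mathrm d}^{k+1,\ell,Q}$. We have $Q\subset I$ for a unique $I\in F_k'$, and also $Q\in F_{k+1}'$ when $\ell=\lceil\log_2 m_k\rceil$. I would expand $\widetilde S_n(b)=\sum_{j\ge0}\widetilde S_{\alpha_jM_j}(b)$ using \eqref{Sn-S-alpha-k} and sort the levels $j$ into three groups.

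\emph{Levels $j>k$.} For $x\notin I$, every term of \eqref{S-alpha-H} vanishes, since $I_j(x)\cap Q=\emptyset$. For $x\in I$ we have $\zeta(x)p_Q=0$ on $3Q\supset Q$, so only $x\in I\setminus 3Q$ matters. For such $x$ and $j>k$, the atom $I_j(x)$ is disjoint from $Q$, because $Q$ is a union of level-$(k+1)$ atoms. Hence these terms vanish.

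\emph{Levels $j<k$.} Here $b$ is integrated over $I_j(x)\supset I\supset Q$ against functions that are constant on $I$ as functions of $t$: $r_j^{\alpha_j}(t)$, the cotangent kernel in $t_j$, and the indicator sets $\{x_j=t_j\}$, $\{x_j\neq t_j\}$. By the mean-zero condition \eqref{bd0}, each term of \eqref{S-alpha-H} is zero.

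\emph{The level $j=k$.} On $I$ the first three terms of \eqref{S-alpha-H} integrate $b$ against $1$ or $r_k^{\alpha_k}$ over $I$ minus the single digit slice $\{t_k=x_k\}$. For $x\notin 3Q$ we have $x_k\notin\{t_k:t\in Q\}$, so the slice misses $Q$ and the integral is over all of $Q$. Both cancellations in \eqref{bd0} then kill these terms.

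We are left with the Hilbert-type terms $ir_k^{-\alpha_k}(x)H_k(br_k^{\alpha_k})(x)-iH_k(b)(x)$ for $x\in I\setminus 3Q$. Choose a reference point $t^Q\in Q$. Using $\int_Q b=0$ and $\int_Q br_k^{\alpha_k}=0$, we can subtract $\cot(\pi(x_k-t^Q_k)/m_k)$ inside each integral, which gives
\[
H_k(b)(x)=\frac1{2|I|}\int_Q b(t)\,\Delta_Q(x,t,t^Q)\,d\mu(t),
\]
and the analogous identity with $br_k^{\alpha_k}$. Taking $L_1(\mathcal M)$ norms inside, applying Fubini, and then using the second assertion of Lemma \ref{cot}(i), we get
\[
\int_{I\setminus 3Q}\|H_k(b)(x)\|_{L_1(\mathcal M)}\,d\mu(x)\le\int_Q\|b(t)\|_{L_1(\mathcal M)}\,\frac{1}{2|I|}\int_{I\cap(3Q)^c}|\Delta_Q|\,d\mu(x)\,d\mu(t)\le c\|b\|_{L_1(\mathcal N)} .
\]
The factor $r_k^{-\alpha_k}(x)$ has modulus one, so the same bound holds for the other Hilbert-type term.

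The step I expect to be the main obstacle is the precise support bookkeeping that makes the first three terms vanish, namely checking that $3Q$ contains every digit value occurring in $Q$ together with its neighbours. The other delicate point is that the projections $\zeta$ do not commute with $\widetilde S_n$. I would handle this by using that $\zeta\chi_Q(x)$ annihilates $p_Q$ pointwise, and by simply bounding $\|\zeta X\zeta\|_1\le\|X\|_1$ off $3Q$. Summing the single-atom bound over all $Q$ with (iv) of Theorem \ref{NewCZ} and dividing by $\lambda$ concludes the proof.
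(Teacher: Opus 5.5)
Your outline is, up to swapping the roles of $j$ and $k$, the paper's own proof. It uses the same sorting of the levels of $\widetilde S_n$ against the level of $Q$, the same two cancellations in \eqref{bd0}, and the same subtraction of $\cot(\pi(x_k-t^Q_k)/m_k)$ followed by the second assertion of Lemma~\ref{cot}(i). Bounding $\|\zeta\widetilde S_n(b_{\mathrm d})\zeta\|_{L_1(\mathcal N)}$ in one piece rather than splitting into $A$ and $B$ is cosmetic.

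\textbf{The gap.} The gap, which you inherit from the paper, is that this $L_1$ bound is false for $\zeta$ as in \eqref{zeta}. What breaks is exactly that assertion of Lemma~\ref{cot}(i). The kernel $\cot(\pi(x_k-t_k)/m_k)$ depends on $x_k-t_k$ modulo $m_k$, so it is singular also when $x_k-t_k$ is near $\pm m_k$. By contrast, $3Q$ is a linear dilation in $[0,1)$. Suppose $Q$ abuts the left end of $I$ (the construction produces such $Q$ for every $\ell\ge 2$) and $x$ lies near the right end. Then $x\notin 3Q$, although $x_k$ is cyclically adjacent to the digits of $Q$. The proof of Lemma~\ref{cot} uses $\sin(\pi r/m_k)\approx r/m_k$ for all $1\le r\le m_k-1$, which is true only with $\min\{r,m_k-r\}$ in place of $r$.

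\textbf{A concrete counterexample.} Take $\mathcal M=\mathbb C$, $m_0=m\in 8\mathbb N$, $f=m\chi_{[0,1/m)}$, $\lambda=3$ and $n=m/2$. The only stopping atom is $Q=[0,1/4)\in F_{1,2}$. Hence $\zeta=\chi_{[1/2,1)}$, and $A_Q(f)=4+4(-1)^{x_0}$ on $Q$. Using $\widetilde D_n=\sum_{s=1}^{m/2}r_0^{-s}$, one finds, for $x_0=m-s$ with $s$ odd and $s\le m/2$,
\[
|\widetilde S_n(b_{\mathrm d})(x)|\ \ge\ \Big(1-\frac8m\Big)\Big(\frac{m}{\pi s}-1\Big)-\frac4\pi\log\Big(1+\frac{m}{4s}\Big).
\]
Hence $\|\zeta\widetilde S_n(b_{\mathrm d})\zeta\|_{L_1}\ge\frac{1}{2\pi}\log m-c$, while $\|f\|_{L_1}=1$. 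The weak-type inequality itself is not violated in this example, since the function behaves like $m/s$. It is the route through $L_1$ that fails.

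\textbf{Where the real difficulty lies, and a repair.} The obstacle you flagged is not the real one: killing the first three terms at level $k$ only needs $x\in I\setminus Q$. What the Hilbert-type terms need is an exceptional set containing the cyclic neighbourhood of $Q$ in $I$.

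Replace $3Q$ in \eqref{zeta} by $3Q\cup Q^{\circ}$. Here $Q^{\circ}\subset I$ is the union of the level-$(k+1)$ atoms of $I$ whose digit lies within cyclic distance $b=|Q|M_{k+1}$ (in $\mathbb Z_{m_k}$) of a digit of $Q$.
\begin{enumerate}
\item Since $|Q^{\circ}|\le 3|Q|$, Theorem~\ref{NewCZ}(i) survives with a doubled constant.
\item Your argument for $x\in 3Q$ applies verbatim on $Q^{\circ}$.
\item For $x\in I\setminus Q^{\circ}$, the cyclic distances $a_1,a_2$ from $x_k$ to $t_k,t^Q_k$ both exceed $b>d=|t_k-t^Q_k|$. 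So $|\Delta_Q|\le \pi m_kd/(4a_1a_2)$.
\item Each value of $a_i$ occurs for at most two digits $x_k$, so $\frac1{|I|}\int_{I\setminus Q^{\circ}}|\Delta_Q|\,d\mu\le\frac{\pi d}{2}\sum_{a>b}a^{-2}\le\frac{\pi}{2}$.
\end{enumerate}
With $\zeta$ modified this way, your single-atom bound holds and the rest of your argument goes through. That is all the proof of Proposition~\ref{hatS} requires.
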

\begin{proof} Consider an arbitrary positive integer $n$ with representation $n=\sum_{k=0}^\infty\alpha_kM_k$, $0\leq\alpha_k<m_k$. Without loss of generality, assume that $f\in L_1^+(\mathcal{N})$.  By \eqref{Sn-S-alpha-k} and \eqref{S-alpha-H},  we may write
	\begin{equation}\label{bd-S5}
		\zeta\widetilde{S}_n(b_{\mathrm{d}})\zeta=\sum_{k=0}^\infty\zeta\widetilde{S}_{\alpha_kM_k}(b_{\mathrm{d}})\zeta=\sum_{k=0}^\infty\sum_{i=1}^5\zeta\widetilde{S}_{\alpha_kM_k}^i(b_{\mathrm{d}})\zeta.
	\end{equation}
	Recall that
	\begin{equation}\label{bd-k+1-l-w11}
b_{\mathrm{d}}=\sum_{j\ge -1}\ \sum_{\ell=1}^{\lceil\log_2 m_j\rceil}\ \sum_{Q\in F_{j+1,\ell}} b_{\mathrm{d}}^{j+1,\ell,Q},
\qquad
b_{\mathrm{d}}^{j+1,\ell,Q}:=p_Q\,(f-A_Q(f))\,p_Q\,\chi_Q.
	\end{equation}
	Accordingly, for every $k\geq0$, $1\leq i\leq5$ and $x\in [0,1)$,
	\begin{equation}\label{bd-Si}
	\zeta(x)\widetilde{S}_{\alpha_kM_k}^i(b_{\mathrm{d}})(x)\zeta(x)=\sum_{j\geq -1}\sum_{\ell=1}^{\lceil\log_2 m_{j}\rceil}\sum_{Q\in F_{j+1,\ell}}\zeta(x) \widetilde{S}_{\alpha_kM_k}^i(b_{\mathrm{d}}^{j+1,\ell,Q})(x)\zeta(x).
	\end{equation} 
	We distinguish cases according to whether $x\in 3Q$ or $x\in (3Q)^c$, and further subdivide based on the sizes of  $j$ and $k$. We claim that the only terms on the right-hand side of  \eqref{bd-Si} that  contribute are those corresponding to the case
	\begin{equation}\label{claim}
	j=k,\ Q\in F_{k+1,\ell},\ x\in (3Q)^c,\ \mbox{and} \ i\in\{4,5\}.
	\end{equation}

\medskip
\noindent\textbf{Case 1: \(x\in 3Q\).} By the  construction of $\zeta$  in \eqref{zeta}, we have $\zeta(x)\leq p_Q^{\perp}$.  Since $b_{\mathrm d}^{j+1,\ell,Q}=p_Q b_{\mathrm d}^{j+1,\ell,Q} p_Q$, it follows that
	$$\zeta(x)\widetilde{S}_{\alpha_kM_k}^i(b_{\mathrm{d}}^{j+1,\ell,Q})(x)\zeta(x)=0,\quad 1\leq i\leq5.$$

	\medskip
	\noindent\textbf{Case 2: \(x\in (3Q)^c\).}
	We subdivide this case according to the sizes  of \(j\) and \(k\).
	
	\smallskip
	\noindent\textbf{Subcase 2.1: \(j<k\).} Since $Q\in F_{j+1,\ell}$, we have $|Q|\ge M_{j+1}^{-1}\ge M_k^{-1}=|I_k(x)|$. Hence, if $I_k(x)\cap Q\neq\emptyset$, then the interval $I_k(x)$ is contained in $3Q$, contradicting $x\in(3Q)^c$. Therefore
	\[
	I_k(x)\cap Q=\emptyset,
	\]
	and recalling  \eqref{S-alpha-H}, we have
	\[\zeta(x)\widetilde S_{\alpha_kM_k}^i\!\bigl(b_{\mathrm d}^{j+1,\ell,Q}\bigr)(x)\zeta(x)=0,
	\qquad 1\le i\le 5.\]
Indeed, in the subsequent proof, we only need to consider the case 
\begin{equation}\label{QIk}
Q\subset I_k(x),
\end{equation}
otherwise,  \eqref{S-alpha-H}  implies
	\[\zeta(x)\widetilde S_{\alpha_kM_k}^i\!\bigl(b_{\mathrm d}^{j+1,\ell,Q}\bigr)(x)\zeta(x)=0,
\qquad 1\le i\le 5.\]
 \smallskip
\noindent\textbf{Subcase 2.2: \(j\ge k+1\).} 
By $Q\in F_{j+1,\ell}$, we have $|Q|=\frac{b}{M_{j+1}}<\frac{1}{M_{k+1}}=|I_{k+1}(x)|$ for some $1\leq b< m_{j+1}$, $b\in \mathbb{N}^+$. There exists a unique atom $\widetilde{Q}\in F_{k+1}$ containing $Q$, so that either  $I_{k+1}(x)\cap \widetilde{Q}=\emptyset$ or $I_{k+1}(x)= \widetilde{Q}$,  that is, either $$I_{k+1}(x)\cap Q=\emptyset\quad \mbox{or} \quad Q\subset I_{k+1}(x).$$

For $i=1$,  by \eqref{S-alpha-H},  we have
\begin{equation}\label{i=1-j=k+1}
\zeta(x)\widetilde{S}_{\alpha_kM_k}^1(b_{\mathrm{d}}^{j+1,\ell,Q})(x)\zeta(x)=\zeta(x)\frac{\alpha_k}{|I_k(x)|} \int_{Q\cap  I_{k+1}(x)}b_{\mathrm{d}}^{j+1,\ell,Q}(t)d\mu(t)\zeta(x).
\end{equation}
In the first case, this term is obviously zero since the integration domain is empty; 
in the latter  case, \eqref{i=1-j=k+1} and Theorem~\ref{NewCZ} (iv) imply
$$\zeta(x)\widetilde{S}_{\alpha_kM_k}^1(b_{\mathrm{d}}^{j+1,\ell,Q})(x)\zeta(x)=\zeta(x)\frac{\alpha_k}{|I_k(x)|} \int_{Q}b_{\mathrm{d}}^{j+1,\ell,Q}(t)d\mu(t)\zeta(x)=0.$$

For $i=2$,   by \eqref{S-alpha-H},
\begin{equation}\label{i=2-j=k+1}
\zeta(x)\widetilde{S}_{\alpha_kM_k}^2(b_{\mathrm{d}}^{j+1,\ell,Q})(x)\zeta(x)=\zeta(x)\frac{r^{-\alpha_k}(x)}{2|I_k(x)|} \int_{Q\cap I_k(x) \cap \{x_k \neq t_k\}} b_{\mathrm{d}}^{j+1,\ell,Q}(t) r_k^{\alpha_k}(t)d\mu(t)\zeta(x).
\end{equation} 
It follows from \eqref{QIk} that the integration domain for this term is
\begin{equation}\label{Q-d}
	Q\cap I_k(x) \cap \{x_k \neq t_k\}=Q\setminus (Q\cap I_{k+1}(x)).
\end{equation}
The integration domain is empty when $Q\subset I_{k+1}(x)$. If $I_{k+1}(x)\cap Q=\emptyset$, the domain equals $Q$, and since $r_k^{\alpha_k}$ is constant on $Q$. Then \eqref{i=2-j=k+1} and Theorem \ref{NewCZ}\,(iv) imply
$$\zeta(x)\widetilde{S}_{\alpha_kM_k}^2(b_{\mathrm{d}}^{j+1,\ell,Q})(x)\zeta(x)=\zeta(x)\frac{cr^{-\alpha_k}(x)}{2|I_k(x)|} \int_{Q} b_{\mathrm{d}}^{j+1,\ell,Q}(t)d\mu(t)\zeta(x)=0.$$

The same argument applies to $i=3$. Since the underlying integration domain coincides with that for $i=2$.

We next consider $i=4$ and $i=5$. Both
$r_k^{\alpha_k}(t)$ and $\cot\!\bigl(\frac{\pi(x_k-t_k)}{m_k}\bigr)$ are constant on $t\in Q\in F_{j+1,\ell}$. Arguing exactly as above and using the cancellation $\int_Q b_{\mathrm d}^{j+1,\ell,Q}(t)\,d\mu(t)=0$ from Theorem  \ref{NewCZ} (iv),
we obtain
\[
\zeta(x)H_k\!\bigl(b_{\mathrm d}^{j+1,\ell,Q}r_k^{\alpha_k}\bigr)(x)\zeta(x)=0,
\qquad
\zeta(x)H_k\!\bigl(b_{\mathrm d}^{j+1,\ell,Q}\bigr)(x)\zeta(x)=0.
\]
Hence,
$$ \zeta(x)\widetilde{S}_{\alpha_kM_k}^i(b_{\mathrm{d}}^{j+1,\ell,Q})(x)\zeta(x),\quad i\in\{4,5\}.$$

	\smallskip
\noindent\textbf{Subcase 2.3: \(j=k\).} Since $Q\in F_{k+1,\ell}$, and the same geometric argument gives \begin{equation}\label{Q-I-k+1}
I_{k+1}(x)\cap Q=\emptyset.
\end{equation}

For $i=1$,  by \eqref{i=1-j=k+1}, the integration domain is empty, thus we have
$$\zeta(x)\widetilde{S}_{\alpha_kM_k}^1(b_{\mathrm{d}}^{j+1,\ell,Q})(x)\zeta(x)=0.$$
	
 For $i=2$, by \eqref{QIk}, \eqref{Q-d} and \eqref{Q-I-k+1}, we have
$$Q\cap I_k(x) \cap \{x_k \neq t_k\}=Q.$$
 By \eqref{i=2-j=k+1} and the cancellation property  $\int_Q b_{\mathrm d}^{k+1,\ell,Q}(x)\,r_k^{\alpha_k}(x)\,d\mu(x)=0$  from Theorem~\ref{NewCZ}(iv), we obtain 
$$ \zeta(x)\widetilde{S}_{\alpha_kM_k}^2(b_{\mathrm{d}}^{j+1,\ell,Q})(x)\zeta(x)
 =\zeta(x)\frac{r^{-\alpha_k}(x)}{2|I_k(x)|} \int_{Q} b_{\mathrm{d}}^{j+1,\ell,Q}(t) r_k^{\alpha_k}(t)d\mu(t)\zeta(x)=0.$$

 The same argument applies to \(i=3\) since the underlying integration domain coincides with that for $i=2$, now using the cancellation $ \int_Q b_{\mathrm d}^{k+1,\ell,Q}(t)\,d\mu(t)=0$  from Theorem~\ref{NewCZ} (iv). We now conclude the proof for the claim \eqref{claim}. Thus, combining  \eqref{bd-S5},  \eqref{bd-Si} and the claim \eqref{claim}, we arrive at
\[\zeta \widetilde S_n(b_{\mathrm d})\zeta=A-B,\]
where
$$A(x)=\zeta(x)\sum_{k\geq0}\sum_{\ell=1}^{\lceil\log_2 m_k\rceil}\sum_{Q\in F_{k+1,\ell}}i r_k^{-\alpha_k}(x) H_k(b_{\mathrm{d}}^{k+1,\ell,Q}r_k^{\alpha_k})(x) \zeta(x)$$
and
	$$B(x)=\zeta(x)\sum_{k\geq0}\sum_{\ell=1}^{\lceil\log_2 m_k\rceil}\sum_{Q\in F_{k+1,\ell}}i H_k (b_{\mathrm{d}}^{k+1,Q})(x)\zeta(x).$$
Hence, by \cite[Lemma 3.6]{SS2018},  we heve
\begin{equation}\label{bd-H-2}
\varphi(\chi_{(\lambda,\infty)}(|\zeta\widetilde{S}_n(b_{\mathrm{d}})\zeta|))\leq\varphi(\chi_{(\frac{\lambda}{2},\infty)}(|A|)+\varphi(\chi_{(\frac{\lambda}{2},\infty)}(|B|)).
\end{equation}

We now estimate the first term on the right-hand side.  Fix a point $t^Q\in Q$,
applying the cancellation $\int_{Q} b_{\mathrm{d}}^{k+1,\ell,Q}(t)r_k^{\alpha_k}(t)d\mu(t)=0$ from Theorem~\ref{NewCZ}\,(iv) and the definition of $H_k$ in \eqref{Hk}, we may rewrite $A$ as
$$
A(x)=\zeta(x)\sum_{k\geq0}\sum_{\ell=1}^{\lceil\log_2 m_k\rceil}\sum_{\substack{Q\in F_{k+1,\ell}\\ Q\subset I_k(x)}} \frac{M_k i r_k^{-\alpha_k}(x)}{2}
\int_{Q}b_{\mathrm{d}}^{k+1,\ell,Q}(t)\,r_k^{\alpha_k}(t)\times\Delta_{Q}(x,t,t^Q)\,d\mu(t)\,\zeta(x),$$
where the definition of $\Delta_{Q}(x,t,t^Q)$  is given in \eqref{DQ},  and following the discussion in Subcase~2.3, the integration domain is $Q\cap I_k(x)\cap \{x_k\neq t_k\}=Q$. Thus,
\begin{align*}
	\|A\|_{L_1(\N)}&\leq\frac{M_k}{2}\sum_{k\geq0}\sum_{\ell=1}^{\lceil\log_2 m_k\rceil}\sum_{I\in F_k'} \int_{(3Q)^c\cap I}\sum_{\substack{Q\in F_{k+1,\ell}\\ Q\subset I_k(x)}} \|\int_{Q}b_{\mathrm{d}}^{k+1,\ell,Q}(t)r_k^{\alpha_k}(t)\times\Delta_{Q}(x,t,t^Q)\,d\mu(t)\|_{L_1(\mathcal{M})}d\mu(x)\\
	&=\frac{M_k}{2}\sum_{k\geq0}\sum_{\ell=1}^{\lceil\log_2 m_k\rceil}\sum_{I\in F_k}\sum_{\substack{Q\in F_{k+1,\ell}\\ Q\subset I}} \int_{(3Q)^c\cap I} \|\int_{Q}b_{\mathrm{d}}^{k+1,\ell,Q}(t)r_k^{\alpha_k}(t)\times\Delta_{Q}(x,t,t^Q)\,d\mu(t)\|_{L_1(\mathcal{M})}d\mu(x).
	\end{align*}
For each $I\in F_k'$, $Q\in F_{k+1,\ell}$ and $Q\subset I$,  by triangle inequality, Fubini's theorem and  Lemma \ref{cot} (i),
	\begin{align*}
	&M_k\int_{(3Q)^c\cap I}\Big\|\int_{Q}|b_{\mathrm{d}}^{k+1,\ell,Q}(t)|\cdot|\Delta_{Q}(x,t,t^Q)|d\mu(t)\Big\|_{L_1(\mathcal{M})}d\mu(x)\\
	&\leq\int_{Q}\|b_{\mathrm{d}}^{k+1,\ell,Q}(t)\|_{L_1(\mathcal{M})}M_k\int_{(3Q)^c\cap I} \Big|\Delta_{Q}(x,t,t^Q)\Big|d\mu(x)d\mu(t)\\
	&\leq c \|b_{\mathrm{d}}^{k+1,\ell,Q}(t)\|_{L_1(\mathcal{N})}.
\end{align*}
Consequently, Theorem~\ref{NewCZ}\,(iv) yields
$$
	\|A\|_{L_1(\N)}\leq c\sum_{k\geq0}\sum_{\ell=1}^{\lceil\log_2 m_k\rceil}\sum_{Q\in F_{k+1,\ell}}\|b_{\mathrm{d}}^{k+1,\ell,Q}\|_{L_1(\mathcal{N})}\leq c\|f\|_{L_1(\mathcal{N})}.
$$
Therefore, by Chebyshev's inequality,
\[
\varphi\Bigl(\chi_{(\lambda/2,\infty)}(|A|)\Bigr)
\leq \frac{c\|f\|_{L_1(\mathcal{N})}}{\lambda}.
\]
The term $B$ is handled in the same way, using the cancellation $\int_Q b_{\mathrm d}^{k+1,\ell,Q}(t)\,d\mu(t)=0$  instead of  the cancellation $\int_{Q} b_{\mathrm{d}}^{k+1,\ell,Q}(t)r_k^{\alpha_k}(t)d\mu(t)=0$, which gives
\[
\varphi\Bigl(\chi_{(\lambda/2,\infty)}\Bigl(|B|\Bigr)
\leq \frac{c\|f\|_{L_1(\mathcal{N})}}{\lambda}.
\]
The proof is completed by combining the last two estimates with  \eqref{bd-H-2}.
\end{proof}

\begin{lemma}\label{boff-w11} There exists an absolute constant $c>0$ such that
	$$\varphi(\chi_{(\lambda,\infty)}(|\zeta\widetilde{S}_n(b_{\mathrm{off}})\zeta|))\leq\frac{c\|f\|_{L_1(\mathcal{N})}}{\lambda},\quad \forall n\geq1,$$
	{where \(\zeta \) is the projection defined in \eqref{zeta}.}
\end{lemma}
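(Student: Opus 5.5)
The plan is to follow the proof of Lemma \ref{bd-w11}, with two new points: $b_{\mathrm{off}}^{k+1,\ell,Q}$ is no longer supported on both sides by $p_Q$, and it has no $L_1$ bound in Theorem \ref{NewCZ}. Fix $n=\sum_k\alpha_kM_k$ and expand $\zeta\widetilde S_n(b_{\mathrm{off}})\zeta$ through \eqref{Sn-S-alpha-k} and \eqref{S-alpha-H} into the pieces $\zeta\widetilde S^i_{\alpha_kM_k}(b_{\mathrm{off}}^{j+1,\ell,Q})\zeta$, and then rerun the case analysis.

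\textbf{Case $x\in 3Q$.} Here $\zeta(x)\le p_Q^\perp$. Write $b_{\mathrm{off}}^{j+1,\ell,Q}=p_Q(\cdot)q_Q\chi_Q+q_Q(\cdot)p_Q\chi_Q$. Then $\zeta(x)\,\widetilde S^i(p_Q\,\cdot)\,\zeta(x)=0$, because the projection $p_Q$ is constant in $t$ and can be pulled out of every $\widetilde S^i_{\alpha_kM_k}$. The same argument works for the right factor. So only $x\in(3Q)^c$ matters.

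\textbf{Case $x\in(3Q)^c$.} The geometric subcases $j<k$, $j\ge k+1$ and $j=k$ go through exactly as before. The only inputs are the cancellations \eqref{boff0} and the constancy of $r_k^{\alpha_k}$ and $\cot(\pi(x_k-t_k)/m_k)$ on finer atoms. As before, the only surviving terms are the $H_k$ terms with $j=k$, $Q\subset I_k(x)$. After subtracting the value at a fixed $t^Q\in Q$, this gives $\zeta\widetilde S_n(b_{\mathrm{off}})\zeta=A'-B'$. Here $A'$ and $B'$ have the same form as $A$ and $B$, with $b_{\mathrm d}$ replaced by $b_{\mathrm{off}}$ and kernel $\Delta_Q(x,t,t^Q)$. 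Lemma 3.6 of \cite{SS2018} then reduces the problem to bounding $\varphi(\chi_{(\lambda/2,\infty)}(|A'|))$, and $B'$ is treated in the same way.

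\textbf{Main obstacle: no $L_1$ control of $b_{\mathrm{off}}$.} Write $b_{\mathrm{off}}=(f-A_Q(f))$-part minus $A_Q(f)$-part, that is, $p_Q f q_Q+q_Qfp_Q$ minus $g_{\mathrm{off}}$-type pieces.

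\emph{The $A_Q(f)$-pieces.} Their coefficients form exactly $g_{\mathrm{off}}$, which has the $L_2$ bound \eqref{goff-2-lambda f}. I would estimate the corresponding part of $A'$ in $L_2(\mathcal N)$ and then use Chebyshev at level $\lambda^2$. I expect Lemma \ref{cot}(ii), together with a Schur-type summation over $a=\min\{|j-t_k|,|j-t_k^Q|\}$, to give $\sum_J(\int_J|\Delta_Q|^2)^{1/2}$-type bounds. Summed over $J$, this should give $L_2\to L_2$ boundedness of the truncated operator $h\mapsto \sum_Q\int_Q h(t)\Delta_Q(x,t,t^Q)\,d\mu(t)\,\chi_{(3Q)^c}$ with a uniform constant. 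Then $\|A'_{g}\|_2^2\le c\|g_{\mathrm{off}}\|_2^2\le c\lambda\|f\|_1$, which gives the bound $c\|f\|_1/\lambda$.

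\emph{The $p_Qfq_Q$-pieces.} For $p_Qfq_Q\chi_Q$ I would use Lemma \ref{AcB} as in Step 4: $\int_Q p_Qf(t)q_Q\,\Delta_Q(x,t,t^Q)\,d\mu(t)$ factors as $\big(\int_Q p_Qf p_Q|\Delta_Q|^2\big)^{1/2}u\big(\int_Qq_Qfq_Q\big)^{1/2}$, and the second factor is at most $(|Q|\lambda)^{1/2}$ by Lemma \ref{Cuculescu}(iii). Its $L_1(\mathcal N)$ norm on $I\cap(3Q)^c$ is then at most $\lambda^{1/2}\varphi(p_Q\chi_Q)^{1/2}\varphi(p_Qfp_Q\chi_Q)^{1/2}$, up to the integral of $|\Delta_Q|$. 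To control that integral, apply Hölder in $x$ over the atoms $J$ and use the $L_2$ estimate Lemma \ref{cot}(ii); the factor $m_k^{1/2}$ there cancels against $|J|^{1/2}$, which is the delicate bookkeeping step. Cauchy--Schwarz over $Q$ as in \eqref{goff-1-sum}, together with $\lambda\varphi(1-q)\le\|f\|_1$, then bounds the $L_1$ norm by $c\|f\|_1$, and Chebyshev finishes.

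I expect the main difficulty to be making this $L_2$/Hölder bookkeeping uniform in $m_k$.
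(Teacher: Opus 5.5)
Most of your proposal coincides with the paper's proof:
\begin{itemize}
\item the vanishing on $3Q$ via $\zeta(x)\le p_Q^\perp$;
\item the subcase analysis that leaves only the $H_k$ terms with $j=k$, rewritten with $\Delta_Q(x,t,t^Q)$;
\item your treatment of the $p_Qfq_Q$ and $q_Qfp_Q$ pieces via Lemma \ref{AcB}, H\"older over the atoms $J$ with Lemma \ref{cot}(ii), and Cauchy--Schwarz over $Q$. This is the paper's bound for $A_1^{k,\ell,I,Q}$.
\end{itemize}

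The gap is in the $A_Q(f)$-pieces: the step $\|A'_g\|_2^2\le c\|g_{\mathrm{off}}\|_2^2$ is not justified, and the operator bound it relies on is not available uniformly in $m_k$.

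\emph{$A'_g$ is not one operator applied to $g_{\mathrm{off}}$.} Each $g_Q=p_QA_Q(f)q_Q\chi_Q$ is integrated against its own kernel; call $T_Q$ the operator with kernel $\frac{M_k}{2}\Delta_Q(x,t,t^Q)\chi_{I\cap(3Q)^c}(x)$. The $g_Q$ also overlap in space. So what you actually need is $\|\sum_QT_Qg_Q\|_2^2\le c\sum_Q\|g_Q\|_2^2$. This is the form in which \eqref{goff-2-lambda f} is proved, using the orthogonality of the projections $p_Q\chi_Q$.

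\emph{Nothing lets you sum over $Q$.} Lemma \ref{cot}(ii) does give $\|T_Qg_Q\|_2\le c\|g_Q\|_2$ for each single $Q$. But the outputs live on the overlapping sets $I\cap(3Q)^c$, and for atoms that are not nested nothing forces $p_Qp_{Q'}=0$. A Schur-type argument built on the size bounds of Lemma \ref{cot} only sees the scalar model with positive kernels $\frac{M_k}{2}|\Delta_Q(x,t,t^Q)|\chi_{I\cap(3Q)^c}(x)$, and there the uniform bound fails:
\begin{itemize}
\item the adjoint applied to $\chi_I$ equals $\frac{M_k}{2}\int_{I\cap(3Q)^c}|\Delta_Q(x,t,t^Q)|\,d\mu(x)$ on $Q$;
\item this is $\gtrsim1$ wherever $|t_k-t_k^Q|\gtrsim|Q|M_{k+1}$, which covers a fixed proportion of $Q$;
\item for each of the roughly $\log_2m_k$ levels $\ell$ with $2\le|Q|M_{k+1}\le m_k/16$, the atoms $Q\subset I$ partition $I$;
\item hence $\sum_Q\|T_Q^*\chi_I\|_{L_2(Q)}^2\gtrsim(\log m_k)\,|I|$, i.e.\ the norm is at least of order $(\log m_k)^{1/2}$.
\end{itemize}
This is exactly the non-uniformity in $m_k$ you anticipated, and it is a real obstruction rather than bookkeeping.

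\emph{The fix.} No $L_2$ argument is needed, because, unlike $b_{\mathrm{off}}$, the $A_Q(f)$-pieces are $L_1$-summable.

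Direct route: Step 4 of the proof of Theorem \ref{NewCZ} establishes \eqref{goff-1-sum}, $\sum_Q\|p_QA_Q(f)q_Q\chi_Q\|_{L_1(\mathcal N)}\le c\|f\|_{L_1(\mathcal N)}$, and symmetrically for $q_QA_Q(f)p_Q\chi_Q$. Move the $L_1(\mathcal M)$-norm inside the $t$-integral and use the second assertion of Lemma \ref{cot}(i), $M_k\int_{I\cap(3Q)^c}|\Delta_Q(x,t,t^Q)|\,d\mu(x)\le c$. This bounds the corresponding parts of $A'$ and $B'$ by $c\|f\|_{L_1(\mathcal N)}$ in $L_1(\mathcal N)$, and Chebyshev finishes.

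Paper's route: rerun your own factorization argument with $f$ replaced by pieces of $A_Q(f)$.
\begin{itemize}
\item Split the kernel $K_{A_Q}$ into $(\mathrm{Re}\,K_{A_Q})^{\pm}$ and $(\mathrm{Im}\,K_{A_Q})^{\pm}$. This writes $A_Q(f)$ as a combination of four positive operator-valued functions $h$, each with $h\le cf_Q$ by Lemma \ref{KAQ}.
\item Then $\int_Qq_Qhq_Q\,d\mu\le c|Q|\lambda q_Q$ and $\int_Q\|p_Qhp_Q\|_{L_1(\mathcal M)}\,d\mu\le c\,\varphi(p_Qfp_Q\chi_Q)$.
\end{itemize}
These two bounds are all that argument uses.
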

\begin{proof}  Fix $
	n=\sum_{k=0}^\infty \alpha_kM_k$ with $0\le \alpha_k<m_k$. Without loss of generality, assume that $f\in L_1^+(\mathcal{N})$. 
	Using the cancellation provided in Theorem \ref{NewCZ} (v),  and following the argument at the beginning of Lemma \ref{bd-w11} and using claim \eqref{claim}, we obtain
	\begin{equation}\label{boff-reduce-H}	
		\zeta\widetilde{S}_n(b_{\mathrm{off}})\zeta=A-B,
	\end{equation}
where for $x\in(3Q)^c$,
$$A(x)=\zeta(x)\sum_{k\geq0}\sum_{\ell=1}^{\lceil\log_2 m_k\rceil}\sum_{Q\in F_{k+1,\ell}}i r_k^{-\alpha_k}(x) H_k(b_{\mathrm{off}}^{k+1,\ell,Q}r_k^{\alpha_k})(x) \zeta(x)$$
and
$$B(x)=\zeta(x)\sum_{k\geq0}\sum_{\ell=1}^{\lceil\log_2 m_k\rceil}\sum_{Q\in F_{k+1,\ell}}i H_k (b_{\mathrm{off}}^{k+1,Q})(x)\zeta(x).$$
Consequently, by \cite[Lemma 3.6]{SS2018} and  Chebyshev's inequality, 
\begin{equation}\label{boff-split}
	\begin{aligned}
	\varphi\Bigl(\chi_{(\lambda,\infty)}\bigl(|\zeta\,\widetilde{S}_n(b_{\mathrm{off}})\,\zeta|\bigr)\Bigr)
	&\le \varphi\Bigl(\chi_{(\lambda/2,\infty)}\Bigl(|A|\Bigr)+	\varphi\Bigl(\chi_{(\lambda/2,\infty)}\Bigl(|B|\Bigr)\\
	&\leq c\frac{\|A\|_{L_1(\mathcal N)}+\|B\|_{L_1(\mathcal N)}}{\lambda}.
	\end{aligned}
\end{equation}
Thus it suffices to prove
\begin{equation}\label{AB1}
\|A\|_{L_1(\mathcal N)}+\|B\|_{L_1(\mathcal N)}
\leq c
\|f\|_{L_1(\mathcal N)}.
\end{equation}
We now estimate $A$, the term $B$ is handled similarly.  Fix a point $t^Q\in Q$, 
as in Lemma~\ref{bd-w11}, we have
\[Q\cap I_k(x)\cap \{x_k\neq t_k\}=Q.
\]
By the cancellation $
\int_Q b_{\mathrm{off}}^{k+1,\ell,Q}(t)\,r_k^{\alpha_k}(t)\,d\mu(t)=0$ from Theorem~\ref{NewCZ}\,(v) and the definition of $H_k$ in \eqref{Hk}, we may rewrite
\begin{equation}\label{boff-Hk-diff}
	A(x)=\zeta(x)\sum_{k\geq0}\sum_{\ell=1}^{\lceil\log_2 m_k\rceil}\sum_{\substack{Q\in F_{k+1,\ell}\\ Q\subset I_k(x)}} \frac{M_k i r_k^{-\alpha_k}(x)}{2}
	\int_{Q}b_{\mathrm{off}}^{k+1,\ell,Q}(t)\,r_k^{\alpha_k}(t)\times\Delta_{Q}(x,t,t^Q)\,d\mu(t)\,\zeta(x),
\end{equation}
where we refer to \eqref{DQ} for the definition of $\Delta_{Q}(x,t,t^Q)$.
Using symmetry and the triangle inequality, we obtain
\begin{equation}\label{A-l1}
\begin{aligned}
	\|A\|_{L_1(\mathcal{N})}
	&\le \frac{M_k}{2}\sum_{k\geq0}\sum_{\ell=1}^{\lceil\log_2 m_k\rceil}\sum_{I\in F_k}\int_{(3Q)^c\cap I}\|\sum_{\substack{Q\in F_{k+1,\ell}\\ Q\subset I_k(x)}}\int_{Q}b_{\mathrm{off}}^{k+1,\ell,Q}(t)\,r_k^{\alpha_k}(t)\times\Delta_{Q}(x,t,t^Q)\,d\mu(t)\|_{L_1(\mathcal{M})}d\mu(x)\\
	&\le \sum_{k\geq0}\sum_{\ell=1}^{\lceil\log_2 m_k\rceil}\sum_{I\in F_k'}\sum_{\substack{Q\in F_{k+1,\ell}\\ Q\subset I}}A_{1}^{k,\ell,I,Q}+\sum_{k\geq0}\sum_{\ell=1}^{\lceil\log_2 m_k\rceil}\sum_{I\in F_k'}\sum_{\substack{Q\in F_{k+1,\ell}\\ Q\subset I}}A_2^{k,\ell,I,Q},
\end{aligned}
\end{equation}
where 
$$A_1^{k,\ell,I,Q}=M_k\int_{(3Q)^c\cap I}\|\int_{Q}p_Qf(t)q_Q\,r_k^{\alpha_k}(t)\times\Delta_{Q}(x,t,t^Q)\,d\mu(t)\|_{L_1(\mathcal{M})}d\mu(x)$$
and 
$$A_2^{k,\ell,I,Q}=M_k\int_{(3Q)^c\cap I}\|\int_{Q}p_QA_Q(f)(t)q_Q\,r_k^{\alpha_k}(t)\times\Delta_{Q}(x,t,t^Q)\,d\mu(t)\|_{L_1(\mathcal{M})}d\mu(x).$$

The two estimates  are similar, so we first deal with the common argument. For each $I\in F_k'$, $Q\in F_{k+1,\ell}$ and $Q\subset I$, let \(h\) be a positive \(\mathcal M\)-valued function on \(Q\), and define
	\[
\mathcal T_h^{k,\ell,I,Q}
:=
M_k\int_{(3Q)^c\cap I}
\Big\|
\int_Q p_Qh(t)q_Q\,r_k^{\alpha_k}(t)\,\Delta_Q(x,t,t^Q)\,d\mu(t)
\Big\|_{L_1(\mathcal M)}\,d\mu(x).
\]
By Lemma~\ref{AcB}, there exists a contraction \(u_{h,Q}\in \mathcal M\) such that
\[
\int_Q p_Qh(t)q_Q\,r_k^{\alpha_k}(t)\,\Delta_Q(x,t,t^Q)\,d\mu(t)
=B_{1,h}(x)^{1/2}\,u_{h,Q}(x)\,B_{2,h}^{1/2}.
\]
where
$$B_{1,h}(x)=\int_Q p_Q h(t)p_Q |\Delta_{Q}(x,t,t^Q)|^2\,d\mu(t)\quad
\mbox{and}\quad B_{2,h}=\int_Q q_Q h(t)q_Q\,d\mu(t)$$
Hence
\[
\mathcal T_h^{k,\ell,I,Q}
\le
M_k\|B_{2,h}^{1/2}\|_{L_\infty(\mathcal M)}
\int_{(3Q)^c\cap I}\|B_{1,h}(x)^{1/2}\|_{L_1(\mathcal M)}\,d\mu(x).
\]
Decompose $(3Q)^c\cap I$ into level-$(k+1)$ atoms $J\subset I\cap(3Q)^c$, where $|J|=M_{k+1}^{-1}$.
For each such $J$,  H\"older's inequality and Fubini's theorem give
\begin{align*}
	\int_{J}\bigl\|B_{1,h}(x)^\frac{1}{2}\bigr\|_{L_1(\mathcal{M})}\,d\mu(x)&\leq\|p_Q\|_{L_1(\mathcal{M})}^{\frac 12}\int_J\big\|B_{1,h}(x)\big\|_{L_1(\mathcal{M})}^\frac{1}{2} d\mu(x)\\
	&\leq\|p_Q\|_{L_1(\mathcal{M})}^{\frac 12} |J|^{\frac{1}{2}} \Big(\int_{Q}\Big\|p_Qh(t)p_Q\Big\|_{L_1(\mathcal{M})}\int_{J}\Big|\Delta_{Q}(x,t,t^Q)\Big|^2d\mu(x)d\mu(t)\Big)^\frac{1}{2}.
\end{align*}
Applying Lemma~\ref{cot} (ii), summing over all such atoms $J$ and  following the argument for the second term estimate in Lemma \ref{cot} (i), we obtain
\begin{equation}\label{common-estimate}
	\begin{aligned}
		\mathcal T_h^{k,\ell,I,Q}
	&\le cM_kM_{k+1}^{-\frac 12}\frac{c\,m_k^{1/2}}{M_k^{1/2}}\sum_{a=d}^{m_k}
	\frac{d}{a(a+d)}\|B_{2,h}^{1/2}\|_{L_\infty(\mathcal M)}\|p_Q\|_{L_1(\mathcal{M})}^{\frac 12}\Big(
	\int_Q \|p_Qh(t)p_Q\|_{L_1(\mathcal M)}\,d\mu(t)
	\Big)^{1/2}\\
	&\leq c\,\|B_{2,h}^{1/2}\|_{L_\infty(\mathcal M)}\,
	\|p_Q\|_{L_1(\mathcal M)}^{1/2}
	\Big(
	\int_Q \|p_Qh(t)p_Q\|_{L_1(\mathcal M)}\,d\mu(t)
	\Big)^{1/2}.
	\end{aligned}
\end{equation}

We now apply \eqref{common-estimate} to \(A_1^{k,\ell,I,Q}\) and
\(A_2^{k,\ell,I,Q}\).

\smallskip
\noindent\textbf{Estimate of \(A_1^{k,\ell,I,Q}\).}
Here \(h=f\). By Lemma~\ref{Cuculescu}(iii),
\[
\|B_{2,f}^{1/2}\|_{L_\infty(\mathcal M)}
=
\big\|
\Big(
\int_Q q_Qf(t)q_Q\,d\mu(t)
\Big)^{1/2}
\big\|_{L_\infty(\mathcal M)}
\le
(|Q|\lambda)^{1/2}.
\]
Also,
\[
\int_Q \|p_Qf(t)p_Q\|_{L_1(\mathcal M)}\,d\mu(t)
=
\varphi(p_Qfp_Q\,\chi_Q).
\]
Therefore, \eqref{common-estimate} yields
\begin{equation}\label{a1klq}
	A_1^{k,\ell,I,Q}
	\le
	c\,\lambda^{1/2}\,
	\varphi(p_Q\chi_Q)^{1/2}\,
	\varphi(p_Qfp_Q\,\chi_Q)^{1/2}.
\end{equation}

\smallskip
\noindent\textbf{Estimate of \(A_2^{k,\ell,I,Q}\).} The only difference here is that \(A_Q(f)\) is not positive. We therefore decompose \(A_Q(f)\) into real and imaginary parts, and then into positive and negative parts:
\begin{align*}
A_Q(f)&=\mathrm{Re}A_Q(f)+i\,\mathrm{Im} A_Q(f)\\
&=(\mathrm{Re} A_Q(f))^+-(\mathrm{Re} A_Q(f))^-+i(\mathrm{Im} A_Q(f))^+-i(\mathrm{Im} A_Q(f))^-=:\sum_{\nu=1}^4A_Q^\nu(f),
\end{align*}
by  Lemma~\ref{KAQ},
\begin{equation}\label{GammaQ-bd}
	0\le A_Q^\nu(f)(t)\le c\,f_Q,
	\qquad
	t\in Q,\quad \nu=1,2,3,4.
\end{equation}
Hence, by the triangle inequality,
\[
A_2^{k,\ell,I,Q}
\le
\sum_{\nu=1}^4 \mathcal T_{A_Q^\nu(f)}^{k,\ell,I,Q}.
\]
Applying \eqref{common-estimate} with \(h=A_Q^\nu(f)\), and using
\eqref{GammaQ-bd} together with Lemma~\ref{Cuculescu}(iii), we get
\[
\|B_{2,A_Q^\nu(f)}^{1/2}\|_{L_\infty(\mathcal M)}=\|(\int_Q q_Q A_Q^\nu(f)q_Q\,d\mu(t))^{1/2}\|_{L_\infty(\mathcal M)}
\le
c\,(|Q|\lambda)^{1/2},
\]
and
\[
\int_Q \|p_QA_Q^\nu(f)(t)p_Q\|_{L_1(\mathcal M)}\,d\mu(t)
\le
c\,\varphi(p_Qfp_Q\,\chi_Q).
\]
Therefore,
\[
\mathcal T_{A_Q^\nu(f)}^{k,\ell,I,Q}
\le
c\,\lambda^{1/2}\,
\varphi(p_Q\chi_Q)^{1/2}\,
\varphi(p_Qfp_Q\,\chi_Q)^{1/2},
\]
and summing over \(\nu\) yields
\begin{equation}\label{a2klq}
	A_2^{k,\ell,I,Q}
	\le
	c\,\lambda^{1/2}\,
	\varphi(p_Q\chi_Q)^{1/2}\,
	\varphi(p_Qfp_Q\,\chi_Q)^{1/2}.
\end{equation}
Substituting \eqref{a1klq} and \eqref{a2klq} into \eqref{A-l1}, and then
applying Cauchy-Schwarz inequality, we obtain
\begin{align*}
	\|A\|_{L_1(\mathcal N)}
	&\le
	c\sum_{k\ge 0}\sum_{\ell=1}^{\lceil\log_2 m_k\rceil}
	\sum_{I\in F_k'}\sum_{\substack{Q\in F_{k+1,\ell}\\ Q\subset I}}
	\lambda^{1/2}\,
	\varphi(p_Q\chi_Q)^{1/2}\,
	\varphi(p_Qfp_Q\,\chi_Q)^{1/2} \\
	&\le
	c\Biggl(
	\sum_{k\ge 0}\sum_{\ell=1}^{\lceil\log_2 m_k\rceil}\sum_{Q\in F_{k+1,\ell}}
	\lambda\,\varphi(p_Q\chi_Q)
	\Biggr)^{1/2}
	\Biggl(
	\sum_{k\ge 0}\sum_{\ell=1}^{\lceil\log_2 m_k\rceil}\sum_{Q\in F_{k+1,\ell}}
	\varphi(p_Qfp_Q\,\chi_Q)
	\Biggr)^{1/2} \\
	&\le
	c\,(\lambda\varphi(1-q))^{1/2}\,\bigl(\varphi((1-q)f)\bigr)^{1/2}
	\le
	c\,\|f\|_{L_1(\mathcal N)},
\end{align*}
where  the last inequality follows from Lemma~\ref{Cuculescu}(iv).

The term \(B\) is treated exactly in  the same way, using the cancellation$\int_{Q} b_{\mathrm{d}}^{k+1,\ell,Q}(t)d\mu(t)=0$ instead of
$\int_{Q} b_{\mathrm{d}}^{k+1,\ell,Q}(t)r_k^{\alpha_k}(t)d\mu(t)=0.$
Thus
\[
\|B\|_{L_1(\mathcal N)}\le c\,\|f\|_{L_1(\mathcal N)}.
\]
This proves \eqref{AB1}. The proof is complete.
\end{proof}

\begin{proof}[Proof of Proposition \ref{hatS}] 	It is enough to prove the uniform weak type \((1,1)\) estimate
\begin{equation}\label{w11-eq}
		\|\widetilde S_n f\|_{L_{1,\infty}(\mathcal N)}
	\le c\|f\|_{L_1(\mathcal N)},
	\qquad n\ge 1.
\end{equation}
	Indeed, once  \eqref{w11-eq} is established, Lemma \ref{lem:type22} together with the
Marcinkiewicz interpolation theorem \cite{Be1988} and a standard duality argument
imply that $\widetilde{S}_n$ is of strong type $(p,p)$ for all $1<p<\infty$.  

We now prove \eqref{w11-eq}. Without loss of generality, assume that $f\in L_1^+(\mathcal{N})$. For any $\lambda>0$, and  let  
	$$f=g_{\mathrm{d}}+g_{\mathrm{off}}+b_{\mathrm{d}}+b_{\mathrm{off}}$$
be the decomposition given by Theorem~\ref{NewCZ}. We write
\begin{align*}
\widetilde{S}_n(f)&=\widetilde{S}_n(g_{\mathrm{d}})+\widetilde{S}_n(g_{\mathrm{off}})+(1-\zeta)\widetilde{S}_n(b_{\mathrm{d}})+\zeta\widetilde{S}_n(b_{\mathrm{d}})(1-\zeta)+\zeta\widetilde{S}_n(b_{\mathrm{d}})\zeta\\
&\quad+(1-\zeta)\widetilde{S}_n(b_{\mathrm{off}})+\zeta\widetilde{S}_n(b_{\mathrm{off}})(1-\zeta)+\zeta\widetilde{S}_n(b_{\mathrm{off}})\zeta,
\end{align*}
where \(\zeta\) is  the projection defined in \eqref{zeta}.
By \cite[Lemma 3.6]{SS2018} and Theorem~\ref{NewCZ}\,(i), we have
\begin{align*}
	\varphi\bigl(\chi_{(8\lambda,\infty)}(|\widetilde{S}_n f|)\bigr)
	&\le \varphi\bigl(\chi_{(\lambda,\infty)}(|\widetilde{S}_n(g_{\mathrm{d}})|)\bigr)
	+\varphi\bigl(\chi_{(\lambda,\infty)}(|\widetilde{S}_n(g_{\mathrm{off}})|)\bigr)\\
	&\quad+\varphi\bigl(\chi_{(\lambda,\infty)}(|\zeta\widetilde{S}_n(b_{\mathrm{d}})\zeta|)\bigr)
	+\varphi\bigl(\chi_{(\lambda,\infty)}(|\zeta\widetilde{S}_n(b_{\mathrm{off}})\zeta|)\bigr)
	+\frac{c}{\lambda}\,\|f\|_{L_1(\mathcal{N})}.
\end{align*}
We first estimate the good diagonal term. By Chebyshev's inequality,
Lemma~\ref{lem:type22}, and Theorem~\ref{NewCZ} (ii),
\begin{align*}
\varphi\{|\widetilde{S}_n(g_{\mathrm{d}})|>\lambda\}&\leq \frac {1}{\lambda^2} \|\widetilde{S}_n(g_{\mathrm{d}})\|_{L_2(\mathcal{N})}^2\leq \frac {1}{\lambda^2} \|g_{\mathrm{d}}\|_{L_2(\mathcal{N})}^2\\
&\leq \frac{1}{\lambda^2} \|g_{\mathrm{d}}\|_{L_\infty(\mathcal{N})}\|g_{\mathrm{d}}\|_{L_1(\mathcal{N})}\leq  \frac{c}{\lambda}\|f\|_{L_1(\mathcal{N})}.
\end{align*}
For the good off-diagonal term, Chebyshev's inequality and Theorem~\ref{NewCZ} (iii) give directly
$$
	\varphi\{|\widetilde{S}_n(g_{\mathrm{off}})|>\lambda\}\leq \frac {1}{\lambda^2} \|\widetilde{S}_n(g_{\mathrm{off}})\|_{L_2(\mathcal{N})}^2\leq \frac {1}{\lambda^2} \|g_{\mathrm{off}}\|_{L_2(\mathcal{N})}^2\leq  \frac{c}{\lambda}\|f\|_{L_1(\mathcal{N})}.
$$
Lemmas~\ref{bd-w11} and~\ref{boff-w11}, together with the two estimates above, now yield \[ \varphi\bigl(\chi_{(8\lambda,\infty)}(|\widetilde S_n f|)\bigr) \le \frac{c}{\lambda}\|f\|_{L_1(\mathcal N)}. \] This is exactly \eqref{w11-eq}, and the proof is complete.
\end{proof}

\section{Proof of Theorem \ref{Npb}}\label{sec5}
 In this section, we introduce the noncommutative Vilenkin system of hyperfinite II$_1$ factor $\mathcal R$ and prove Theorem~\ref{Npb}. The construction of the Vilenkin system goes back to \cite[Section 4]{DFdePS2001},  \cite[Page 90]{DS2000} and \cite{AFS}.

Let $\mathcal R$ be the hyperfinite type II$_1$ factor (see \cite{AFS}), and let $(m_k)_{k\geq0}\subset \mathbb N$ satisfy $m_k\geq2$ for every $k\geq0$.
Denote by $\mathbb M_n$ the algebra of $n\times n$ complex valued matrices, equipped with its
normalized trace  ${\rm tr}_n$  satisfying ${\rm tr}_n(1_n)=1$, where $1_n$ is the $n\times n$ identity matrix. We identify $\mathcal R$ with the infinite tensor product
$$(\mathcal R,\tau)=\bigotimes_{k=0}^\infty (\mathbb M_{m_k}, {\rm tr}_{m_k}).$$
Note that such $\tau $ is a faithful normal trace on $\mathcal R$. The construction of infinite tensor products is detailed in \cite[Definition XIV.1.6]{Ta2003}. Consider the von Neumann subalgebras of $\mathcal R$ defined by setting
\begin{equation}\label{f-Rn}
	(\mathcal R_n,\tau_n)=\bigotimes_{k=0}^{n-1} (\mathbb M_{m_k}, {\rm tr}_{m_k}).
\end{equation}
Usually, we write $\mathcal R_0=\mathbb C$. In fact, we view $\mathcal R_n$ as a von Neumann subalgebra of $\mathcal R_{n+1}$ (resp. $\mathcal R$) via the inclusion
$$x\in \mathcal R_n\mapsto x \otimes 1_{m_{n}}\in \mathcal R_{n+1}\quad  \big(\mbox{resp.}\quad x \otimes \big(\bigotimes_{k=n}^\infty1_{m_k}\big)\in \mathcal R\big)  .$$
The conditional expectation $\mathcal E_n:\mathcal R\rightarrow \mathcal R_n$ is given by
$$\mathcal E_n =\big(\bigotimes_{k=0}^{n-1} 1_{m_k}\big)\otimes \big(\bigotimes_{k=n}^\infty {\rm tr}_{m_k}\big).$$

Given a  sequence $\mathbf m=(m_k)_{k\geq0}$, let $G_{\mathbf m}=\prod_{k=0}^{\infty}\mathbb Z_{m_k}$ be the associated Vilenkin group. Its dual group  $\widehat{G}_{m}$ is given by
$$\widehat{G}_{\mathbf{m}}=\coprod_{k=0}^{\infty} \widehat{\mathbb{Z}}_{m_k},$$
where each $\widehat{\mathbb Z}_{m_k}$ is canonically isomorphic to $\mathbb Z_{m_k}$.
For more properties of the Vilenkin group, refer to \cite{Golubov1991}

The following details the construction of the corresponding Vilenkin system in $\mathcal{R}$. For each integer $k \geq 0$, let $A_{k}$ and $B_{k}$ denote the matrices given by
$$
A_{k}=\sum_{j \in \mathbb{Z}_{m_k}} \exp\big(\frac{2 \pi \mathrm i j}{m_k}\big) e_{j, j}^{m_k}, \quad B_{k}=\sum_{j \in \mathbb{Z}_{m_k}} e_{j, j+1}^{m_k},
$$
where for each $n \geq 1, e_{p, q}^{n}$ is the $n \times n$  matrix defined such that $(p,q)$-th entry equal to $1$, and all other entries are zero, with entries indexed by $\mathbb{Z}_{n}^{2}$.
Within $\mathcal{R}$, the elements of the Vilenkin system are enumerated by doubled sequences in $\widehat{G}_{\mathbf{m}}$.  We shall identify the group $G_{\mathbf{m}}\times G_{\mathbf{m}}$ (respectively, $\widehat{G}_{\mathbf{m}}\times \widehat{G}_{\mathbf{m}}$) with the Vilenkin group $G_{\mathbf{2m}}$ (respectively, $\widehat{G}_{\mathbf{2m}}$) where
\begin{equation}\label{2m}
\mathbf{2m}:=\{(\mathbf{2m})_k\}_{k\geq0},
\end{equation}
where $(\mathbf{2m})_{2k}=(\mathbf{2m})_{2k+1}=m_k$ for each $k\geq0.$
It follows that any $\eta\in \widehat{G}_{\mathbf{2m}}=\coprod_{k=0}^{\infty} \widehat{\mathbb{Z}}_{(\mathbf{2m})_k}\cong \prod_{k=0}^{\infty}\mathbb Z_{(\mathbf{2m})_k}$ can be considered as a pair $(\eta',\eta'')$ where
$$\eta':=(\eta_{2k})_{k\geq0}\quad \mbox{and \quad $\eta'':=(\eta_{2k+1})_{k\geq0}$}$$
are elements of $\widehat{G}_{\mathbf{m}}$. For each $\eta \in \widehat{G}_{\mathbf{2m}}$, define
\begin{align}\label{non-V-S}
	V_{\eta}=\bigotimes_{k=0}^{\infty} A_{k}^{\eta_{2 k}} B_{k}^{\eta_{2 k+1}}.
\end{align}
The Vilenkin system associated to $G_{\mathbf{2m}}$ in $\mathcal{R}$ is then given by the set $\{V_{\eta}\}_{\eta \in \widehat{G}_{\mathbf{2m}}}$.  It is easily verified that every element of the Vilenkin system is a unitary operator.
For doubled sequences, let $G_{2 N}$ denote the subgroup of sequences of length $2 N$ in $G_{\mathbf{2m}}$, and $\widehat{G}_{2 N}$ denote the subgroup of sequences of length $2 N$ in $\widehat{G}_{\mathbf{2m}}$. It is then easily verified that for each $N \geq 1$, the restriction of the Vilenkin system to $\{V_{\eta}\}_{\eta \in \widehat{G}_{2 N}}$ forms a basis for $\mathcal{R}_{N}$ and $L_{2}(\mathcal{R}_{N})$.  Since the mapping 
$$k:\eta\in  \widehat{G}_{\mathbf{2m}}\mapsto k_{\eta}=\sum_{k=0}^\infty\eta_kM_{k}\in\mathbb{N}$$
is an order isomorphism of $\widehat{G}_{\mathbf{2m}}$  and $\mathbb{N}=\{0,1,2, \ldots\}$,
where $M_0=1$, $M_{1}=(\mathbf{2m})_0M_0$ and $M_{k+1}=(\mathbf{2m})_{k}M_{k}$ for each $k\geq1$.
Denote $V_n:=V_{k^{-1}(n)}$. Then the Vilenkin system $\{V_n\}_{0\leq n \leq M_{2N}-1}$ also forms an orthonormal basis for $\mathcal{R}_{N}$ and $L_{2}(\mathcal{R}_{N})$.  

The noncommutative Vilenkin Fourier coefficient of $f \in L_1(\mathcal{R})$ is defined by  $\hat f(0) = \tau (f)$
and
$$\widehat f(k)=\tau(fV^{\ast}_{k}), \quad k\geq1.$$
Similar to \eqref{ps}, define the partial sum  of $f\in L_1(\mathcal{R})$
\begin{equation}\label{nps}
	\mathcal{S}_{n}(f) := \sum_{k=0}^{n-1} \widehat {f}(k)V_{k},\quad n\geq1.
\end{equation}

The following lemma is taken from \cite[Lemma 2.1]{SS2018}. 
\begin{lemma}\label{RW-plus} For any $\ell,n\geq1$, there exist complex numbers   $u_n$ and $\omega_{\ell,n}$ such that $V_n^*=u_nV_{\bar{n}}$ and $V_\ell V_n=\omega_{\ell,n}V_{\ell\dot{+} n}$, where $|u_n|=1$, $|\omega_{\ell,n}|=1$ and $\ell\dot{+}n $ is referred to \eqref{addition}.
\end{lemma}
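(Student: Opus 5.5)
We verify the two identities directly from the tensor-product definition \eqref{non-V-S}, reducing everything to a single tensor factor. In each factor $\mathbb M_{m_k}$ the matrices $A_k$ and $B_k$ are unitary, satisfy $A_k^{m_k}=B_k^{m_k}=1_{m_k}$, and obey the commutation relation
\[
B_kA_k=\omega_k A_kB_k,\qquad \omega_k=\exp\big(\tfrac{2\pi\mathrm i}{m_k}\big)^{\pm1}.
\]
The sign of the exponent depends only on the convention $e_{j,j+1}$. This relation follows from $B_kA_k e_{j+1}=\exp(2\pi\mathrm i (j+1)/m_k)e_j$ compared with $A_kB_ke_{j+1}=\exp(2\pi\mathrm i j/m_k)e_j$. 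Iterating it gives $B_k^{b}A_k^{a}=\omega_k^{ab}A_k^aB_k^b$ for all integers $a,b$.

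\textbf{Product identity.} Write $\ell$ and $n$ in the mixed radix of $\mathbf{2m}$, with digits $(\ell_{2k},\ell_{2k+1})$ and $(n_{2k},n_{2k+1})$. Only finitely many digits are nonzero, so $V_\ell V_n$ is an elementary tensor with finitely many nontrivial factors, and we may compute factorwise:
\[
A_k^{\ell_{2k}}B_k^{\ell_{2k+1}}A_k^{n_{2k}}B_k^{n_{2k+1}}=\omega_k^{\ell_{2k+1}n_{2k}}A_k^{\ell_{2k}+n_{2k}}B_k^{\ell_{2k+1}+n_{2k+1}}.
\]
Since $A_k$ and $B_k$ have order $m_k$, the exponents may be reduced mod $m_k$, which gives exactly the digits of $\ell\dot+n$ from \eqref{addition} (recall $(\mathbf{2m})_{2k}=(\mathbf{2m})_{2k+1}=m_k$). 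Set $\omega_{\ell,n}=\prod_k\omega_k^{\ell_{2k+1}n_{2k}}$. This is a finite product of unimodular numbers, so $|\omega_{\ell,n}|=1$, and $V_\ell V_n=\omega_{\ell,n}V_{\ell\dot+n}$.

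\textbf{Adjoint identity.} Factorwise, $(A_k^aB_k^b)^*=B_k^{-b}A_k^{-a}$, and the commutation relation gives $B_k^{-b}A_k^{-a}=\omega_k^{ab}A_k^{-a}B_k^{-b}$. Reducing $-a,-b$ mod $m_k$ gives the digits $m_k-a$ and $m_k-b$ (read as $0$ when the digit is $0$), which are the digits of $\bar n$ in Lemma \ref{propetyVilenkin}. Hence $V_n^*=u_nV_{\bar n}$ with $u_n=\prod_k\omega_k^{n_{2k}n_{2k+1}}$, again a finite product of unimodular numbers, so $|u_n|=1$.

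The only delicate point is bookkeeping. First, the order isomorphism $k_\eta$ must match the interleaved digits $\eta_{2k},\eta_{2k+1}$ to the $k$-th tensor factor. Second, $\bar n$ must be read digitwise mod $(\mathbf{2m})_j$, so that a zero digit maps to zero. Once these conventions are fixed, the argument is the finite computation above, and the convergence of the infinite tensor product causes no issue because all but finitely many factors are $1_{m_k}$.
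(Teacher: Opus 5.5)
Your proof is correct. The paper gives no argument of its own for this lemma; it simply imports it from \cite[Lemma 2.1]{SS2018}. Your factorwise computation is the standard verification behind it: the relation $B_kA_k=e^{2\pi\mathrm i/m_k}A_kB_k$, the identities $A_k^{m_k}=B_k^{m_k}=1_{m_k}$, and factorwise multiplication in $\mathcal R_N$ give both identities, with the explicit unimodular constants you wrote down. You were also right to read $\bar n$ digitwise modulo $(\mathbf{2m})_j$, since the literal formula $\sum_k(m_k-n_k)M_k$ never terminates.
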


A polynomial  in $\mathcal{R}$ is a finite sum 
$$f=\sum_{k\geq0}\beta_kV_k, \quad \beta_k\in \mathbb{C},$$
that is $\beta_k=0$ for all but finite indices $k\geq0$. Let $\mathrm{Poly}(\Ra)$ be  the $*$-subalgebra of $\mathcal{R}$ consisting with all (complex) polynomials in $\mathcal{R}$. 
The von Neumann algebra  $L_{\infty}(0,1)\bar{\otimes} \mathcal{R}$ is equipped with the trace $\int\otimes \tau$ which is still denoted by $\varphi$.  Define the map $\gamma:\mathrm{Poly}(\Ra)\to L_{\infty}(0,1)\bar{\otimes} \mathcal{R}$ by 
\begin{equation}\label{transference mapping}
	\gamma(f)=\sum_{k\geq0}\widehat{f}(k)\phi_{k}\otimes V_{k}, \quad f\in \mathrm{Poly}(\Ra),
\end{equation}
where  $(\phi_k)_{k\geq0}$ are the classical Vilenkin functions with respect to the Vilenkin group $G_{\mathbf{2m}}$.

Since $\mathrm{Poly}(\Ra)$ is weak-$\ast$ dense in $ \Ra $, the map $\gamma:\mathrm{Poly}(\Ra)\to L_{\infty}(0,1)\bar{\otimes} \mathcal{R}$ is normal, we could now extend $\gamma$ in a natural way making $\gamma$ a $*$-homomorphism from $\mathcal{R}$ into $L_{\infty}(0,1)\bar{\otimes} \mathcal{R}$. For simplicity, we still use the notation $\gamma$ to stand for its natural extension if no confusion arises.
\begin{lemma}[{\cite[Proposition 3.2, Proposition 3.3]{JLZZ2025}}]\label{transference 1}
	The mapping $\gamma:\Ra\to L_{\infty}(0,1)\bar{\otimes}\mathcal{R}$ satisfies the following properties:
	\begin{enumerate}[{\rm (i)}]
		\item $\gamma$ is a $*$-homomorphism;
		\item $\gamma$ is trace preserving, that is, $\varphi(\gamma(f))=\tau(f)$, for all $f\in \mathrm{Poly}(\Ra)$;
		\item $\gamma$ is injective, and both $\gamma$ and $\gamma^{-1}$ are normal;
		\item $\gamma(\mathcal{R})$ is a von Neumann subalgebra of $L_{\infty}(0,1)\bar{\otimes} \mathcal{R}$.
	\end{enumerate}
\end{lemma}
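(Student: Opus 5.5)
We would prove the four assertions of Lemma~\ref{transference 1} directly from the multiplication rules of the two Vilenkin systems, first on $\mathrm{Poly}(\Ra)$ and then by a density and normality argument. The starting point is that $\gamma$ is defined on generators by $\gamma(V_k)=\phi_k\otimes V_k$, with $\phi_k$ the classical Vilenkin functions attached to $G_{\mathbf{2m}}$.

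\emph{Step 1: the $*$-homomorphism property on polynomials.} By Lemma~\ref{RW-plus} we have $V_\ell V_n=\omega_{\ell,n}V_{\ell\dot+n}$ and $V_n^*=u_nV_{\bar n}$. By Lemma~\ref{propetyVilenkin} we have $\phi_\ell\phi_n=\phi_{\ell\dot+n}$ and $\overline{\phi_n}=\phi_{\bar n}$. Therefore
\[
\gamma(V_\ell)\gamma(V_n)=\phi_{\ell\dot+n}\otimes\omega_{\ell,n}V_{\ell\dot+n}=\gamma(V_\ell V_n),
\qquad
\gamma(V_n)^*=\phi_{\bar n}\otimes u_nV_{\bar n}=\gamma(V_n^*).
\]
Since $\gamma$ is linear, it is a unital $*$-homomorphism on $\mathrm{Poly}(\Ra)$.

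\emph{Step 2: trace preservation.} We have $\int\phi_k=\delta_{k,0}$, and $V_0=1$. Hence $\varphi(\gamma(f))=\widehat f(0)=\tau(f)$ for every polynomial $f$. Combining this with Step 1 gives $\varphi(\gamma(f)^*\gamma(f))=\tau(f^*f)$, so $\gamma$ is an $L_2$-isometry on $\mathrm{Poly}(\Ra)$.

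\emph{Step 3: extension to all of $\Ra$.} This is the main obstacle. A trace-preserving $*$-homomorphism between finite von Neumann algebras with faithful normal traces extends uniquely to a normal injective $*$-homomorphism on the weak-$*$ closure. The concrete route is as follows.
\begin{enumerate}[{\rm (a)}]
\item Work in standard form: $\Ra$ acts on $L_2(\Ra,\tau)$ and $\mathcal N=L_\infty(0,1)\bar\otimes\Ra$ acts on $L_2(\mathcal N,\varphi)$.
\item The isometry of Step 2 extends to an isometry $W:L_2(\Ra)\to L_2(\mathcal N)$ satisfying $W(xy)=\gamma(x)W(y)$ for polynomials $x,y$.
\item Hence $\gamma(x)W=Wx$, so $x\mapsto\gamma(x)$ restricted to $\overline{\mathrm{ran}\,W}$ is unitarily equivalent to the standard representation.
\item Because $\|\gamma(x)\|_{L_2}=\|x\|_{L_2}$ and both traces are faithful, $\gamma$ extends by continuity in the strong operator topology on bounded sets. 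Kaplansky density supplies bounded approximating nets of polynomials, and the extension is again a $*$-homomorphism.
\item The extension preserves the trace, since the trace is $\sigma$-weakly continuous. Faithfulness of $\varphi$ then gives injectivity: if $\gamma(x)=0$, then $\tau(x^*x)=\varphi(\gamma(x^*x))=0$, so $x=0$.
\end{enumerate}

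\emph{Step 4: normality of $\gamma^{-1}$ and closedness of the range.} Normality of $\gamma$ follows because a trace-preserving $*$-homomorphism maps increasing bounded nets to increasing nets with matching traces. An injective normal $*$-homomorphism has weak-$*$ closed range, so $\gamma(\Ra)$ is a von Neumann subalgebra of $L_\infty(0,1)\bar\otimes\Ra$. The inverse $\gamma^{-1}$ is then a $*$-isomorphism between von Neumann algebras, and is automatically normal.

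The one delicate point is making sure the extension in Step 3 is unambiguous and normal. Appealing to the standard fact about trace-preserving maps, as in \cite{JLZZ2025}, settles it.
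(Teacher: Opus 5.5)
Your proposal is correct and follows the route the paper indicates: the paper only remarks that $\gamma$ is a $*$-homomorphism on the weak-$*$ dense $*$-algebra $\mathrm{Poly}(\Ra)$ that extends by density and normality, deferring details to \cite{JLZZ2025}, and your Steps 1--4 (multiplication rules, trace preservation, Kaplansky/$L_2$ extension, the finite-faithful-trace argument for normality, closed range and normality of $\gamma^{-1}$) supply exactly those details. The one point to state explicitly in Step 3(d) is that $\gamma$ is isometric in operator norm on $\mathrm{Poly}(\Ra)$, since the $L_2$-isometry alone does not guarantee that $\gamma$ maps Kaplansky's bounded approximants to a bounded net in $\mathcal N$; this follows from Steps 1--2 via $\|\gamma(x)\|_\infty^2=\lim_{n}\varphi\bigl((\gamma(x)^*\gamma(x))^n\bigr)^{1/n}=\lim_{n}\tau\bigl((x^*x)^n\bigr)^{1/n}=\|x\|_\infty^2$, or from $\mathrm{Poly}(\Ra)=\bigcup_N\Ra_N$ being a union of finite-dimensional $C^*$-algebras.
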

\begin{lemma}[{\cite[Theorem 3.4]{JLZZ2025}}]\label{transference2}
	For any $0<p\leq \infty$ and $ f\in\mathcal{R}$, we have $$\|\gamma(f)\|_{L_{p}(L_{\infty}(0,1)\bar{\otimes} \mathcal{R})}=\|f\|_{L_{p}(\mathcal{R})}.$$ Furthermore, for any $f\in \mathcal{R},$ one has the following stronger result
	$$\mu(t,\gamma (f))= \mu(t, f),\quad \forall t>0$$
	and
$$\|\gamma(f)\|_{L_{1,\infty}(L_{\infty}(0,1)\bar{\otimes} \mathcal{R})}=\|f\|_{L_{1,\infty}(\mathcal{R})}.$$
\end{lemma}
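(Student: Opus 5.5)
The plan is to derive everything from the structural properties of $\gamma$ in Lemma \ref{transference 1}: $\gamma$ is a normal, injective $*$-homomorphism with $\gamma(\mathbf 1)=\mathbf 1$. Unitality holds because $\widehat{\mathbf 1}(k)=\delta_{k,0}$, which gives $\gamma(\mathbf 1)=\phi_0\otimes V_0=\mathbf 1$. I will show that $\gamma$ preserves distribution functions: for every $f\in\mathcal R$ and $s>0$,
\[
\varphi\bigl(\chi_{(s,\infty)}(|\gamma(f)|)\bigr)=\tau\bigl(\chi_{(s,\infty)}(|f|)\bigr).
\]
By the definition of $\mu(t,\cdot)$ this gives $\mu(t,\gamma(f))=\mu(t,f)$ for all $t>0$. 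The $L_p$ identity for $0<p<\infty$ then follows from $\|x\|_p^p=\int_0^\infty\mu(t,x)^p\,dt$, and the weak $L_1$ identity follows from $\|x\|_{L_{1,\infty}}=\sup_t t\mu(t,x)$. For $p=\infty$, an injective $*$-homomorphism between $C^*$-algebras is isometric.

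The first step extends trace preservation from $\mathrm{Poly}(\mathcal R)$ to all of $\mathcal R$. Because $\gamma$ is unital, $\varphi(\gamma(\mathbf 1))=\varphi(\mathbf 1)=1$, where $\varphi=\int\otimes\tau$ is a normal finite trace on $L_\infty(0,1)\bar\otimes\mathcal R$. Hence $\varphi\circ\gamma$ is a normal positive linear functional on $\mathcal R$, and so is $\tau$. By Lemma \ref{transference 1}(ii) the two agree on the weak$^*$-dense subspace $\mathrm{Poly}(\mathcal R)$. Normal functionals are weak$^*$ continuous, so $\varphi\circ\gamma=\tau$ on $\mathcal R$.

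The second step shows that $\gamma$ commutes with Borel functional calculus on self-adjoint elements. Since $\gamma$ is a $*$-homomorphism, $\gamma(f^*f)=\gamma(f)^*\gamma(f)$. It also commutes with polynomial calculus, and by norm continuity with continuous functional calculus, so $\gamma(|f|)=|\gamma(f)|$. For the Borel set $(s,\infty)$, take an increasing sequence of continuous functions $0\le h_j\uparrow\chi_{(s,\infty)}$ on $[0,\|f\|]$. Then $h_j(|f|)\uparrow\chi_{(s,\infty)}(|f|)$ strongly, and normality of $\gamma$ gives
\[
\gamma(h_j(|f|))=h_j(|\gamma(f)|)\uparrow\gamma\bigl(\chi_{(s,\infty)}(|f|)\bigr).
\]
The left side also increases to $\chi_{(s,\infty)}(|\gamma(f)|)$, so the two projections coincide. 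Applying the first step to this projection yields the displayed identity of distribution functions.

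The main point requiring care is this normality and functional-calculus step, specifically that monotone strong limits are preserved by $\gamma$; normality in Lemma \ref{transference 1}(iii) is exactly what provides this. The remaining deductions are routine rearrangement-invariance identities.
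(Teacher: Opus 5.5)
Your argument is correct and complete. Unitality, the extension of $\varphi\circ\gamma=\tau$ from $\mathrm{Poly}(\mathcal R)$ by normality, and the monotone approximation $h_j\uparrow\chi_{(s,\infty)}$ together show that $\gamma$ sends the spectral projections of $|f|$ to those of $|\gamma(f)|$ with equal trace, which is all that is needed; the paper itself gives no proof here and simply quotes \cite[Theorem 3.4]{JLZZ2025}, and your route is the standard one for a normal, trace-preserving, injective $*$-homomorphism, resting exactly on the properties recorded in the preceding lemma.
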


Now we are ready to prove Theorem \ref{Npb} in detail. 
\begin{proof}[Proof of Theorem \ref{Npb}] 	Let $	\mathcal N=L_\infty(0,1)\bar\otimes \mathcal R$ and  $f\in L_{1}(\mathcal{R})$. Without loss of generality, we assume that $f$ is positive. Since $\gamma$ is a   $*$-homomorphism (see Proposition \ref{transference 1}), $\gamma(f)\geq0$.   It follows directly from
\eqref{nps}  and	\eqref{transference mapping} that
	\begin{align*}
		\gamma (\mathcal{S}_n(f))&=S_{n}(\gamma(f)),
	\end{align*}	
	where $S_{n}$ is as in \eqref{ps}. Then, by Lemma \ref{transference2}, we have
	\begin{align}\label{1-equality}
		\|\mathcal{S}_n(f)\|_{L_{1,\infty}(\mathcal{R})}= \|\gamma(\mathcal{S}_n(f))\|_{L_{1,\infty}(\mathcal{N})}=\|S_n(\gamma(f)\|_{L_{1,\infty}(\mathcal{N})}.
	\end{align}
	Combining Theorem \ref{weak-main} and Lemma  \ref{transference2}, we arrive at
$$	\Big\|(S_n(\gamma(f))\Big\|_{L_{1,\infty}(\mathcal{N})}
		\leq c\|\gamma(f)\|_{L_{1}(\mathcal{N})}=c\|f\|_{L_1(\mathcal{R})},
$$
 Then, combining \eqref{1-equality}, we get \eqref{nc-w11} of Theorem \ref{Npb}.
	
Similarly, by the  Marcinkiewicz interpolation theorem \cite{Be1988} and a duality argument, we can show the strong type $(p,p)$ inequality \eqref{nc-spp}, and the proof is complete. 
\end{proof}

\noindent \textbf{Acknowledgements}
The authors are very grateful to Fedor Sukochev for his helpful comments and suggestions on this paper. 

%
%
%
%
%
%
%
%

%


\providecommand{\bysame}{\leavevmode\hbox to3em{\hrulefill}\thinspace}
\providecommand{\MR}{\relax\ifhmode\unskip\space\fi MR }
\providecommand{\MRhref}[2]{%
	\href{http://www.ams.org/mathscinet-getitem?mr=#1}{#2}
}
\providecommand{\href}[2]{#2}

\end{document}